\theoremstyle{definition}
\newtheorem{thm}{Theorem}[section]
\newtheorem{dfn}[thm]{Definition}
\newtheorem{cor}[thm]{Corollary}
\newtheorem{prop}[thm]{Proposition}
\newtheorem{lem}[thm]{Lemma}
\newtheorem{rem}[thm]{Remark}
\newtheorem*{nota}{Notations}
\newtheorem*{claim}{Claim}
\newtheorem*{oftp}{Organization of this paper}
\newtheorem*{ack}{Acknowledgment}
\numberwithin{thm}{section}
\newcommand{\Zpn}{\mathbb{Z}_{>0}}
\newcommand{\Znn}{\mathbb{Z}_{\geq 0}}
\newcommand{\Z}{\mathbb{Z}}
\newcommand{\Zpt}{{\mathbb{Z}}_{p}^{\times}}
\newcommand{\Q}{\mathbb{Q}}
\newcommand{\Qp}{{\Q}_p}
\newcommand{\Qpt}{{\Q}_{p}^{\times}}
\newcommand{\Qpb}{\breve{\Q}_p}
\newcommand{\Ql}{{\mathbb{Q}}_{\ell}}
\newcommand{\R}{\mathbb{R}}
\newcommand{\C}{\mathbb{C}}
\newcommand{\F}{\mathbb{F}}
\newcommand{\A}{\mathbb{A}}
\newcommand{\G}{\mathbb{G}}
\newcommand{\bS}{\mathbb{S}}
\newcommand{\ebar}{\overline{e}}
\newcommand{\Fb}{\breve{F}}
\DeclareMathOperator{\Ker}{Ker}
\DeclareMathOperator{\Ima}{Im}
\DeclareMathOperator{\Hom}{Hom}
\DeclareMathOperator{\id}{id}
\DeclareMathOperator{\tr}{Tr}
\DeclareMathOperator{\ord}{ord}
\DeclareMathOperator{\Gal}{Gal}
\DeclareMathOperator{\GL}{GL}
\DeclareMathOperator{\N}{N}
\DeclareMathOperator{\sml}{sim}
\DeclareMathOperator{\Res}{Res}
\DeclareMathOperator{\diag}{diag}
\DeclareMathOperator{\rk}{rank}
\DeclareMathOperator{\Sh}{Sh}
\DeclareMathOperator{\val}{val}
\DeclareMathOperator{\sep}{sep}
\DeclareMathOperator{\der}{der}
\DeclareMathOperator{\ur}{ur}
\DeclareMathOperator{\Ind}{Ind}
\DeclareMathOperator{\spl}{spl}
\DeclareMathOperator{\Ram}{Ram}
\DeclareMathOperator{\Ad}{Ad}
\DeclareMathOperator{\tor}{tor}
\DeclareMathOperator{\In}{In}
\title[Connected components of CM unitary Shimura varieties]{On the connected components of Shimura varieties for CM unitary groups in odd variables}
\author[Y.Oki]{Yasuhiro Oki}
\address{Graduate School of Mathematical Sciences, 
the University of Tokyo, 3-8-1 Komaba, Meguro-ku, Tokyo 153-8914, Japan. }
\email{oki@ms.u-tokyo.ac.jp}
\begin{document}
\maketitle
\begin{abstract}
We study the prime-to-$p$ Hecke action on the projective limit of the sets of connected components of Shimura varieties with fixed parahoric or Bruhat--Tits level at $p$. In particular, we construct infinitely many Shimura varieties for CM unitary groups in odd variables for which the considering actions are not transitive. We prove this result by giving negative examples on the question of Bruhat--Colliot-Th{\'e}l{\`e}ne--Sansuc--Tits or its variant, which is related to the weak approximation on tori over $\mathbb{Q}$. 
\end{abstract}

\tableofcontents

\section{Introduction}\label{intr}

Shimura varieties are complex manifolds introduced by Deligne (\cite{Deligne1979}), which are important for number theory. We recall the definition of them. Let $(G,X)$ be a Shimura datum, that is, a pair of a reductive connected groups over $\Q$ and a hermitian symmetric space satisfying some conditions. Then, for a compact open subgroup $K$ of $G(\A_{f})$, where $\A_{f}$ is the finite ad{\`e}le ring of $\Q$, the Shimura variety $\Sh_{K}(G,X)$ for $(G,X)$ with level $K$ is defined as follows: 
\begin{equation*}
\Sh_{K}(G,X):=G(\Q)\backslash X\times G(\A_{f})/K. 
\end{equation*}
We denote by $\pi_{0}(\Sh_{K}(G,X))$ the set of connected components of $\Sh_{K}(G,X)$. 

Now take a parahoric or a Bruhat--Tits subgroup $K_p$ of $G(\Qp)$. Here a Bruhat--Tits subgroup means the \emph{full} stabilizer of a facet in the Bruhat--Tits building of $G\otimes_{\Q}\Qp$. Then there is a right action of $G(\A_{f}^{p})$ on the projective system $\{\Sh_{K^pK_p}(G,X)\}_{K^p}$, which is called as the prime-to-$p$ Hecke action. Here $\A_f^p$ is the finite ad{\`e}le ring of $\Q$ without $p$-component. Put
\begin{equation*}
\pi_{0}(\Sh_{K_p}(G,X)):=\varprojlim_{K^p}\pi_{0}(\Sh_{K^pK_p}(G,X)). 
\end{equation*}
Then the prime-to-$p$ Hecke action on $\{\Sh_{K^pK_p}(G,X)\}_{K^p}$ induces a right action of $G(\A_{f}^{p})$ on $\pi_{0}(\Sh_{K_p}(G,X))$. We consider the following question: 
\begin{itemize}
\item[(T)] Is the action of $G(\A_{f}^{p})$ on $\pi_{0}(\Sh_{K_p}(G,X))$ transitive?
\end{itemize}

It is known to be affirmative if $G_{\Qp}$ is unramified, that is, $G_{\Qp}$ is quasi-split and splits over an unramified extension of $\Qp$. This follows from the fact that the weak approximation on $G$ at $p$ holds. In particular, (T) is affirmative for all Siegel modular varieties with parahoric level at $p$. On the other hand, there is an affirmative example even if $G_{\Qp}$ is ramified. A typical example is that $(G,X)$ is of PEL type attached to a hermitian space in odd variables over an imaginary quadratic field which is ramified at $p>2$. This follows from \cite[p.2756]{He2020}. However, no negative example was constructed explicitly. 

Note that the positivity of (T) implies some nice properties on the mod $p$ reduction of the Shimura variety $\Sh_{K^pK_p}(G,X)$ for any $K^p$. For example, if $(G,X)$ is of Hodge type, we can derive some properties on Newton strata and Kottwitz--Rapoport strata, and can develop the theory of EKOR stratification. See \cite{He2017}, \cite[\S 9]{He2020} and \cite[\S 8]{Zhou2020b}. On the other hand, if $(G,X)$ is of PEL type A or C such that $\Sh_{K^pK_p}$ is neither a Hilbert modular variety nor a Siegel modular variety, the above transitivity is necessary for the known cases for the Hecke orbit conjecture. See \cite[\S 1]{Xiao2020}. 

In this paper, we consider the question (T) in the case that $(G,X)$ is of PEL type attached to a hermitian space over a CM field of odd dimension. This study includes the latter known result as mentioned above. 

\subsection{Main theorems}\label{mtsv}

Let $L$ be a CM field, that is, a totally imaginary quadratic extension of a totally real field. We denote by $L^{+}$ the maximum totally real subfield of $L$. Take a subset $S$ of $\Hom(L,\C)$ which maps bijectively to $\Hom(L^{+},\R)$ by the canonical restriction map $\Hom(L,\C)\rightarrow \Hom(L^{+},\R)$. For $\varphi \in S$, we denote by $\overline{\varphi}$ the composite of the complex conjugation and $\varphi$. Let $V$ be an $L/L^{+}$-hermitian space such that $V\otimes_{L,\varphi}\C$ has signature $(r_{\varphi},r_{\overline{\varphi}})$ for $\varphi \in S$. Here we assume that the constant $r_{\varphi}+r_{\overline{\varphi}}$ ($\varphi \in S$) is an \emph{odd} number. We define a reductive connected group $G_{V}$ over $\Q$ consisting of $L$-linear automorphisms of $V$ which respect the hermitian form up to a \emph{rational} scalar multiple. See Section \ref{unsm} for the precise definition. We denote by $X_{V}$ the $G_{V}(\R)$-conjugacy class of the morphism
\begin{equation*}
\bS \rightarrow G_{V,\R};z\mapsto (\diag(z^{(r_{\varphi})},\overline{z}^{(r_{\overline{\varphi}})}))_{\varphi \in S}. 
\end{equation*}
Then $(G_{V},X_{V})$ is a Shimura datum, and hence we can consider the Shimura varieties $\Sh_{K}(G_{V},X_{V})$ and the set $\pi_{0}(\Sh_{K_p}(G_{V},X_{V}))$ equipped with an action of $G_{V}(\A_{f}^{p})$ for any parahoric or Bruhat--Tits subgroup $K_p$ of $G_{V}(\Qp)$. We write (T$_{V,K_p}$) for the above question for $(G,X)=(G_{V},X_{V})$. 

\begin{thm}\label{mth1}(Theorem \ref{shc1})
\emph{Suppose that $L$ is an abelian extension of $\Q$. Then (T$_{V,K_p}$) is affirmative for any $V$ over $L$ and $K_p$ if one of the following hold: 
\begin{enumerate}
\item $L/L^{+}$ is split at all $v\mid p$,
\item the ramification index of $L^{+}/\Q$ at $p$ is an odd number,
\item $p>2$ and $[L:\Q]\not\in 32\Z$,
\item $p=2$ and $[L:\Q]\not\in 8\Z$. 
\end{enumerate}}
\end{thm}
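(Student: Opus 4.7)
The plan is to reduce the transitivity of the prime-to-$p$ Hecke action on $\pi_{0}(\Sh_{K_p}(G_V,X_V))$ to a weak approximation problem for the abelianization torus $T_V := G_V/G_V^{\der}$, in the spirit of the Bruhat--Colliot-Th\'el\`ene--Sansuc--Tits question referred to in the abstract. First I would invoke Deligne's description of the connected components of a Shimura variety, which applies here because the oddness of $n=\dim V$ makes $G_V^{\der}\cong\Res_{L^+/\Q}\SU_V$ simply connected. Passing to the limit over the prime-to-$p$ level $K^p$, one obtains a formula of the form
\[
\pi_{0}(\Sh_{K_p}(G_V,X_V)) \;\cong\; T_V(\Q)^{\dagger}\backslash T_V(\A_{f})/\nu(K_p),
\]
where $\nu\colon G_V\to T_V$ is the canonical projection and $T_V(\Q)^{\dagger}$ denotes a suitable closure attached to the Shimura datum. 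Since $\nu(K_p)$ is open in $T_V(\Q_p)$, transitivity of the $G_V(\A_{f}^{p})$-action is equivalent to density of $T_V(\Q)$ in $T_V(\Q_p)$ modulo $\nu(K_p)$, that is, to weak approximation for the torus $T_V$ at the single place $p$.

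Next I would make $T_V$ explicit. The oddness of $n$ is crucial: the similitude relation $\det(g)\overline{\det(g)}=c(g)^{n}$ lets one split the similitude character off from the determinant, so that $T_V$ is canonically isogenous to a product built from $\G_m$ and the norm-one torus $U^{1}_{L/L^{+}}:=\ker\bigl(\N_{L/L^{+}}\colon\Res_{L/\Q}\G_m\to\Res_{L^{+}/\Q}\G_m\bigr)$. In particular $T_V$ depends only on the CM pair $(L,L^{+})$ and not on the specific hermitian form, so the problem reduces to a purely cohomological question about this standard torus at $p$.

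With this in hand, the defect of weak approximation at $p$ is controlled by a finite cohomological invariant that, by Voskresenski\u{\i}--Colliot-Th\'el\`ene--Sansuc duality, is Pontryagin dual to a subquotient of $H^{1}(\Gal(L/\Q),\widehat{T}_V)$. Our hypothesis that $L/\Q$ is abelian makes this computation concrete: the decomposition groups at $p$ are quotients of the finite abelian group $\Gal(L/\Q)$, and the obstruction can be read off from the interaction between global characters and local decompositions. Cases (i) and (ii) are then handled quickly: in (i), every place of $L^{+}$ above $p$ splits in $L$, so the local torus $T_{V,\Q_p}$ is already quasi-split and weak approximation at $p$ is automatic; in (ii), the odd ramification index of $L^{+}/\Q$ at $p$, combined with the odd dimension of $V$, forces the relevant cohomology group to be annihilated both by $2$ and by an odd integer, and thus to vanish.

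The main obstacle, and the bulk of the work, lies in cases (iii) and (iv). Here I expect to compute the $2$-primary part of $H^{1}(\Gal(L/\Q),\widehat{T}_V)$ using Kronecker--Weber (so that $L$ sits inside a cyclotomic field) and to translate the vanishing of the local obstruction at $p$ into a divisibility statement for the $2$-part of $[L:\Q]$. The congruence conditions $[L:\Q]\not\in 32\Z$ for $p>2$ and $[L:\Q]\not\in 8\Z$ for $p=2$ should be precisely what ensures that the $2$-Sylow subgroup of $\Gal(L/\Q)$ is too small to support a nontrivial obstruction class at $p$. Verifying these thresholds carefully, and ruling out the boundary cases in which the obstruction might accidentally vanish for other reasons, will be the most technical step, and will likely require the explicit weak-approximation analysis for tori developed by Bruhat, Colliot-Th\'el\`ene, Sansuc and Tits.
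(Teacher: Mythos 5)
Your framework is the right one: Deligne's description of $\pi_0$, the reduction to the abelianization torus, and the identification (via the odd rank condition) of $G_V/G_V^{\der}$ with $T_{L/L^{+}}$ all match what the paper does in Section \ref{ccmt} (Theorem \ref{cntr}). But there are several genuine gaps in the route you then take.

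First, you conflate the question the paper actually answers with weak approximation for $T_V$ at $p$. The target is ``$T_{L/L^{+}}(\Q)\cdot K = T_{L/L^{+}}(\Q_p)$'' for a \emph{fixed} compact open $K$ (the image of $K_p$ under $\delta_V$), which is the Bruhat--Colliot-Th\'el\`ene--Sansuc--Tits question (A). Weak approximation at $p$ is a strictly stronger property that would suffice, but the whole technical content of the paper is that it can still conclude (A) in cases where no purely local sufficient criterion applies. Concretely, in the range $8\mid [L:\Q]$, $32\nmid[L:\Q]$, $p>2$ (inside your case (iii)), the local $R$-equivalence criterion fails, so one cannot just cite a vanishing local/cohomological obstruction. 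The paper handles this with a direct global argument: Chebotarev and the Dirichlet theorem produce an auxiliary prime $\ell$ with controlled splitting behaviour, and the Hasse norm principle for the cyclic extension $L/L^{+}$ is used to manufacture a global element $p\ell\in\N_{L/L^{+}}(L^{\times})$. A computation of a ``defect of weak approximation'' group $H^{1}(\Gal(L/\Q),\widehat{T}_V)$ does not substitute for this; you would have to argue separately that the obstruction vanishes exactly at $p$, and the Chebotarev/Hasse arguments are effectively what makes that work. Your proposal names the obstruction group and stops; the paper's proof (Theorem \ref{mtaa}, case (c)) is precisely the nontrivial step you would have to supply.

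Second, you omit the distinction between Bruhat--Tits and parahoric level at $p$, which creates two distinct questions. For a Bruhat--Tits $K_p$, the open subgroup $\delta_V(K_p)$ is the full maximal compact $K_{L/L^{+},p}$, and the question is (A). For a parahoric $K_p$, Lemma \ref{prhr} shows $\delta_V(K_p)$ equals the kernel $K^{\circ}_{L/L^{+},p}$ of the Kottwitz map, and one needs the strictly stronger question (A$^{\circ}$). In the ramified case (Theorem \ref{mtas}), the proof of (A$^{\circ}$) involves an explicit computation of the Kottwitz map of $T_{L/L^{+}}\otimes\Q_p$ and does not simply reduce to the case of $K_{L/L^{+},p}$. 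Your account says nothing about the Kottwitz map and treats all levels as if they had the same image under $\delta_V$.

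Third, your argument for case (ii) is incorrect. You invoke ``the odd dimension of $V$'' to get the relevant group annihilated by an odd integer, but once $T_V \cong T_{L/L^{+}}$ has been identified, the torus and all local/cohomological quantities no longer refer to $n$; oddness of $n$ was already spent in making $G^{\der}_V$ simply connected and in the isomorphism $G_V/G_V^{\der}\cong T_{L/L^{+}}$. The paper's case (ii) is instead handled by the reduction Lemma \ref{rdtp}: after replacing $L$ by its $2$-part, an odd ramification index of $L^{+}/\Q$ at $p$ that is also a power of $2$ must be $1$, so $L^{+}_{\Q_p}/\Q_p$ becomes unramified and Corollary \ref{m3af}(i) yields the local positivity of $(R_{T_{\Q_p}})$. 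That is a completely different mechanism from annihilation by $2$ \emph{and} an odd integer.

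In short: the opening reduction is sound, and the broad strategy (pass to the abelianization, reduce to a statement about $T_{L/L^{+}}$ at $p$) is the paper's strategy; but the plan loses the parahoric/Kottwitz subtlety entirely, misidentifies the torus question as weak approximation, and therefore misses the genuinely global class-field-theoretic argument that the paper needs to close the case $8\mid[L:\Q]$, $32\nmid[L:\Q]$.
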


On the other hand, there are infinitely many negative examples in (T$_{V,K_p}$) for any $p$. 

\begin{thm}\label{mth2}(Theorem \ref{shc2})
\emph{
\begin{enumerate}
\item Assume $p>2$. For $d\in 32\Z$, there is an infinite family $\{L_j\}_{j\in J}$ of CM fields of degree $d$ that are abelian over $\Q$ such that (T$_{V,K_p}$) is negative for any $V$ over $L_j$ and $K_p$. Moreover, if $p\equiv 1\bmod 4$, then both the sets
\begin{equation*}
J_{\ur}:=\{j\in J\mid L_j/L_j^{+}\text{ is unramified at all }v\mid p\}
\end{equation*}
and $J\setminus J_{\ur}$ are infinite. 
\item For $d\in 8\Z$, there is an infinite family $\{L_j\}_{j\in J'}$ of CM fields of degree $d$ that are abelian over $\Q$ such that (T$_{V,K_2}$) is negative for any $V$ over $L_j$ and $K_2$. Moreover, both the sets
\begin{equation*}
J'_{\ur}:=\{j\in J'\mid L_j/L_j^{+}\text{ is unramified at all }v\mid p\}
\end{equation*}
and $J'\setminus J'_{\ur}$ are infinite. 
\end{enumerate}}
\end{thm}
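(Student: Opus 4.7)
The plan is to translate (T$_{V,K_p}$) into a weak-approximation problem for the abelianization torus $T_V := G_V^{\ab}$: this is the Bruhat--Colliot-Th\'el\`ene--Sansuc--Tits (BCTST) connection advertised in the abstract, which is set up in the body of the paper in the course of proving Theorem \ref{mth1}. Since the dimension $n = r_\varphi + r_{\overline\varphi}$ is odd, the derived subgroup of $G_V$ is simply connected and $T_V$ admits a concrete description purely in terms of the $\Gal(\overline\Q/\Q)$-module structure of $L$. Thus Theorem \ref{mth2} reduces to exhibiting, for each prescribed degree $d$, infinitely many abelian CM fields $L$ of degree exactly $d$ for which the BCTST obstruction attached to $T_V$ is nontrivial, with controlled behaviour of $p$ in $L/L^{+}$.

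Next I would pin down this obstruction cohomologically. It sits in a finite subquotient of the Galois cohomology of the character module of $T_V$; for $L/\Q$ abelian the computation reduces to group cohomology of $\Gal(L/\Q)$ with coefficients coming from the permutation module of embeddings together with decomposition data at $p$. A rank analysis shows that a nonzero class forces $\Gal(L/\Q)$ to contain a sufficiently large elementary abelian $2$-subgroup; the minimal degree producing such an $L$ is $32$ when $p$ is odd and $8$ when $p=2$, the gap of $2^{2}$ reflecting two units of $2$-rank supplied at $p=2$ by the $2$-adic local datum itself. Sharpness of these thresholds is already the content of cases (iii)--(iv) of Theorem \ref{mth1}.

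I would then build the families by composita. A finite check on abelian $2$-elementary extensions produces a base field $L^{(0)}$ of the minimal degree carrying a nonzero class. For each required $d$ I form $L_j := L^{(0)}\cdot M_j$ with $M_j$ an auxiliary abelian extension of $\Q$ chosen so that $[L_j:\Q] = d$ exactly and $L_j$ remains CM. The subfamily $J_{\ur}$ is populated by taking $M_j$ unramified at $p$ (for instance inside cyclotomic fields $\Q(\zeta_{\ell_j})$ for auxiliary primes $\ell_j$ in suitable Dirichlet classes), while $J \setminus J_{\ur}$ is populated by taking $M_j$ with $p$ ramified in $M_j/M_j^{+}$. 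The hypothesis $p\equiv 1\bmod 4$ in (i) is exactly what allows both behaviours simultaneously, while at $p=2$ both are automatic. Infinitude of each subfamily follows from Dirichlet's theorem, and the obstruction class of $L^{(0)}$ persists to $L_j$ by a standard inflation argument once $L^{(0)}$ and $M_j$ are linearly disjoint over a controlled intersection.

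The main obstacle is the middle step: producing an explicit nontrivial class at the threshold $2$-rank and verifying the exact degree bound. Step one is the reduction already achieved in the paper, and step three is bookkeeping with Dirichlet's theorem together with elementary disjointness manipulations on abelian extensions.
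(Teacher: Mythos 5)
Your step 1 (reduction to the BCTST question for $T_{L/L^{+}} = G_V^{\ab}$, distinguishing the Bruhat--Tits/parahoric levels via $K_{L/L^{+},p}$ vs.\ $K_{L/L^{+},p}^{\circ}$) matches the paper exactly (Theorem \ref{cntr}). But step 2 contains a genuine error that undermines the whole construction. You claim the obstruction is a single cohomological class whose nonvanishing forces $\Gal(L/\Q)$ to contain a \emph{large elementary abelian $2$-subgroup}, with threshold degrees $32$ and $8$. This is factually wrong: inspect the paper's minimal negative examples $L_{(\ell,\ell'),3,0}$ in Section \ref{nexc}, which have $\Gal(L/\Q)\cong \Z/8\times \Z/4$ and hence $2$-rank equal to $2$ --- no larger than an imaginary biquadratic field. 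The degree bound $32$ does not come from a rank bound at all; it comes from needing a cyclic factor $\Z/2^{m}$ with $m\geq 3$, plus a $\Z/4$, so that the decomposition group at $p$ can realize the failure of $R$-equivalence in Theorem \ref{rpng} while the configuration at an auxiliary prime $\ell_0$ obstructs global norms (Proposition \ref{ngax}). More seriously, the obstruction is not captured by group cohomology of $\Gal(L/\Q)$ alone: the paper's construction deliberately couples two independent ingredients --- (a) a local failure of $(R_{T_{\Qp}})$ determined by $D_p(L/\Q)$ and $I_p(L/\Q)$, and (b) a global Hasse-norm-principle argument at an auxiliary prime $\ell_0\equiv 1\bmod 4$ with $D_{\ell_0}=I_{\ell_0}\cong \Z/2^m\times\{0\}$ and with $p$ a non-residue modulo $\ell_0$. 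Indeed, the paper shows (case (c) in the proof of Theorem \ref{mtaa}, treating $\Gal(L/\Q)\cong\Z/4\times\Z/4$) that the local obstruction at $p$ alone can be repaired globally; so a purely local or purely $\Gal(L/\Q)$-cohomological criterion cannot be sharp. Your proposal never introduces the auxiliary prime $\ell_0$, and without it there is no mechanism to make the global question $(\text{A}_{L/L^{+},p})$ fail.

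Step 3 is likewise unjustified as stated. The ``inflation argument'' for passing the obstruction from $L^{(0)}$ to a compositum $L^{(0)}\cdot M_j$ is only available in the paper for \emph{odd-degree} extensions (Lemma \ref{rdtp}), precisely because multiplication by an odd $d$ is injective on $X_*(T_{L/L^{+}})_{I_{\Qp}}$ (Lemma \ref{ts2t}); for even-degree composita the Kottwitz module changes and there is no clean inheritance. This is why the paper builds, for each $2$-power degree $2^{m+2}$ (resp.\ $2^{m+3}$), a fresh field $L_{(\ell,\ell'),m,\bullet}$ with the whole structure (Galois group, decomposition and inertia at $p$, and the auxiliary prime $\ell_0=\ell'$) engineered from scratch by Chebotarev, and then extends only by odd degree. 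Your Dirichlet-style choice of $\ell_j$ and the split into $J_{\ur}$ versus $J\setminus J_{\ur}$ mirrors the paper's use of $L_{(\ell,\ell'),m,0}$ versus $L_{(\ell,\ell'),m,1}$, so that part of the plan is salvageable once the missing ingredient --- the explicit sufficient condition of Proposition \ref{ngax} involving $\ell_0$ --- is supplied and the composita are restricted to odd degree.
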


\begin{rem}
\begin{enumerate}
\item Theorem \ref{mth1} (ii) implies that (T$_{V,K_p}$) is affirmative for any $V$ over $L$ and $K_p$ in the following cases: 
\begin{itemize}
\item $L$ is an imaginary quadratic field,
\item $p\equiv -1\bmod 4$ and $L/L^{+}$ is ramified at all $v\mid p$. 
\end{itemize}
\item A CM field $L_j$ ($j\in J$) as in Theorem \ref{mth2} (i) is tamely ramified over $\Q$ at $p$ if $d\not\in p\Z$. Hence $G_{V}$ splits over a tamely ramified extension for all $V$ over $L_j$. 
\end{enumerate}
\end{rem}

\subsection{The question of Bruhat--Colliot-Th{\'e}l{\`e}ne--Sansuc--Tits}\label{mtgr}

Let $L_{0}$ be a global field, $v_0$ a place of $L_0$ and $T$ a torus over $L_0$. We denote by $K_{T,v_0}$ the maximum compact open subgroup of $T(L_{0,v_0})$, where $L_{0,v_0}$ is the completion of $L_0$ at $v_0$. Then the question of Bruhat--Colliot-Th{\'e}l{\`e}ne--Sansuc--Tits is as follows: 
\begin{itemize}
\item[(A)] Does we have $T(L_0)\cdot K_{T,v_0}=T(L_{0,v_0})$?
\end{itemize}
Note that this is formulated in the paper of Colliot-Th{\'e}l{\`e}ne--Sansuc as a question of Bruhat--Tits (\cite[Remark 8.3]{ColliotThelene1987}). 

On the other hand, let $K_{T,v_0}^{\circ}$ be the kernel of the Kottwitz map of $T\otimes_{L_{0}}L_{0,v_0}$ defined by \cite[7.1--7.3]{Kottwitz1997}, which is a subgroup of $K_{T,v_0}$ of finite index. We also consider a variant of (A) as follows: 
\begin{itemize}
\item[(A$^{\circ}$)\hspace{-4mm}] \hspace{4mm}Does we have $T(L_0)\cdot K_{T,v_0}^{\circ}=T(L_{0,v_0})$?
\end{itemize}

Let $L$ and $L^{+}$ be as in Section \ref{mtsv}. We define a $\Q$-torus
\begin{equation*}
T_{L/L^{+}}:=\{t\in \Res_{L/\Q}\G_m\mid \N_{L/L^{+}}(t)\in \G_m\}. 
\end{equation*}
Take a prime number $p$, and we denote by $K_{L/L^{+},p}$ the maximum compact open subgroup of $T_{L/L^{+}}(\Qp)$. Here we consider (A) and (A$^{\circ}$) for the $\Q$-torus $T=T_{L/L^{+}}$ and the prime $p$, which are denoted by (A$_{L/L^{+},p}$) and (A$_{L/L^{+},p}^{\circ}$) respectively.  Note that (A$_{L/L^{+},p}$) and (A$_{L/L^{+},p}^{\circ}$) are affirmative if the weak approximation on $T_{L/L^{+}}$ at $p$ holds. Then Theorems \ref{mth1} and \ref{mth2} are reduced to the assertions as follow respectively. 

\begin{thm}\label{mta1}(Theorem \ref{mtaa}, Theorem \ref{mtas}) 
\emph{Suppose that $L$ is an abelian extension of $\Q$. Then both (A$_{L/L^{+},p}$) and (A$_{L/L^{+},p}^{\circ}$) are affirmative if one of the following hold: 
\begin{enumerate}
\item $L/L^{+}$ is split at all $v\mid p$,
\item the ramification index of $L^{+}/\Q$ at $p$ is an odd number,
\item $p>2$ and $[L:\Q]\not\in 32\Z$,
\item $p=2$ and $[L:\Q]\not\in 8\Z$. 
\end{enumerate}}
\end{thm}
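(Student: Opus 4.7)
The plan is to translate both questions (A$_{L/L^+,p}$) and (A$_{L/L^+,p}^{\circ}$) into the vanishing of a finite Galois-cohomological obstruction attached to the character module of $T_{L/L^+}$, and then to exploit the abelian hypothesis on $\Gal(L/\Q)$ to verify this vanishing in each of the four situations (i)--(iv). By the theory of Colliot-Th\'el\`ene--Sansuc recalled in \cite{ColliotThelene1987}, the defect of weak approximation at a single place $p$ for a $\Q$-torus $T$ is a finite abelian group Pontryagin dual to a piece of $H^1(\Q,\hat T)$ detecting the decomposition groups at the places away from $p$; the variant (A$^{\circ}$) replaces $K_{T,p}$ by the kernel of the Kottwitz map, which corresponds to a slightly enlarged cohomological obstruction. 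One begins the proof from such a reformulation, presumably established earlier in the body of the paper in the statements of Theorems \ref{mtaa} and \ref{mtas}.

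Next, one uses the short exact sequence
\begin{equation*}
1 \to \Res^{(1)}_{L/L^+}\G_m \to T_{L/L^+} \to \G_m \to 1,
\end{equation*}
where $\Res^{(1)}_{L/L^+}\G_m$ is the kernel of the norm, in order to reduce the obstruction to one involving only the norm-one torus, since $\G_m$ satisfies weak approximation. The character module of $\Res^{(1)}_{L/L^+}\G_m$ is the augmentation ideal of $\Z[\Gal(L/L^+)]$, so under the hypothesis that $\Gal(L/\Q)$ is abelian the whole Galois action factors through an explicit abelian group; the decomposition groups become abelian subquotients, and the local picture is determined by the splitting and ramification behaviour at $p$ together with the action of complex conjugation at infinity.

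The four cases then divide as follows. In case (i), where $L/L^+$ is split at every $v \mid p$, the base change $T_{L/L^+} \otimes_{\Q} \Qp$ decomposes into factors each a twist of $\G_m$, and direct inspection shows the obstruction is trivial. In case (ii), the odd ramification index of $L^+/\Q$ at $p$ kills the $2$-primary contribution coming from the inertia at $p$; since complex conjugation is the only other source of $2$-torsion in the obstruction, compatibility inside the abelian Galois group then forces the entire obstruction to vanish. In cases (iii) and (iv) the upper bound on the $2$-adic valuation of $[L:\Q]$ bounds the size of the $2$-part of $\Gal(L/\Q)$, and an explicit cocycle computation, together with the fact that complex conjugation is a specific order-two element of this abelian group, shows that no obstruction class can exist below the thresholds $32$ (respectively $8$ when $p=2$).

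The main obstacle is the sharpness argument underlying (iii) and (iv): one must pin down exactly why these particular powers of $2$ arise, which requires a careful case analysis of the possible local behaviours of $L$ at $p$ when the $2$-part of $\Gal(L/\Q)$ is small, matched by a parallel structural calculation in Galois cohomology. Reversing the perspective of this same computation is what should supply the constructions of the infinite negative families in Theorem \ref{mth2}, so consistency between the two theorems will guide the bookkeeping in the delicate boundary cases.
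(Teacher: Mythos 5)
Your proposal mistakes question (A$_{L/L^{+},p}$) for the weak approximation problem for $T_{L/L^{+}}$ at $p$, and then tries to settle everything by a local Galois-cohomological obstruction. This misses the central subtlety of the paper's argument. Weak approximation at $p$ would indeed give (A$_{L/L^{+},p}$), but the converse fails: one can have $T(\Q)\cdot K_{T,p}=T(\Qp)$ while $T(\Q)$ is \emph{not} dense in $T(\Qp)$. The paper's Proposition \ref{r2wa} records only that the local $R$-equivalence questions ($R_{T_{\Qp}}$) and ($R_{T_{\Qp}}^{\circ}$) are \emph{sufficient} for (A) and (A$^{\circ}$); Section \ref{abng} then computes exactly when these local questions fail (Theorems \ref{rpng}, \ref{r2ng}, \ref{rcrm}), and those failure cases are not at all excluded by the hypotheses (ii)--(iv). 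For example, $[L:\Q]=16$ passes condition (iii) even though $\Gal(L/\Q)\cong\Z/4\times\Z/4$ can make ($R_{T_{L_{\Qp}/L_{\Qp}^{+},\Qp}}$) negative. Any purely local cohomological criterion, including the one you sketch, is therefore too coarse.

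The missing idea is that in those hard subcases a \emph{global} argument is required. In the unramified situation (case (c) of the proof of Theorem \ref{mtaa}), the paper produces an auxiliary prime $\ell$ via Dirichlet/Chebotarev so that $p\ell$ lies in the norm group of $L/L^{+}$ at every finite place, then invokes the Hasse norm principle for the cyclic extension $L/L^{+}$ to conclude $p\ell\in \N_{L/L^{+}}(L^{\times})$ and hence exhibit the required element of $T_{L/L^{+}}(\Q)$. In the ramified situation (case (b) of Theorem \ref{mtas}), where one only needs (A$^{\circ}$), the paper instead passes to a subfield $L'\subset L$ with ramified cyclic decomposition group at $p$, imports a rational element from $T_{L'/L'^{+}}(\Q)$, and does an explicit Kottwitz-map calculation over $\Qpb$ to show its image escapes $\kappa_{T_{L/L^{+},\Qp}}(RT_{L/L^{+}}(\Qp))$. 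Neither of these is ``an explicit cocycle computation''; they are class field theory arguments with auxiliary primes, or a bare-hands calculation with the Kottwitz homomorphism, layered on top of the local $R$-equivalence bookkeeping. Your proposal, as it stands, neither identifies the need for this global input nor says how the four cases (i)--(iv) actually separate the local-positive subcases from the ones needing the Hasse norm principle or a Kottwitz-map argument; this is where the substance of the paper's proof lies and where your outline would break down.
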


\begin{thm}\label{mta2}(Theorem \ref{ngmt}) 
\emph{
\begin{enumerate}
\item Assume $p>2$. For $d\in 32\Z$, there is an infinite family $\{L_j\}_{j\in J}$ of CM fields of degree $d$ that are abelian over $\Q$ such that both (A$_{L_j/L_j^{+},p}$) and (A$_{L/L^{+},p}^{\circ}$) are negative for any $j\in J$. Moreover, if $p\equiv 1\bmod 4$, then both the sets
\begin{equation*}
J_{\ur}:=\{j\in J\mid L_j/L_j^{+}\text{ is unramified at all }v\mid p\}
\end{equation*}
and $J\setminus J_{\ur}$ are infinite. 
\item For $d\in 8\Z$, there is an infinite family $\{L_j\}_{j\in J'}$ of CM fields of degree $d$ that are abelian over $\Q$ such that both (A$_{L_j/L_j^{+},2}$) and (A$_{L_j/L_j^{+},2}^{\circ}$) are negative for any $j\in J'$. Moreover, both the sets
\begin{equation*}
J'_{\ur}:=\{j\in J'\mid L_j/L_j^{+}\text{ is unramified at all }v\mid p\}
\end{equation*}
and $J'\setminus J'_{\ur}$ are infinite. 
\end{enumerate}}
\end{thm}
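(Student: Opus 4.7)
The strategy is to construct explicit CM fields $L_j$ abelian over $\Q$ for which the obstruction to (A$_{L_j/L_j^+,p}$) is nontrivial, and then to note that a negative answer for (A) forces one for (A$^\circ$): since $K_{T,p}^\circ \subset K_{T,p}$, the inclusion $T(\Q)\cdot K_{T,p}^\circ \subset T(\Q)\cdot K_{T,p}$ means that if the latter fails to exhaust $T(\Qp)$ then so does the former. Thus it suffices to exhibit, for each $d \in 32\Z$ (resp.\ $d \in 8\Z$), an infinite supply of CM fields $L_j/\Q$ abelian of degree $d$ with $T_{L_j/L_j^+}(\Q) \cdot K_{L_j/L_j^+,p} \neq T_{L_j/L_j^+}(\Qp)$, under the additional splitting/unramified constraints claimed.

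The first step is to translate (A$_{L/L^+,p}$) into a cohomological statement. Using the exact sequence
\begin{equation*}
1 \to T^1_{L/L^+} \to T_{L/L^+} \xrightarrow{\N_{L/L^+}} \G_m \to 1
\end{equation*}
and the explicit description of the character lattice $X^*(T_{L/L^+})$ in terms of $\Hom(L,\overline{\Q})$, the defect group $T_{L/L^+}(\Qp)/\bigl(T_{L/L^+}(\Q)\cdot K_{L/L^+,p}\bigr)$ becomes a relative Galois cohomology group determined by $G := \Gal(L/\Q)$ together with the decomposition subgroup $D_p \subset G$ at $p$. The positivity criteria of Theorem \ref{mta1} pinpoint exactly when this group vanishes, and reading them contrapositively should reveal why $32\Z$ (resp.\ $8\Z$) is the sharp threshold: these are the smallest degrees at which the elementary abelian $2$-part of $G$ is large enough, relative to a suitable $D_p$, to support a nonzero class.

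With such a criterion at hand, I will build the examples by taking $L_j$ to be a compositum of quadratic fields over $\Q$. For $p > 2$ and $d = 32k$, the plan is to choose five rational primes $q_1,\dots,q_5$ (with $q_1$ arranged so that $\Q(\sqrt{-q_1})$ is imaginary and makes the compositum CM), together with an auxiliary cyclic CM-compatible factor of degree $k$, so that $\Gal(L_j/\Q)$ contains an $(\Z/2\Z)^5$ quotient in which the decomposition subgroup at $p$ lies as a proper subgroup realizing the nonzero obstruction class; congruence conditions on the $q_i$ modulo $p$ (and modulo each other) force the required local behavior at $p$. Dirichlet's theorem on primes in arithmetic progressions then yields infinitely many valid tuples $(q_1,\dots,q_5)$, hence infinitely many pairwise non-isomorphic $L_j$. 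The case $p = 2$, $d \in 8\Z$ is handled identically with $(\Z/2\Z)^3$ in place of $(\Z/2\Z)^5$. To make both $J_{\ur}$ and $J\setminus J_{\ur}$ infinite, observe that $L_j/L_j^+$ is unramified at primes above $p$ precisely when the imaginary quadratic subfield $\Q(\sqrt{-q_1})$ is unramified at $p$; for $p \equiv 1 \bmod 4$ one has enough flexibility in the residue class of $q_1$ modulo $p$ to toggle between split and ramified, and an analogous choice of imaginary factor (e.g.\ among $\Q(\sqrt{-1}), \Q(\sqrt{-2}), \Q(\sqrt{-q})$ for odd $q$) works for $p = 2$.

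The main obstacle will be extracting the precise cohomological criterion from the proof of Theorem \ref{mta1} and then verifying that a concrete choice of $\bigl(G, D_p\bigr) = \bigl((\Z/2\Z)^5, D\bigr)$ (resp.\ $(\Z/2\Z)^3$) genuinely forces a nonzero class — this is where the delicate local/global interplay lives and where the sharp degrees $32$ and $8$ are earned. Once the criterion and a single witness configuration are in place, infinitude of $\{L_j\}$ and the partition into $J_{\ur}$ and $J \setminus J_{\ur}$ reduce to routine Dirichlet/Chebotarev bookkeeping, and the passage from (A) negative to (A$^\circ$) negative is immediate.
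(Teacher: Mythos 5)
Your overall framing --- reduce to exhibiting $L_j$ with $T_{L_j/L_j^+}(\Q)\cdot K_{L_j/L_j^+,p}\neq T_{L_j/L_j^+}(\Qp)$, then observe that $(\mathrm A)$ negative forces $(\mathrm A^\circ)$ negative since $K^\circ\subset K$ --- is correct and matches the paper. But the construction you propose does not work, for two related reasons.

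First, building $L_j$ as a compositum of quadratic fields gives $\Gal(L_j/\Q)$ an elementary abelian $2$-group quotient $(\Z/2)^5$ (or $(\Z/2)^3$), and you propose to realize the obstruction class there. The paper's sufficient criterion for negativity (Proposition \ref{ngax}) demands an isomorphism $\Gal(L/\Q)\cong\Z/2^m\times H$ with $m\ge 2$, $H$ a $2^{m-1}$-torsion group, $\Gal(L/L^+)$ the order-$2$ subgroup of the $\Z/2^m$ factor, and an auxiliary prime $\ell_0\equiv 1\bmod 4$ whose decomposition group equals its inertia group and is the full cyclic $\Z/2^m\times\{0\}$. An elementary abelian $(\Z/2)^n$ contains no cyclic subgroup of order $\geq 4$, so none of your composita of quadratic fields can satisfy this. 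The paper instead composes a cyclic degree-$2^m$ subfield of $\Q(\zeta_{\ell'})$ (forcing a genuine $\Z/2^m$ factor, $m\geq 3$) with a degree-$4$ piece coming from $\Q(\zeta_\ell)$ and a quadratic twist by $\sqrt{(-1)^{(p-1)/2}p}$; the exponents $2^5=32$ and $2^3=8$ come from counting the degree $2^{m+2}$ of such a cyclic-by-small compositum, not from the rank of an elementary abelian $2$-group.

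Second, your plan locates the obstruction at the decomposition group $D_p$ itself and proposes congruence conditions on the $q_i$ modulo $p$. The actual obstruction in the paper is not purely local at $p$: since $L/L^+$ is cyclic of degree $2$ the Hasse norm principle applies, so (A) fails precisely when a target element of $\Qp^\times$ cannot be hit by any rational $pa$ lying in \emph{all} local norm groups simultaneously. The decisive constraint is supplied by an \emph{auxiliary} prime $\ell_0$ (a prime of cyclic ramification in $L$ with $D_{\ell_0}=I_{\ell_0}$ of order $2^m$), where $p$ is deliberately arranged not to be a local norm --- see conditions (i) and (iv) of Proposition \ref{ngax} and the rest of the argument. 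Without isolating this auxiliary-prime mechanism, the "routine Chebotarev bookkeeping" you defer to would have nothing to pin down. Your reduction to (A$^\circ$) and your plan for splitting $J$ into $J_{\ur}$ and $J\setminus J_{\ur}$ by toggling the behavior of the imaginary quadratic subfield at $p$ are sound ideas, but the underlying configurations need to be the cyclic ones from the paper, not elementary abelian ones.
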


\begin{rem}
In the case that $L_0$ is the field of rational functions of one variable over a finite field containing a $4$-th root of unity, then a $4$-dimensional counterexample in (A) is given by Colliot-Th{\'e}l{\`e}ne and Suresh (\cite{ColliotThelene2007}). Theorem \ref{mta2} gives the same examples as above in the case $L_0=\Q$ and $v_0=p$. 
\end{rem}

The proof of Theorem \ref{mta1} is divided into two parts. The first part is the study on the purely local question proposed by \cite{ColliotThelene2007} and its variant, which will be denoted by ($R_{T}$) and ($R_{T}^{\circ}$) respectively for a torus $T$ over a non-archimedean local field. It turns out that the positivity of ($R_{T_{L/L^{+},\Qp}}$) and ($R_{T_{L/L^{+},\Qp}}^{\circ}$) imply those of (A$_{L/L^{+},p}$) and (A$_{L/L^{+},p}^{\circ}$) respectively. See Proposition \ref{r2wa}. The assertions (i), (ii) and (iv) can be proved by this part. However, in the case (iii), ($R_{T_{L/L^{+},\Qp}}$) and ($R_{T_{L/L^{+},\Qp}}^{\circ}$) may becomes negative if $[L:\Q]\in 8\Z$. The second part is a direct study of (A$_{L/L^{+},p}$) and (A$_{L/L^{+},p}^{\circ}$). If $L/L^{+}$ is unramified at all $v\mid p$, we prove that (A$_{L/L^{+},p}$) and (A$_{L/L^{+},p}^{\circ}$) are affirmative by studying the norm image of $L/L^{+}$. Here we use the global class field theory and the Chebotarev density theorem. In the ramified case, the proof of the positivity of (A$_{L/L^{+},p}^{\circ}$) is given by that of (A$_{L'/L'^{+},p}$) for another CM field $L'$ and an explicit study of the Kottwitz map of $T_{L/L^{+}}\otimes_{\Q}\Qp$. 

On the other hand, for a proof of Theorem \ref{mta2}, we give a sufficient condition for the negativity of (A$_{L/L^{+},p}$), and construct such $L$ for any $p$ by using the Chebotarev density theorem. Note that $L$ also gives the negativity of (A$_{L/L^{+},p}^{\circ}$) since $K_{L/L^{+},p}^{\circ}$ is contained in $K_{L/L^{+},p}$. 

\begin{rem}
It is natural to study (T$_{V,K_p}$), (A$_{L/L^{+},p}$) and (A$_{L/L^{+},p}^{\circ}$) for non-abelian $L$. Moreover, we can consider (T) for other Shimura varieties. However, we may not reduce it to the similar question as (A$_{L/L^{+},p}$) and (A$_{L/L^{+},p}^{\circ}$) by the same method as above. It occurs when $(G,X)$ is of PEL type $\mathrm{D}$. These are two of our future problems. 
\end{rem}

\begin{oftp}
In Section \ref{pddf}, we recall general notions on tori, and introduce the coflasque resolution and the $R$-equivalence. Section \ref{reqp} is the first technical heart of this paper. Here we introduce the questions ($R_{T}$) and ($R_{T}^{\circ}$) for tori $T$ over non-archimedean local fields, and consider them for $T=T_{L/L^{+}}\otimes_{\Q}\Qp$ where $L/\Q$ is abelian. The goal of this section is specifying all $L/L^{+}$ for which ($R_{T_{L/L^{+},\Qp}}$) and ($R_{T_{L/L^{+},\Qp}}^{\circ}$) are negative. The second technical heart of this paper is Section \ref{warf}. Here we prove Theorems \ref{mta1} and \ref{mta2} by using the results in Section \ref{reqp} and a global method. Finally, we derive Theorems \ref{mth1} and \ref{mth2} from Theorems \ref{mta1} and \ref{mta2} respectively in Section \ref{ccmt}. 
\end{oftp}

\begin{ack}
I would like to thank my advisor Yoichi Mieda for his constant support and encouragement.

This work was carried out with the support from the Program for Leading Graduate Schools, MEXT, Japan. This work was also supported by the JSPS Research Fellowship for Young Scientists and KAKENHI Grant Number 19J21728.
\end{ack}

\begin{nota}
\begin{itemize}
\item For a field $k_0$, we fix a separable closure $k_0^{\sep}$ of $k_0$. For a subextension $k/k_0$ of $k_{0}^{\sep}/k_0$ and a Galois extension $k'$ of $k$, we denote by $\Gal(k'/k)$ the Galois group of $k'/k$. Moreover, we set $\Gamma_{k}:=\Gal(k_0^{\sep}/k)$. 
\item For a complete discrete valuation field $F_0$, we denote by $\val_{F_0}$ the valuation map on $F_0$ so that $\val_{F_0}(\varpi)=1$, where $\varpi$ is a uniformizer of $F_0$. 
\item Let $F_0$ be a non-archimedean local field, that is, a complete discrete valuation field whose residue field is a finite field $\F_{q}$. We denote by $I_{F_0}$ the inertia subgroup of $\Gamma_{F_0}$. Moreover, if $F_0=\Qp$, we set $\ord_{p}:=\val_{\Qp}$. On the other hand, for a finite Galois extension $F/F_0$, we choose a lift $\sigma$ on $F$ of the $q$-th power map on $\F_{q}$. 
\item Let $M$ be an abelian group. We denote by $M_{\tor}$ the tosion part of $M$. Moreover, if $M$ is equipped with an action of a group $G$, then we write $M^{G}$ and $M_{G}$ for the $G$-invariant and the $G$-coinvariant parts of $M$ respectively. 
\item The symbol $\delta_{ij}$ is the Kronecker's delta, that is,
\begin{equation*}
\delta_{ij}=
\begin{cases}
1&\text{if }i=j,\\
0&\text{otherwise}. 
\end{cases}
\end{equation*}
\end{itemize}
\end{nota}

\section{Preliminaries}\label{pddf}

\subsection{General theory on tori}

Let $k_0$ be a field. Recall that a torus over $k_0$ is an algebraic group $T$ over $k_0$ satisfying $T\otimes_{k}k^{\sep}\cong \G_m^{n}$ for some $n\in \Zpn$. Here $\G_m$ is the multiplicative group scheme. For a torus $T$ over $k_0$, put
\begin{equation*}
X_{*}(T):=\Hom_{k^{\sep}}(\G_{m,k^{\sep}},T\otimes_{k}k^{\sep}). 
\end{equation*}
It is a finite free $\Z$-module. We equip $X_{*}(T)$ with a left action of $\Gamma_{k_0}$ by the Galois conjugation: 
\begin{equation*}
\Gamma_{k_0}\times X_{*}(T)\rightarrow X_{*}(T);(\tau,c)\mapsto \Ad(\id_{T}\otimes (\tau^{-1})^{*})(c)
\end{equation*}
Then the action factors through $\Gamma_{k}$ where $k$ is a finite separable extension of $k_0$. 

In the sequel, we equip $\Z$ with the trivial action on $\Gamma_{k_0}$. 

\begin{prop}\label{treq}
\emph{The assignment $T\mapsto X_{*}(T)$ gives an equivalence between the category of tori over $k_0$ and that of finite free $\Z$-modules with left actions of $\Gamma_{k_0}$. Its quasi-inverse is given by $M\mapsto (M\otimes_{\Z}k_{0}^{\sep,\times})^{\Gamma_{k_0}}$. Moreover, the following hold. 
\begin{enumerate}
\item The $k_0$-torus $T$ splits over a finite separable extension $k$ of $k_0$ if and only if $\Gamma_{k}$ acts trivially on $X_{*}(T)$. 
\item For a $k_0$-torus $T$, we denote by $T^{\spl}$ the maximal split subtorus of $T$. Then we have
\begin{equation*}
X_{*}(T^{\spl})=X_{*}(T)^{\Gamma_{k_0}}. 
\end{equation*}
\item For a finite separable extension $k$ of $k_0$ and a $k_0$-torus $T$, $X_{*}(T\otimes_{k_0}k)$ is the $\Z$-module $X_{*}(T)$ with the action of $\Gamma_{k}$ induced by that of $\Gamma_{k_0}$. 
\item For a finite separable extension $k$ of $k_0$ and a $k$-torus $T'$, we have
\begin{equation*}
X_{*}(\Res_{k/k_0}T')=\Ind^{\Gamma_{k_0}}_{\Gamma_{k}}X_{*}(T'). 
\end{equation*}
\end{enumerate}}
\end{prop}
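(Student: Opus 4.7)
The plan is to deduce the proposition from the classical anti-equivalence between $k_0$-tori and finite free $\Z$-modules with continuous $\Gamma_{k_0}$-action given by the character lattice $T\mapsto X^*(T):=\Hom_{k_0^{\sep}}(T_{k_0^{\sep}},\G_m)$. The key input is that over a separably closed field every torus is diagonalizable, hence isomorphic to $\G_m^n$, and $\Hom_{k_0^{\sep}}(\G_m^m,\G_m^n)$ is in natural bijection with $\mathrm{Mat}_{n\times m}(\Z)$. Together with Galois descent along $k_0\hookrightarrow k_0^{\sep}$, this gives the anti-equivalence at the level of characters. Dualizing via $X_*(T)=\Hom_\Z(X^*(T),\Z)$, which is a $\Gamma_{k_0}$-equivariant identification thanks to the canonical pairing $X_*(T)\times X^*(T)\rightarrow \End(\G_m)=\Z$, one obtains the stated covariant equivalence. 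The sign convention $\Ad(\id_T\otimes (\tau^{-1})^*)$ in the definition of the $\Gamma_{k_0}$-module structure on $X_*(T)$ matches the standard convention after this dualization.

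For the quasi-inverse, for any split torus $T$ and any $k_0^{\sep}$-algebra $R$ one has $T(R)=\Hom_{\mathrm{grp}}(X^*(T),R^\times)=X_*(T)\otimes_\Z R^\times$ naturally. Applied to $R=k_0^{\sep}$ this gives $T(k_0^{\sep})=X_*(T)\otimes_\Z k_0^{\sep,\times}$ as $\Gamma_{k_0}$-modules, whence $T(k_0)=(X_*(T)\otimes_\Z k_0^{\sep,\times})^{\Gamma_{k_0}}$, which is the formula in the statement (interpreted as the $k_0$-points of the torus attached to $M$; the full torus scheme is recovered in the same manner by replacing $k_0^{\sep,\times}$ by $(R\otimes_{k_0}k_0^{\sep})^\times$ functorially in $R$).

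For (i), $T$ splits over $k$ iff $T_k\cong \G_{m,k}^n$, and by the equivalence applied over $k$ combined with (iii), this happens iff $\Gamma_k$ acts trivially on $X_*(T)$. For (ii), $T^{\spl}$ corresponds under the equivalence to the largest $\Gamma_{k_0}$-sublattice of $X_*(T)$ on which the action is trivial; since $X_*(T)/X_*(T)^{\Gamma_{k_0}}$ is torsion-free, $X_*(T)^{\Gamma_{k_0}}$ is a direct summand and hence is the cocharacter lattice of an actual subtorus. Item (iii) is immediate from the construction, since base change corresponds to restricting the Galois action to the open subgroup $\Gamma_k\subset \Gamma_{k_0}$. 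For (iv), I would use the universal property of Weil restriction to write
\begin{equation*}
X_*(\Res_{k/k_0}T')=\Hom_{k_0^{\sep}}(\G_{m,k_0^{\sep}},(\Res_{k/k_0}T')\otimes_{k_0}k_0^{\sep})=\prod_{\iota\colon k\hookrightarrow k_0^{\sep}}\Hom_{k_0^{\sep}}(\G_{m,k_0^{\sep}},T'\otimes_{k,\iota}k_0^{\sep}),
\end{equation*}
and then recognize the right-hand side as $\Ind^{\Gamma_{k_0}}_{\Gamma_k}X_*(T')$ by identifying the set of embeddings $\iota$ with $\Gamma_{k_0}/\Gamma_k$ and invoking Frobenius reciprocity.

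The proof is essentially a bookkeeping exercise on top of standard material. The only subtle points, rather than genuine obstacles, are keeping track of the $\Gamma_{k_0}$-action conventions across the chain of dualizations (in particular matching the sign in $(\tau,c)\mapsto \Ad(\id_T\otimes(\tau^{-1})^*)(c)$), and checking in (ii) that $X_*(T)^{\Gamma_{k_0}}$ is a direct summand so that it corresponds to a genuine subtorus rather than only a closed subgroup scheme.
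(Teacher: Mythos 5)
Your proof is correct and is the standard argument; the paper itself states Proposition \ref{treq} without proof, treating it as classical background (the cocharacter-lattice form of the anti-equivalence between tori and Galois lattices, obtained by dualizing the character-lattice equivalence). One small inaccuracy worth flagging: in (iv) the final identification of $\prod_{\iota\colon k\hookrightarrow k_0^{\sep}}X_*(T'\otimes_{k,\iota}k_0^{\sep})$ with $\Ind^{\Gamma_{k_0}}_{\Gamma_k}X_*(T')$ is a direct computation using the transitive $\Gamma_{k_0}$-action on the set of embeddings together with the finite-index identification $\Ind=\mathrm{coInd}$, not an appeal to Frobenius reciprocity (which is the adjunction the paper invokes separately in Proposition \ref{frrp}); this does not affect the correctness of the result.
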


We will also use the following: 

\begin{prop}\label{frrp}
\emph{Let $k/k_0$ be a finite separable extension, $T_0$ a $k_0$-torus and $T$ a $k$-torus. Then there is an isomorphism
\begin{equation*}
\Hom_{k_0}(\Res_{k/k_0}T,T_0)\xrightarrow{\cong} \Hom_{k}(T,T_0\otimes_{k_0}k),
\end{equation*}
which is functorial with respect to $T_0$ and $T$. }
\end{prop}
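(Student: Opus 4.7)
The plan is to reduce the claim to the standard Frobenius reciprocity for induced modules over profinite groups, using the equivalence of categories between tori over $k_0$ and $\Gamma_{k_0}$-lattices recorded in Proposition \ref{treq}.

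First I would translate both sides of the asserted isomorphism to the cocharacter side. By Proposition \ref{treq}, the functor $X_{*}$ is fully faithful, so
\begin{equation*}
\Hom_{k_0}(\Res_{k/k_0}T, T_0) = \Hom_{\Gamma_{k_0}}\bigl(X_{*}(\Res_{k/k_0}T),\, X_{*}(T_0)\bigr),
\qquad
\Hom_{k}(T, T_0\otimes_{k_0}k) = \Hom_{\Gamma_{k}}\bigl(X_{*}(T),\, X_{*}(T_0\otimes_{k_0}k)\bigr).
\end{equation*}
Next, I would apply Proposition \ref{treq}(iv) to identify $X_{*}(\Res_{k/k_0}T)$ with the induced module $\Ind_{\Gamma_{k}}^{\Gamma_{k_0}}X_{*}(T)$, and Proposition \ref{treq}(iii) to identify $X_{*}(T_0\otimes_{k_0}k)$ with $X_{*}(T_0)$ viewed as a $\Gamma_k$-module via restriction along $\Gamma_k\hookrightarrow \Gamma_{k_0}$.

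The heart of the proof is then just Frobenius reciprocity: for any $\Gamma_k$-module $M$ and any $\Gamma_{k_0}$-module $N$, there is a canonical isomorphism
\begin{equation*}
\Hom_{\Gamma_{k_0}}\bigl(\Ind_{\Gamma_{k}}^{\Gamma_{k_0}}M,\, N\bigr) \xrightarrow{\cong} \Hom_{\Gamma_{k}}(M,\, N),
\end{equation*}
given by evaluating at the distinguished generator $1\otimes m$; the inverse sends a $\Gamma_k$-equivariant map $f\colon M\to N$ to the $\Gamma_{k_0}$-equivariant map $\tau\otimes m\mapsto \tau\cdot f(m)$. Applying this with $M=X_{*}(T)$ and $N=X_{*}(T_0)$ yields the desired bijection. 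Functoriality in $T$ and $T_0$ is immediate from functoriality of each of the three steps: the equivalence $T\mapsto X_{*}(T)$, the identifications in (iii) and (iv), and the Frobenius reciprocity map.

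I do not expect any real obstacle here; the statement is a formal adjunction, and once Proposition \ref{treq} is in hand the only content is bookkeeping the identifications. The one small point worth checking carefully is that the canonical isomorphism one obtains on cocharacter lattices indeed comes from a morphism of tori over the appropriate base — but this is guaranteed by the fact that $X_{*}$ is an equivalence, so every $\Gamma$-equivariant map of lattices arises from a unique morphism of tori.
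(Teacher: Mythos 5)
Your proof is correct and follows exactly the route the paper itself takes: translate via the equivalence $T\mapsto X_{*}(T)$ of Proposition \ref{treq}, use parts (iii) and (iv) to rewrite both sides as $\Gamma$-module Hom-sets, and invoke Frobenius reciprocity for the induced module $\Ind_{\Gamma_k}^{\Gamma_{k_0}}X_{*}(T)$. The paper states this in one line; you have simply spelled out the same adjunction, including the explicit unit/counit maps and the observation that full faithfulness of $X_{*}$ lets you descend the lattice-level isomorphism back to tori.
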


\begin{proof}
This follows from Proposition \ref{treq} and the Frobenius reciprocity law. 
\end{proof}

\begin{dfn}\label{tkkd}
Let $k^{+}$ be a finite {\'e}tale algebra over $k_0$, and $k$ a separable quadratic extension of $k^{+}$. Then we define $k$-tori
\begin{align*}
T_{k/k^{+},k_0}&:=\{t\in \Res_{k/k_0}\G_m\mid \N_{k/k^{+}}(t)\in \G_m\},\\
T_{k/k^{+},k_0}^{1}&:=\{t\in \Res_{k/k_0}\G_m\mid \N_{k/k^{+}}(t)=1\}. 
\end{align*}
\end{dfn}

\begin{prop}\label{trid}
\emph{
\begin{enumerate}
\item There is the following commutative diagram
\begin{equation*}
\xymatrix{
1\ar[r]& T_{k/k^{+},k_0}^{1}\ar[r]\ar@{=}[d]& T_{k/k^{+},k_0}\ar[r]^{\nu_{k/k^{+},k_0}} \ar[d]& \G_m \ar[r] \ar@{^{(}->}[d]& 1\\
1\ar[r]& T_{k/k^{+},k_0}^{1}\ar[r]& \Res_{k/k_0}\G_m \ar[r]^{\N_{k/k^{+}}\hspace{1.2mm}}& \Res_{k^{+}/k_{0}}\G_m \ar[r]& 1. }
\end{equation*}
Moreover, both the horizontal sequences are exact. 
\item If $k^{+}$ is a field and $k=k^{+}\times k^{+}$, then there is an isomorphism
\begin{equation*}
T_{k/k^{+},k_0}\cong \G_m \times \Res_{k^{+}/k_0}\G_m, 
\end{equation*}
and $\nu_{k/k^{+},k_0}$ is given by the first projection. In particular, $T_{k/k^{+},k_0}$ is induced over $k_0$. 
\item Let $k^{+}=k_1^{+}\times \cdots \times k_r^{+}$ and $k=k_1\times \cdots \times k_r$, where $k_i^{+}$ is a field and $k_i$ is an {\'e}tale quadratic algebra over $k_i^{+}$ for any $i$. Then the following diagram is Cartesian: 
\begin{equation*}
\xymatrix@C=46pt{
T_{k/k^{+},k_0} \ar[r]^{\nu_{k/k^{+},k_0}} \ar[d]^{\diag_0} & \G_m \ar[d]^{\diag}\\
\prod_{i=1}^{r}T_{k_i/k_i^{+},k_0} \ar[r]^{\hspace{4mm}(\nu_{k_i/k_i^{+},k_0})_{i}} & \prod_{n}\G_{m}. }
\end{equation*}
Here $\diag_0$ is the canonical injection and $\diag$ is the diagonal map. 
\end{enumerate}}
\end{prop}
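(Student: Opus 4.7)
The plan is to verify each of the three parts directly from the definitions, using functoriality of Weil restriction and, where convenient, the equivalence with cocharacter lattices supplied by Proposition \ref{treq}.

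For part (i), first I would observe that the bottom row is tautological: $T^{1}_{k/k^{+},k_0}$ is by definition the kernel of $\N_{k/k^{+}}$. The only genuine content is right-exactness, i.e., surjectivity of $\N_{k/k^{+}} \colon \Res_{k/k_0}\G_m \to \Res_{k^{+}/k_0}\G_m$. This I would check on cocharacter lattices using Proposition \ref{treq}(iv): under the identifications $X_{*}(\Res_{k/k_0}\G_m)=\Ind^{\Gamma_{k_0}}_{\Gamma_{k}}\Z$ and $X_{*}(\Res_{k^{+}/k_0}\G_m)=\Ind^{\Gamma_{k_0}}_{\Gamma_{k^{+}}}\Z$, the induced map is the canonical projection $\Z[\Gamma_{k_0}/\Gamma_{k}]\twoheadrightarrow \Z[\Gamma_{k_0}/\Gamma_{k^{+}}]$, which is visibly surjective. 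Commutativity of the diagram is immediate from the definitions, and exactness of the top row then follows formally by taking the preimage of $\G_m\hookrightarrow \Res_{k^{+}/k_0}\G_m$ under $\N_{k/k^{+}}$: the kernels coincide, and surjectivity of $\nu_{k/k^{+},k_0}$ is inherited since every element of $\G_m\subset \Res_{k^{+}/k_0}\G_m$ is already hit by a preimage lying in the diagonal copy of $\G_m$ inside $\Res_{k/k_0}\G_m$.

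For part (ii), I would use the canonical decomposition $\Res_{(k^{+}\times k^{+})/k_0}\G_m \cong \Res_{k^{+}/k_0}\G_m \times \Res_{k^{+}/k_0}\G_m$, under which the norm $\N_{k/k^{+}}$ becomes the multiplication map $(a,b)\mapsto ab$. Hence $T_{k/k^{+},k_0}$ is the subscheme of pairs with $ab\in \G_m$, and the substitution $(a,b)\mapsto (ab,a)$ supplies an isomorphism $T_{k/k^{+},k_0}\xrightarrow{\cong}\G_m\times \Res_{k^{+}/k_0}\G_m$ under which $\nu_{k/k^{+},k_0}$ is the first projection. The induced property follows because $\Res_{k^{+}/k_0}\G_m$ is induced and the product of induced tori is induced. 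For part (iii), I would use that $\Res_{(-)/k_0}\G_m$ converts finite products of {\'e}tale algebras into products of tori, so that $\N_{k/k^{+}}$ factors componentwise as $(\N_{k_i/k_i^{+}})_i$. A point $(a_i)_i\in \prod_i \Res_{k_i/k_0}\G_m$ then lies in $T_{k/k^{+},k_0}$ if and only if each $\N_{k_i/k_i^{+}}(a_i)$ lies in the $\G_m$-factor inside $\Res_{k_i^{+}/k_0}\G_m$ \emph{and} all of these values agree as a single element of the diagonal $\G_m$. The first condition reads $(a_i)_i\in \prod_i T_{k_i/k_i^{+},k_0}$ and the second is exactly compatibility with $\diag$, which identifies $T_{k/k^{+},k_0}$ with the claimed fiber product.

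The only step requiring any real care — and the one I would expect to be the main obstacle — is the surjectivity of $\N_{k/k^{+}}$ in (i); once that is in hand, the remainder of the proposition is a direct unwinding of definitions.
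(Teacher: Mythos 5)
Your proof is correct and follows essentially the same definitional unwinding the paper intends, with the same decomposition $\Res_{(k^{+}\times k^{+})/k_0}\G_m\cong(\Res_{k^{+}/k_0}\G_m)^{2}$ and the multiplication map $(a,b)\mapsto ab$ for part (ii). One small imprecision in (i): the restriction of $\N_{k/k^{+}}$ to the diagonal copy of $\G_m$ inside $\Res_{k/k_0}\G_m$ is $t\mapsto t^{2}$, so it is surjective as a morphism of tori (equivalently on $\overline{k}_0$-points or on cocharacter lattices) but not on $R$-points for arbitrary $R$; your cocharacter-lattice argument already settles this without appealing to the diagonal.
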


\begin{proof}
(i): This follows by definition. 

(ii): By assumption, the diagram
\begin{equation*}
\xymatrix{
\Res_{k/k_0}\G_m\ar[rd]_{\N_{k/k^{+}}}\ar[r]^{\cong\hspace{5mm}} & (\Res_{k^{+}/k_0}\G_m)^{2} \ar[d]^{(t,t')\mapsto tt'} \\
& \Res_{k^{+}/k_0}\G_m }
\end{equation*}
is commutative. Hence the assertion follows from this diagram and (i). 

(iii): This follows by the definitions of $T_{k/k^{+},k_0}$ and $T_{k_i/k_i^{+},k_0}$. 
\end{proof}

\subsection{Coflasque resolutions of tori and $R$-equivalence}\label{cofl}

\begin{dfn}
Let $T$ be a torus over $k_0$. 
\begin{itemize}
\item We say that $T$ is \emph{induced} over $k_0$ if there is an isomorphism
\begin{equation*}
T\cong \Res_{k/k_0}\G_m
\end{equation*}
for some finite {\'e}tale algebra $k$ over $k_0$. 
\item We say that $T$ is \emph{coflasque} over $k_0$ if $H^{1}(k,X_{*}(T))=0$ for any finite extension $k$ of $k_0$ (here $H^{1}$ is the first Galois cohomology). 
\end{itemize}
\end{dfn}

\begin{dfn}
A \emph{coflasque resolution} of a torus $T$ over $k_0$ is an exact sequence of tori over $k_0$
\begin{equation*}
1\rightarrow F\rightarrow P\rightarrow T\rightarrow 1,
\end{equation*}
which remains exact after taking $X_{*}(\cdot)$ such that $F$ and $P$ are coflasque and induced over $k_0$ respectively. 
\end{dfn}
In the following, we give a typical example of coflasque resolution of $T$. Take a splitting field $k$ of $T$ which is finite Galois over $k_0$. For a subextension $k'/k_0$ of $k/k_0$, we denote by $T_{k'}^{\spl}$ the maximal split torus of the $k'$-torus $T_{k'}$. Now we set
\begin{equation*}
P(T):=\prod_{k_0\subset k'\subset k}\Res_{k'/k_0}T_{k'}^{\spl}. 
\end{equation*}
By definition, $P(T)$ is induced over $k_0$. Moreover, let $P(T)\rightarrow T$ be the direct sum of the morphisms $\Res_{k'/k}T_{k'}^{\spl}\rightarrow T$ induced by the inclusion $T_{k'}^{\spl}\hookrightarrow T_{k'}$ under Proposition \ref{frrp} for all $k'$, and $F(T)$ the kernel of $P(T)\rightarrow T$. Then $F(T)$ is coflasque over $k_0$ by construction, and hence the exact sequence
\begin{equation*}
1\rightarrow F(T)\rightarrow P(T)\rightarrow T\rightarrow 1
\end{equation*}
gives a coflasque resolution of $T$. 

A description of the above sequence by means of Galois modules is as follows. Let $G:=\Gal(k/k_0)$. For a subgroup $H$ of $G$, 
\begin{equation*}
X_{*}(P(T)):=\bigoplus_{H<G}X_{*}(T)^{H},
\end{equation*}
and let $\nu_{*}$ be the induced map. Then $F_{*}:=\Ker(\nu_{*})$ is coflasque, and hence

\begin{dfn}
We define a subgroup $RT(k_0)$ of $T(k_0)$ as the image of $P(T)(k_0)\rightarrow T(k_0)$. 
\end{dfn}
An element of $RT(k_0)$ is said to be ``$R$-equivalent to $1$''. It is known that $RT(k_0)$ is independent of the choice of the coflasque resolution of $T$. See \cite[\S 5, Th{\'e}or{\`e}me 2]{ColliotThelene1977}. 

The following will be used in Section \ref{reqp}. 

\begin{prop}\label{crtw}
\emph{We have $RT(k_0)=T(k_0)$ if one of the following hold: 
\begin{enumerate}
\item $T$ is induced over $k_0$,
\item $T$ splits over a cyclic extension of $k_0$. 
\end{enumerate}}
\end{prop}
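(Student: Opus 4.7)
My plan is to identify $T(k_0)/RT(k_0)$ with $H^1(k_0, F)$ for any coflasque resolution $1 \to F \to P \to T \to 1$, and then to show this $H^1$ vanishes in each of the two cases. Since $P$ is induced, Shapiro's lemma and Hilbert 90 give $H^1(k_0, P) = 0$, so the long exact sequence in Galois cohomology yields
\[
T(k_0)/\Ima(P(k_0) \to T(k_0)) \cong H^1(k_0, F).
\]
Because $RT(k_0)$ is independent of the choice of coflasque resolution (as recalled just before the proposition), I am free to pick whichever resolution is most convenient in each case.

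For (i), if $T$ is already induced, I would use the trivial coflasque resolution $1 \to 1 \to T \xrightarrow{\id} T \to 1$, in which $F = 1$ and the claim is immediate.

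For (ii), I would fix a cyclic Galois splitting field $k/k_0$ of $T$ and work with the canonical coflasque resolution built from this $k$. Set $G := \Gal(k/k_0)$. Since both $P$ and $T$ split over $k$, so does $F$, and the $\Gamma_{k_0}$-action on $X_*(F)$ factors through $G$. Applying the coflasque hypothesis to each intermediate field $k^H$ ($H \le G$) and combining with inflation--restriction, one obtains $H^1(H, X_*(F)) = 0$ for every subgroup $H \le G$. As each such $H$ is cyclic, Tate cohomology is $2$-periodic, whence $\widehat{H}^{-1}(H, X_*(F)) = 0$ as well; in other words, $X_*(F)$ is flasque in the classical sense as a $G$-module. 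By the Endo--Miyata theorem, a flasque module over a cyclic group is \emph{invertible}, i.e.\ a direct summand of a permutation module; translating back to tori, $F$ becomes a direct factor of an induced torus $P'$, so
\[
H^1(k_0, F) \hookrightarrow H^1(k_0, P') = 0
\]
by Hilbert 90 and Shapiro once more.

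The main obstacle is thus the Endo--Miyata input; everything else is formal Galois cohomology. A purely local alternative bypassing Endo--Miyata, and more in the spirit of the applications of this proposition at $k_0 = \Qp$ later in the paper, is to apply Tate--Nakayama directly: one then has $H^1(G, F(k)) \cong \widehat{H}^{-1}(G, X_*(F))$, and the $2$-periodicity for cyclic $G$ reduces matters to the coflasque vanishing $H^1(G, X_*(F)) = 0$.
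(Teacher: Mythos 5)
Your proof plan is correct and follows essentially the same route as the paper, which treats (i) as an immediate consequence of the definition of $RT(k_0)$ and proves (ii) by citing Endo--Miyata and Colliot-Th\'el\`ene--Suresh rather than writing out the argument. Your expansion via the identification $T(k_0)/RT(k_0)\cong H^1(k_0,F)$, the $2$-periodicity of Tate cohomology for cyclic groups (so coflasque $=$ flasque), and the Endo--Miyata invertibility of flasque lattices over a cyclic group is exactly the content of those references.
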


\begin{proof}
(i): This follows from the definition of $RT(k_0)=T(k_0)$. 

(ii): This is essentially obtained by \cite{Endo1974}. See also \cite[\S 1, p.276]{ColliotThelene2007}. 
\end{proof}

In the following, we analyze the homomorphisms
\begin{equation*}
P(T_{k/k^{+},k_0})\rightarrow T_{k/k^{+},k_0},\quad P(T_{k/k^{+},k_0}^{1})\rightarrow T_{k/k^{+},k_0}^{1}
\end{equation*}
and their images on $k_0$-valued points, that is, $RT_{k/k^{+},k_0}(k_0)$ and $RT_{k/k^{+},k_0}^{1}(k_0)$. 

\begin{prop}\label{bmrt}
\emph{The subgroup $RT_{k/k^{+},k_0}(k_0)$ contains $k_0^{\times}$. }
\end{prop}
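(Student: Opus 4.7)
The plan is to exhibit $k_0^\times$ as the image on $k_0$-points of the $k' = k_0$ summand of the coflasque resolution $P(T_{k/k^+,k_0}) \to T_{k/k^+,k_0}$ constructed just above.

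First I would check that the scalar diagonal $\iota \colon \G_m \to \Res_{k/k_0}\G_m$, sending $a \in k_0^\times$ to its image in $k^\times$, factors through $T_{k/k^+,k_0}$. For $a \in k_0^\times$, the nontrivial $k^+$-involution $\sigma$ of $k$ fixes $a$, so $\N_{k/k^+}(a) = a\,\sigma(a) = a^2$, which lies in the diagonal subgroup $\G_m \subset \Res_{k^+/k_0}\G_m$ appearing in Proposition \ref{trid}(i). Hence $\iota$ factors through $T_{k/k^+,k_0}$, and since $\G_m$ is split over $k_0$ its image is contained in the maximal $k_0$-split subtorus $S$ of $T_{k/k^+,k_0}$.

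Next, the summand of $P(T_{k/k^+,k_0})$ indexed by $k' = k_0$ in the construction is $\Res_{k_0/k_0} S = S$, and by construction the corresponding component of $P(T_{k/k^+,k_0}) \to T_{k/k^+,k_0}$ is the natural inclusion $S \hookrightarrow T_{k/k^+,k_0}$. Consequently $\iota$ factors as $\G_m \hookrightarrow S \hookrightarrow P(T_{k/k^+,k_0}) \to T_{k/k^+,k_0}$, so on $k_0$-points the subgroup $k_0^\times = \iota(\G_m(k_0))$ lies in the image $RT_{k/k^+,k_0}(k_0)$, which is what we want.

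The argument is essentially bookkeeping within the definition of $P(T_{k/k^+,k_0})$; the only step requiring a moment of attention is the norm identity $\N_{k/k^+}(a) = a^2$ for scalars $a \in k_0^\times$, so I do not expect any serious obstacle.
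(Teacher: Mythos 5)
Your argument is correct and is essentially the same as the paper's one-line proof, just spelled out in more detail: the paper asserts that the canonical inclusion $\G_m\hookrightarrow T_{k/k^{+},k_0}$ is a direct summand of $P(T_{k/k^{+},k_0})\rightarrow T_{k/k^{+},k_0}$, and you make this explicit by locating $\G_m$ inside the $k'=k_0$ summand $T_{k_0}^{\spl}$ of the coflasque resolution and verifying via the norm identity $\N_{k/k^{+}}(a)=a^{2}$ that the scalar copy of $\G_m$ indeed lands in $T_{k/k^{+},k_0}$.
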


\begin{proof}
This follows from that the canonical homomorphism $\G_m\hookrightarrow T_{k/k^{+},k_0}$ is a direct summand of the homomorphism $P(T_{k/k^{+},k_0})\rightarrow T_{k/k^{+},k_0}$. 
\end{proof}

\begin{prop}\label{n1rt}
\emph{Let $k^{+}=\prod_{r}k_1^{+}$ and $k=\prod_{r}k_1$, where $k_1^{+}$ is a field and $k_1$ is an {\'e}tale quadratic algebra over $k_1^{+}$. 
\begin{enumerate}
\item The following diagram is Cartesian: 
\begin{equation*}
\xymatrix{
RT_{k/k^{+},k_0}(k_0)\ar[r]^{\diag_0\hspace{4mm}}\ar[d]_{\nu_{k/k^{+},F_0}}&\prod_{n}RT_{k_1/k_1^{+},k_0}(k_0)\ar[d]^{\prod_{n}\nu_{k_1/k_1^{+},k_0}}\\
k_0^{\times}\ar[r]^{\diag\hspace{4mm}}&\prod_{n}k_0^{\times}.}
\end{equation*}
Here $\diag$ and $\diag_{0}$ are homomorphisms as in Proposition \ref{trid} (iii). 
\item The composite
\begin{equation*}
P(T_{k/k^{+},k_0}^{1})\rightarrow T_{k/k^{+},k_0}^{1}\hookrightarrow T_{k/k^{+},k_0}
\end{equation*}
is a direct summand of $P(T_{k/k^{+},k_0})\rightarrow T_{k/k^{+},k_0}$. 
\item If both $k_1^{+}/k_0$ and $k_1/k_0$ are Galois, then we have
\begin{equation*}
RT_{k/k^{+},k_0}^{1}(k_0)=T_{k/k^{+},k_0}^{1}(k_0)\subset RT_{k/k^{+},k_0}(k_0). 
\end{equation*}
\end{enumerate}}
\end{prop}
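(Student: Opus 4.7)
The plan is to prove the three parts in the order (ii), (iii), (i), since the later parts rely on the structures established in the earlier ones. For (ii), I apply functoriality of $T \mapsto P(T)$ to the inclusion $T^1_{k/k^+,k_0} \hookrightarrow T_{k/k^+,k_0}$, obtaining a $k_0$-morphism $P(T^1_{k/k^+,k_0}) \to P(T_{k/k^+,k_0})$ compatible with the canonical surjections onto the respective tori. By the product structure of $P$, it suffices to show that for each subgroup $H$ of a common splitting Galois group $G$, the induced inclusion $X_*(T^1_{k/k^+,k_0})^H \hookrightarrow X_*(T_{k/k^+,k_0})^H$ is split as abelian groups. Taking $H$-invariants of the short exact sequence $0 \to X_*(T^1_{k/k^+,k_0}) \to X_*(T_{k/k^+,k_0}) \to \Z \to 0$ from Proposition \ref{trid}(i) yields a cokernel that is a subgroup of $\Z$, hence free, supplying the splitting; inducing up to $G$ preserves direct summands. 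Functoriality then ensures the composite $P(T^1_{k/k^+,k_0}) \to P(T_{k/k^+,k_0}) \to T_{k/k^+,k_0}$ factors through $T^1_{k/k^+,k_0}$, giving the required direct summand decomposition.

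For (iii), the inclusion $RT^1_{k/k^+,k_0}(k_0) \subset RT_{k/k^+,k_0}(k_0)$ is immediate from (ii). For the equality $RT^1_{k/k^+,k_0}(k_0) = T^1_{k/k^+,k_0}(k_0)$, the isomorphism $T^1_{k/k^+,k_0} \cong \prod_r T^1_{k_1/k_1^+,k_0}$ together with the multiplicativity of $R$-equivalence under products reduces the claim to the case $r=1$. If $k_1 = k_1^+ \times k_1^+$, then $T^1_{k_1/k_1^+,k_0} \cong \Res_{k_1^+/k_0}\G_m$ is induced by Proposition \ref{trid}(ii), so Proposition \ref{crtw}(i) applies. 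Otherwise $k = k_1$ is a field, and I consider the $k_0$-morphism
\[
\phi \colon \Res_{k/k_0}\G_m \to T^1_{k/k^+,k_0}, \qquad s \mapsto \sigma(s)/s,
\]
where $\sigma$ is the non-trivial element of $\Gal(k/k^+)$ (well-defined over $k_0$ since $\sigma$ fixes $k^+ \supset k_0$, and landing in $T^1$ because $\N_{k/k^+}(\sigma(s)/s) = 1$). Hilbert 90 for the cyclic quadratic extension $k/k^+$ gives surjectivity of $\phi$ on $k_0$-points; since $\Res_{k/k_0}\G_m$ is $k_0$-rational, every $k_0$-point is $R$-equivalent to the identity in the source, so the image of $\phi(k_0)$ lies in the $R$-equivalence class of $1$, yielding $T^1_{k/k^+,k_0}(k_0) \subset RT^1_{k/k^+,k_0}(k_0)$.

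For (i), the Cartesian structure at the level of full torus points is Proposition \ref{trid}(iii), and the $\subset$ direction for $R$-equivalence subgroups follows from functoriality applied to $\diag_0$ and $\nu_{k/k^+,k_0}$. For the $\supset$ direction, given $(s_i)_i \in \prod_n RT_{k_1/k_1^+,k_0}(k_0)$ with common norm $t \in k_0^\times$, I decompose each $s_i = u_i v_i$ using (ii), with $u_i \in RT^1_{k_1/k_1^+,k_0}(k_0) = T^1_{k_1/k_1^+,k_0}(k_0)$ (by (iii)) and $v_i$ in the image of the complementary summand $Q_i \to T_{k_1/k_1^+,k_0}$. The tuple $u := (u_i)$ assembles to an element of $T^1_{k/k^+,k_0}(k_0) = RT^1_{k/k^+,k_0}(k_0) \subset RT_{k/k^+,k_0}(k_0)$ by (iii), reducing the problem to showing that the complementary part $v := (v_i)$ lies in $RT_{k/k^+,k_0}(k_0)$. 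The main obstacle will be this last lifting: although each $v_i$ individually comes from $Q_i(k_0)$, producing a common preimage in $P(T_{k/k^+,k_0})(k_0)$ mapping to the tuple $(v_i)$ is not immediate. I plan to handle it by comparing the canonical resolution $P(T_{k/k^+,k_0})$ with the auxiliary resolution $\bigl(\prod_n P(T_{k_1/k_1^+,k_0})\bigr) \times_{\prod \G_m} \G_m$ (whose kernel $\prod_n F(T_{k_1/k_1^+,k_0})$ is coflasque) and invoking the independence of $R$-equivalence from the choice of coflasque resolution, as in \cite[\S 5, Th{\'e}or{\`e}me 2]{ColliotThelene1977}.
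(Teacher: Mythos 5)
Parts (ii) and (iii) of your proposal are correct. For (ii), your argument via $H$-invariants of the exact sequence $0\to X_*(T^1)\to X_*(T)\to\Z\to 0$ and freeness of the cokernel is essentially what the paper leaves implicit when it says the claim follows from the definitions. For (iii), you take a genuinely different route from the paper: the paper exhibits the explicit factor morphism $\psi_i\colon\Res_{k_1/k_0}\G_m\to T_{k/k^+,k_0}$, $x\mapsto(\delta_{ij}x\tau(x)^{-1})_j$, as a direct summand of $P(T^1_{k/k^+,k_0})\to T^1_{k/k^+,k_0}$ and applies Hilbert 90 to it, whereas you build a surjection $\Res_{k/k_0}\G_m\to T^1_{k/k^+,k_0}$, $s\mapsto\sigma(s)/s$, and invoke the standard fact that a morphism from an induced torus lifts through any coflasque resolution (because $\mathrm{Ext}^1(\mathrm{Ind}\,\Z,X_*(F))\cong H^1(H,X_*(F))=0$). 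Both are valid; yours is softer and avoids the direct-summand verification.

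Part (i), however, has genuine problems. First, a minor but real slip: you invoke (iii) to claim $u_i\in RT^1_{k_1/k_1^+,k_0}(k_0)=T^1_{k_1/k_1^+,k_0}(k_0)$, but (i) carries no hypothesis that $k_1^+/k_0$ and $k_1/k_0$ are Galois, so (iii) is not available here. This particular use is actually dispensable, since (ii) already gives $u_i\in RT^1_{k_1/k_1^+,k_0}(k_0)$ and multiplicativity of $R$-equivalence under products gives $(u_i)_i\in RT^1_{k/k^+,k_0}(k_0)\subset RT_{k/k^+,k_0}(k_0)$; still, the dependence as written is not legitimate. Second, and more seriously, you explicitly acknowledge that the step of lifting the tuple $(v_i)_i$ is an unresolved ``obstacle'' and only sketch a plan. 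That plan does not close the gap: the auxiliary torus $\bigl(\prod_n P(T_{k_1/k_1^+,k_0})\bigr)\times_{\prod_n\G_m}\G_m$ is a fiber product of induced tori over a split torus and is not obviously induced (quasi-trivial), so the uniqueness theorem for coflasque resolutions you cite does not apply to it. The paper's actual argument for (i) is different and avoids this difficulty: it checks that for each intermediate field $k'$ the diagram of maximal split subtori $(T_{k/k^+,k_0})^{\mathrm{spl}}_{k'}\to\prod_n(T_{k_1/k_1^+,k_0})^{\mathrm{spl}}_{k'}$ over $\G_{m,k'}\to\prod_n\G_{m,k'}$ is Cartesian, and then transports the Cartesian structure through the canonical resolutions $P(-)=\prod_{k'}\Res_{k'/k_0}(-)^{\mathrm{spl}}_{k'}$ built from a common splitting field. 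You should replace your plan for (i) with this per-$k'$ Cartesian argument, or otherwise supply the lifting whose absence you already flagged.
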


\begin{proof}
(i): Take a splitting field $\widetilde{k}$ of $T_{k/k^{+},k_0}$ which is finite Galois over $k_0$. Then the diagram
\begin{equation*}
\xymatrix{
(T_{k/k^{+},k_0})_{k'}^{\spl}\ar[r]^{\diag_0\hspace{4mm}}\ar[d]_{\nu_{k/k^{+},k_0}}&\prod_{n}(T_{k_1/k_1^{+},k_0})_{k'}^{\spl}\ar[d]^{\prod_{n}\nu_{k_1/k_1^{+},k_0}}\\
\G_{m,k'}\ar[r]^{\diag\hspace{4mm}}&\prod_{n}\G_{m,k'}}
\end{equation*}
is Cartesian for any subextension $k'/k_0$ of $\widetilde{k}/k_0$. Hence we obtain the desired diagram. 

(ii): The assertion is a consequence of the definitions of $P(T_{k/k^{+},k_0}^{1})\rightarrow T_{k/k^{+},k_0}^{1}$ and $P(T_{k/k^{+},k_0})\rightarrow T_{k/k^{+},k_0}$. 

(iii): Write $\Gal(k/k_0)=\{\id,\tau\}$. By hypothesis, the homomorphism
\begin{equation*}
\psi_{i}\colon \Res_{k_1/k_0}\G_m\rightarrow T_{k/k^{+},k_0};x\mapsto (\delta_{ij}x\tau(x)^{-1})_{1\leq j\leq r}
\end{equation*}
is a direct summand of the homomorphisms $P(T_{k/k^{+},k_0}^{1})\rightarrow T_{k/k^{+},k_0}^{1}$ and $P(T_{k/k^{+},k_0})\rightarrow T_{k/k^{+},k_0}$ for any $i\in \{1,\ldots,r\}$. Hence the assertion follows from the Hilbert satz 90. 
\end{proof}

\section{$R$-equivalence on tori over non-archimedean local fields}\label{reqp}

Throughout this section, let $F_0$ be a non-archimedean local field, $F^{+}$ a finite {\'e}tale algebra over $F_0$ and $F$ an {\'e}tale quadratic algebra over $F^{+}$. 

\subsection{Abelian extensions of non-archimedean local fields}\label{abex}

Here we assume that both $F/F_0$ is an abelian extension of a power of $2$, and set $G:=\Gal(F/F_0)$. We denote by $I\subset G$ the inertia group of $F/F_0$, and write $F^{\ur}/F_0$ for the subextension of $F/F_0$ corresponding to $I$. Note that $F^{\ur}$ is the maximal unramified subextension of $F/F_0$. Write $[F^{\ur}:F_0]=2^{n}$ where $n\in \Znn$. 

\begin{prop}\label{abna}
\emph{Suppose that $I$ is cyclic of order $2^{m}$, where $m\in \Znn$. Then $F/F_0$ satisfies one of the following: 
\begin{enumerate}
\item $G\cong \Z/2^{m}\times \Z/2^{n}$ which induces $I\cong \langle(1,0)\rangle$, 
\item $G\cong \Z/2^{u}\times \Z/2^{m+n-u}$ which induces $I\cong \langle(1,2^{n-u})\rangle$, where $0\leq u<\min\{m,n\}$. 
\end{enumerate}}
\end{prop}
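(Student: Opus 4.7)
The plan is to reduce the proposition to a purely group-theoretic classification and then argue by a standard extension calculation. Since $F^{\ur}/F_0$ is the maximal unramified subextension, $G/I \cong \Gal(F^{\ur}/F_0)$ is cyclic of order $2^n$, generated by a lift of Frobenius. Combined with the hypothesis that $I$ is cyclic of order $2^m$ and $G$ is abelian, the proposition reduces to the statement that any abelian extension
\[
1 \to \Z/2^m \to G \to \Z/2^n \to 1
\]
is isomorphic, together with the distinguished subgroup $\Z/2^m$, to one of the two shapes listed.

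To classify such pairs, I would fix a generator $i_0 \in I$ of order $2^m$ and a lift $g \in G$ of a generator of $G/I$. Then $2^n g = \alpha\, i_0$ for a uniquely determined $\alpha \in \Z/2^m$; the remaining freedom $g \mapsto g + \gamma\, i_0$ replaces $\alpha$ by $\alpha + 2^n \gamma$, so the class of $\alpha$ in $\Z/2^{\min(m,n)}$ is a well-defined invariant of $(G, I)$ (this is the classical computation $\mathrm{Ext}^{1}(\Z/2^n, \Z/2^m) \cong \Z/2^{\min(m,n)}$). If this invariant vanishes, one may choose $g$ with $2^n g = 0$, so that $g$ and $i_0$ generate independent cyclic summands; this gives $G \cong \Z/2^n \oplus \Z/2^m$ with $I$ equal to the $\Z/2^m$-summand, and swapping the two factors produces case (i).

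If the invariant is nonzero, optimize $g$ so that $\alpha = 2^u \beta$ with $\beta$ odd and $0 \leq u < \min(m,n)$. Then $g$ has order $2^{n}\cdot 2^{m-u} = 2^{m+n-u}$, and since $G$ is abelian of order $2^{m+n}$ containing the cyclic subgroup $\langle g \rangle$ of this order, the invariant factor theorem yields $G \cong \Z/2^u \oplus \Z/2^{m+n-u}$ with $\langle g \rangle$ equal to the second summand. Picking a generator of a complementary subgroup of order $2^u$ and solving the relation $2^n g = 2^u \beta\, i_0$ in coordinates expresses $i_0$ as $(a, 2^{n-u} \beta^{-1})$ for some $a \in \Z/2^u$; absorbing $\beta^{-1}$ into the generator of the second factor and $a$ into the generator of the first (via an automorphism of $G$) yields $I = \langle (1, 2^{n-u}) \rangle$, which is case (ii).

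The main obstacle is the last step: checking that the first coordinate $a$ of $i_0$ is actually a unit in $\Z/2^u$, which is precisely the input needed to guarantee that $G/I$ is cyclic of order $2^n$ (and would fail—producing a non-cyclic quotient $\Z/2^e \oplus \Z/2^{n-e}$ for $e > 0$—otherwise), and performing the automorphism absorbing the units $a$ and $\beta^{-1}$ without disturbing the previously-fixed identification of $\langle g \rangle$ with the second summand. These are elementary manipulations in $\Z/2^u \oplus \Z/2^{m+n-u}$, but the bookkeeping requires some care to avoid circular dependencies among the choices.
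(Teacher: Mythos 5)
Your proof is correct and reaches the same classification, but by a different route than the paper. The paper normalizes so that $\sigma^{2^n}=\tau^{2^u}$ (for a Frobenius lift $\sigma$ and a generator $\tau$ of $I$) and then \emph{explicitly} writes down generators of a direct-product decomposition --- for instance $\tau$ and $\sigma\tau^{-2^{u-n}}$ when $u\geq n$, or $\tau\sigma^{-2^{n-u}}$ and $\sigma$ when $u<\min(m,n)$ --- and checks the decomposition by hand. You instead identify $u$ as the $2$-adic valuation of the class of the extension $1\to I\to G\to G/I\to 1$ in $\mathrm{Ext}^1(\Z/2^n,\Z/2^m)\cong\Z/2^{\min(m,n)}$, and then invoke the structure theory of finite abelian $2$-groups. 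This is more conceptual: the case split (trivial vs.\ nontrivial class) is cleaner and the meaning of $u$ is transparent, whereas the paper's argument is more computational but entirely self-contained.

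Two points in your argument should be tightened. First, the inference ``$G$ is abelian of order $2^{m+n}$ containing $\langle g\rangle$ of order $2^{m+n-u}$, so the invariant factor theorem gives $G\cong\Z/2^u\oplus\Z/2^{m+n-u}$ with $\langle g\rangle$ the second summand'' is not a consequence of those two facts alone (e.g.\ $(\Z/2)^3$ contains a $\Z/2$ but is not $\Z/4\times\Z/2$). You must also use that $G=\langle i_0,g\rangle$ is $2$-generated and that $\mathrm{ord}(g)=2^{m+n-u}$ equals the exponent of $G$ --- the latter because $u<n$ gives $\mathrm{ord}(g)\geq 2^m=\mathrm{ord}(i_0)$ --- so that $\langle g\rangle$, being a maximal cyclic subgroup, is a direct summand with cyclic complement of order $2^u$; equivalently, compute the Smith normal form of the relation matrix $\begin{pmatrix}2^m & 0\\ -2^u\beta & 2^n\end{pmatrix}$. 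Second, for your flagged worry about whether the first coordinate $a$ of $i_0$ is a unit in $\Z/2^u$: the shortest argument is not via cyclicity of $G/I$ but simply to project $G=\langle i_0,g\rangle$ onto the $\Z/2^u$-factor; since $g$ lies in the complementary summand, the image of the projection is $\langle a\rangle$, which must be all of $\Z/2^u$, forcing $a$ to be a unit (when $u>0$; for $u=0$ there is nothing to absorb). With these details supplied, your argument is complete.
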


\begin{proof}
Take a generator $\tau$ of $I$. We may assume that there is $u\in \Z$ satisfying $0\leq u\leq m$ and $\sigma^{2^{n}}=\tau^{2^{u}}$ (recall the notation that $\sigma$ is a lift of Frobenius). 

\textbf{Case 1.~$n\leq u\leq m$ or $m=u$. }

In this case, (i) occurs under the following. 
\begin{itemize}
\item If $n\leq u\leq m$, then $(1,0)$ and $(0,1)$ correspond to $\tau$, and $\sigma \tau^{-2^{u_1-n}}$ respectively. 
\item If $u=m$, then $(1,0)$ and $(0,1)$ correspond to $\tau_1$ and $\sigma$ respectively. 
\end{itemize}

\textbf{Case 2.~$u<\min\{m,n\}$. }

In this case, (ii) occurs under that $(1,0)$ and $(0,1)$ correspond to $\tau \sigma^{-2^{n-u}}$ and $\tau$ respectively. 
\end{proof}

\begin{prop}\label{abc2}
\emph{Suppose $p=2$ and that there is an isomorphism $I\cong \Z/2^{m}\times \Z/2$, where $m\in \Zpn$. Then $F/F_0$ satisfies one of the following: 
\begin{enumerate}
\item $G\cong \Z/2^{m}\times \Z/2^{n}\times \Z/2$ which induces $I\cong \langle(1,0,0),(0,0,1)\rangle$, 
\item $G\cong \Z/2^{m}\times \Z/2^{n+1}$ which induces $I\cong \langle(1,0),(0,2^{n})\rangle$, 
\item $G\cong \Z/2^{u}\times \Z/2^{m+n-u}\times \Z/2$ which induces $I\cong \langle(1,2^{n-u},0),(0,0,1)\rangle$, where $0\leq u<\min\{m,n\}$,
\item $G\cong \Z/2^{u+1}\times \Z/2^{m+n-u}$ which induces $I\cong \langle(1,2^{n-u}),(2^{u},1)\rangle$, where $0\leq u<\min\{m,n\}$. 
\end{enumerate}}
\end{prop}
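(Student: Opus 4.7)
The approach mirrors the proof of Proposition \ref{abna}, the added complication being that $I$ now has two minimal generators rather than one. Fix generators $\tau_1,\tau_2$ of $I$ with $\tau_1$ of order $2^m$ and $\tau_2$ of order $2$, and choose a Frobenius lift $\sigma\in G$. Since $G/I$ is cyclic of order $2^n$ generated by the image of $\sigma$, we have $\sigma^{2^n}\in I$, so we may write
\begin{equation*}
\sigma^{2^n}=\tau_1^{a}\tau_2^{b},\qquad a\in \Z/2^m\Z,\ b\in \{0,1\}.
\end{equation*}

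The first step is to normalize $(a,b)$. Replacing $\sigma$ by $\sigma\tau_1^{c}\tau_2^{d}$ alters $\sigma^{2^n}$ by $\tau_1^{2^nc}\tau_2^{2^nd}$; since $\tau_2^{2^n}=1$ (as $n\geq 1$ in all nontrivial cases), this allows us to shift $a$ by any multiple of $2^n$ modulo $2^m$ while leaving $b$ untouched. Replacing $\tau_1$ by an odd power of itself then further reduces $a$ to either $0$ or $2^u$ with $0\leq u<\min\{m,n\}$. This produces four normal forms, matching the four cases of the statement.

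For the cases $a=0$, the cyclic subgroup $\langle \sigma\rangle$ is either complementary to $I$ (case (i): $\sigma^{2^n}=1$, giving $G=\langle\tau_1\rangle\times\langle\sigma\rangle\times\langle\tau_2\rangle\cong\Z/2^m\times\Z/2^n\times\Z/2$) or absorbs $\tau_2$ (case (ii): $\sigma^{2^n}=\tau_2$, so $\sigma$ has order $2^{n+1}$ and $G=\langle\tau_1\rangle\times\langle\sigma\rangle\cong\Z/2^m\times\Z/2^{n+1}$), and tracking the images of $\tau_1,\tau_2$ under these isomorphisms immediately yields the stated descriptions of $I$. Case (iii), where $\sigma^{2^n}=\tau_1^{2^u}$, reduces to Proposition \ref{abna}(ii) applied to $\langle\tau_1,\sigma\rangle$, with $\tau_2$ contributing an independent $\Z/2$ direct factor.

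The delicate case is (iv), where $\sigma^{2^n}=\tau_1^{2^u}\tau_2$ intertwines all three elements. The plan is to introduce $\alpha:=\sigma$ and $\beta:=\tau_1\sigma^{-2^{n-u}}$ and exhibit an isomorphism $G\cong \Z/2^{u+1}\times \Z/2^{m+n-u}$ under which $\beta\leftrightarrow (1,0)$ and $\alpha\leftrightarrow (0,1)$. Direct computation gives $\beta^{2^u}=\tau_1^{2^u}\sigma^{-2^n}=\tau_2^{-1}\neq 1$ and $\beta^{2^{u+1}}=1$, so $\beta$ has order exactly $2^{u+1}$; likewise $\sigma^{2^{n+k}}=\tau_1^{2^{u+k}}$ for $k\geq 1$ shows $\alpha$ has order $2^{m+n-u}$. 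Since $|\langle\alpha\rangle|\cdot|\langle\beta\rangle|=2^{m+n+1}=|G|$, a cardinality count (combined with the observation that $\alpha^a=\beta^b$ forces congruences modulo $2^n$ and modulo $2^m$ that together force $a=b=0$) yields the desired isomorphism, and rewriting $\tau_1=\beta\alpha^{2^{n-u}}$ and $\tau_2=\beta^{2^u}$ in the new coordinates gives the description of $I$. The main obstacle is carefully verifying the order of $\beta$ and the triviality of $\langle\alpha\rangle\cap\langle\beta\rangle$; both ultimately rest on $\tau_2\neq 1$ together with the linear independence of $\tau_1,\tau_2$ in $I$.
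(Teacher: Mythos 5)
Your proof is correct and follows essentially the same route as the paper's: write $\sigma^{2^n}=\tau_1^{a}\tau_2^{b}$, normalize $a$ to $0$ or $2^u$ with $0\leq u<\min\{m,n\}$ by replacing $\sigma$ by $\sigma\tau_1^{c}$ and $\tau_1$ by an odd power, and then split on the value of $b\in\{0,1\}$. Your case~(iv) computation is actually a bit more careful than the paper's — you verify the orders of $\alpha=\sigma$ and $\beta=\tau_1\sigma^{-2^{n-u}}$ and the triviality of $\langle\alpha\rangle\cap\langle\beta\rangle$ explicitly, whereas the paper just asserts which elements correspond to the standard basis — but the two arguments are the same in substance.

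One thing worth flagging: in case~(iv) your computation $\tau_2=\beta^{2^u}$ lands $\tau_2$ on $(2^u,0)$ in $\Z/2^{u+1}\times\Z/2^{m+n-u}$, so you obtain $I\cong\langle(1,2^{n-u}),(2^u,0)\rangle$, not the $\langle(1,2^{n-u}),(2^u,1)\rangle$ printed in the statement you were handed. That is not an error on your part: the printed generator $(2^u,1)$ cannot be right, since $\langle(1,2^{n-u}),(2^u,1)\rangle$ would in fact generate all of $G$ (one has $(2^u,1)^{2^{n-u}}=(0,2^{n-u})$, whence $(1,0)$ and then $(0,1)$ lie in the subgroup). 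The paper's own proof text writes ``$\tau_2$ corresponds to $(2^u,0)$,'' and the downstream Theorem~\ref{r2ng}(iv) also uses $I\cong\langle(1,2^{n-u}),(2^u,0)\rangle$, so the $(2^u,1)$ in the statement of Proposition~\ref{abc2}(iv) is a typo and your $(2^u,0)$ is the correct description.
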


\begin{proof}
Fix an isomorphism $I\cong \Z/2^{m}\times \Z/2$, and take elements $\tau_1$ and $\tau_2$ of $I$ that correspond to $(1,0)$ and $(0,1)$ respectively. We may assume that there are $u\in \{0,1,\ldots,m\}$ and $u'\in \{0,1\}$ satisfying $\sigma^{2^{n}}=\tau_1^{2^{u}}\tau_2^{u'}$ (recall the notation that $\sigma$ is a lift of Frobenius). 

\textbf{Case 1.~$n\leq u_1\leq m$ or $u_1=m$. }

First, we consider the case $u'=0$. Then (i) occurs under the following. 
\begin{itemize}
\item If $n\leq u\leq m$, then $(1,0,0)$, $(0,1,0)$ and $(0,0,1)$ correspond to $\tau_1$, $\sigma \tau_{1}^{-2^{u_1-n}}$ and $\tau_2$ respectively. 
\item If $u=m$, then $(1,0,0)$, $(0,1,0)$ and $(0,0,1)$ correspond to $\tau_1$, $\sigma$ and $\tau_2$ respectively. 
\end{itemize}
Second, we consider the case $u_2=1$. Then (ii) occurs under the following: 
\begin{itemize}
\item If $n\leq u\leq m$, then $(1,0)$ and $(0,1)$ correspond to $\tau_1$ and $\sigma \tau_{1}^{-2^{u-n}}$ respectively. 
\item If $u=m$, then $(1,0)$ and $(0,1)$ correspond to $\tau_1$ and $\sigma$ respectively. 
\end{itemize}
Note that $\tau_2$ corresponds to $(0,2^{n})$ in the both cases. 

\textbf{Case 2. $0\leq u<\min\{m,n\}$. }

First, we consider the case $u'=0$. Then (iii) occurs under that $(1,0,0)$, $(0,1,0)$ and $(0,0,1)$ correspond to $\tau_1\sigma^{-2^{n-u}}$, $\sigma$ and $\tau_2$ respectively. 

Second, we consider the case $u'=1$. Then (iv) occurs under that $\tau_1\sigma^{-2^{n-u}}$ and $\sigma$ respectively. Note that $\tau_2$ corresponds to $(2^{u},0)$. 
\end{proof}

\begin{prop}\label{niab}
\emph{
\begin{enumerate}
\item If $F/F_0$ is cyclic, then we have $\Ima(\nu_{F/F^{+},F_0})=\N_{F_2/F_0}(F_2^{\times})$, where $F_2/F_0$ is the unique quadratic subextension of $F/F_0$. 
\item If $F/F_0$ is not cyclic, then $\nu_{F/F^{+},F_0}$ is surjective. 
\end{enumerate}}
\end{prop}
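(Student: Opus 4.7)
The plan is to interpret $\Ima(\nu_{F/F^+,F_0})$ on $F_0$-rational points through local class field theory and the transfer (Verlagerung) map.

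\emph{Step 1 (identification on $F_0$-points).} Unwinding the definition of $T_{F/F^+,F_0}$ gives $T_{F/F^+,F_0}(F_0)=\{t\in F^\times:\N_{F/F^+}(t)\in F_0^\times\}$, with $\nu_{F/F^+,F_0}(t)=\N_{F/F^+}(t)$. Hence $\Ima(\nu_{F/F^+,F_0})=F_0^\times\cap\N_{F/F^+}(F^\times)$, and the question becomes: which elements of $F_0^\times$ are norms from $F$ down to $F^+$?

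\emph{Step 2 (functoriality of the Artin map).} Set $G=\Gal(F/F_0)$ and $H=\Gal(F/F^+)$, a subgroup of order $2$. By the standard functoriality of local reciprocity for the tower $F_0\subset F^+\subset F$ (inclusion on the level of multiplicative groups corresponds to transfer on the Galois side), the composite $F_0^\times\hookrightarrow F^{+,\times}\xrightarrow{\theta_{F/F^+}}H$ coincides with $V\circ\theta_{F/F_0}$, where $V\colon G\to H$ is the transfer. Because $G$ is abelian, $V$ has the explicit form $V(g)=g^{[G:H]}=g^{|G|/2}$. Therefore
\begin{equation*}
F_0^\times\cap\N_{F/F^+}(F^\times)=\theta_{F/F_0}^{-1}(\ker V).
\end{equation*}

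\emph{Step 3 (the two cases).} In case (i), $G$ is cyclic of order $2^n$, so $\ker V=\{g\in G:g^{2^{n-1}}=1\}$ is the unique index-$2$ subgroup of $G$, which coincides with $\Gal(F/F_2)$ for the unique quadratic subextension $F_2$. Applying local class field theory to $F_2/F_0$, the preimage $\theta_{F/F_0}^{-1}(\Gal(F/F_2))$ is exactly $\N_{F_2/F_0}(F_2^\times)$. In case (ii), $G$ is a non-cyclic abelian $2$-group with at least two nontrivial cyclic factors, so its exponent divides $|G|/2$; thus $V$ is identically trivial, $\ker V=G$, and $\Ima(\nu_{F/F^+,F_0})=F_0^\times$.

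The main point to verify carefully is the functoriality formula of Step 2, which is the standard compatibility of the local Artin map with inclusions of base fields; once that is in hand, the two cases reduce to short group-theoretic computations in abelian $2$-groups.
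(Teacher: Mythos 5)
Your proof is correct and follows a genuinely different route from the paper's. The key move --- identifying $\Ima(\nu_{F/F^{+},F_0})$ with $\theta_{F/F_0}^{-1}(\ker V)$ via the compatibility of the local Artin map with inclusion of base fields (transfer/Verlagerung), and then using the explicit formula $V(g)=g^{[G:H]}$ valid for abelian $G$ --- gives a single group-theoretic computation that settles both cases at once: for cyclic $G$ of order $2^{n}$ the kernel is the unique index-$2$ subgroup, and for non-cyclic abelian $G$ the exponent divides $|G|/2$ so $V$ vanishes. The paper does not invoke the transfer and argues the two cases separately: for (i) it uses that $\N_{F/F_0}(F^{\times})\subset\Ima(\nu_{F/F^{+},F_0})\subset F_0^{\times}$ with index at most $2$, and rules out index $1$ by a contradiction resting on the uniqueness of the index-$2$ subgroup of a cyclic group; for (ii) it first replaces $F_0$ by an intermediate field so that $\Gal(F/F_0)\cong\Z/2\times\Z/2$ and then observes that $\Ima(\nu_{F/F^{+},F_0})$ contains the norm groups of the two quadratic subextensions other than $F^{+}$, which are distinct index-$2$ subgroups of $F_0^{\times}$ and hence generate it. Your version is more uniform and makes the mechanism (the transfer) explicit, whereas the paper's is more elementary, relying only on the norm-index theorem of local class field theory together with the structure of subgroups of small abelian $2$-groups; both are valid, and both ultimately hinge on the same norm-index input.
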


\begin{proof}
(i): Since $(F^{+,\times}:\N_{F/F^{+}}(F^{\times}))=2$, the index of $\Ima(\nu_{F/F^{+},F_0})=\N_{F/F^{+}}(F^{\times})\cap F_0^{\times}$ in $F_0^{\times}$ is less than or equal to $2$. If they are the same, then the local class field theory implies that there is a subgroup $H'$ of $G$ of index $2$ distinct from that of $G$ corresponding to $F_2$, which is absurd by the cyclicity of $G$. Hence we have $(F_0^{\times}:\Ima(\nu_{F/F^{+},F_0}))=2$, which implies
\begin{equation*}
\Ima(\nu_{F/F^{+},F_0})=\N_{F_2/F_0}(F_2^{\times}). 
\end{equation*}

(ii): By replacing $F_0$ to a subfield of $F$ containing $F_0$, we may assume that there is an isomorphsim $G\cong \Z/2\times \Z/2$ such that $F^{+}$ corresponds to $\langle(1,0)\rangle$. Let $F'/F_0$ and $F''/F_0$ be subextensions of $F/F_0$ corresponding to $\langle(0,1)\rangle$ and $\langle(1,1)\rangle$ respectively. Then $\Ima(\nu_{F/F^{+},F_0})$ contains both $\N_{F'/F_0}(F'^{\times})$ and $\N_{F''/F_0}(F''^{\times})$. Since these are distinct subgroups of $F_0^{\times}$ of index $2$ by the local class field theory, we obtain the desired result. 
\end{proof}

\subsection{$R$-equivalence problem}\label{retf}

Let $T$ be a torus over $F_0$. In this section, we consider the question submitted by Colliot-Th{\'e}l{\`e}ne and Suresh \cite[Question Locale]{ColliotThelene2007}. 
\begin{itemize}
\item[($R_{T}$)] Does we have $RT(F_0)\cdot K_{T}=T(F_0)$?
\end{itemize}

There are many known affirmative results on ($R_{T}$). In this paper, we will only use the following: 

\begin{prop}\label{ctsa}
\emph{The question (R$_{T}$) is affirmative if one of the following hold: 
\begin{enumerate}
\item $T$ is induced over $F_0$,
\item $T$ splits over a cyclic extension of $F_0$.
\item $T$ splits over a totally ramified Galois extension of $F_0$. 
\end{enumerate}}
\end{prop}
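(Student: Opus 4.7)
Parts (i) and (ii) are immediate from Proposition \ref{crtw}: in both cases $RT(F_0) = T(F_0)$, and hence $RT(F_0)\cdot K_{T} = T(F_0)$.

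The substantive case is (iii). Set $G = \Gal(F/F_0)$ and $M = X_{*}(T)$. Since every $\chi \in X^{*}(T)^{G}$ descends to a character of $T$ defined over $F_0$, I would introduce the valuation-type map
\[
v \colon T(F_0) \longrightarrow \Hom(X^{*}(T)^{G},\Z),\qquad v(t)(\chi) := \val_{F_0}(\chi(t)),
\]
and reduce the proof to two assertions: (a) $\Ker v = K_{T}$ and (b) $v(RT(F_0)) \supseteq v(T(F_0))$. Granted these, any $t \in T(F_0)$ admits some $r \in RT(F_0)$ with $v(r) = v(t)$, so $tr^{-1}\in K_{T}$ and $t \in RT(F_0)\cdot K_{T}$. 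For (a), letting $S$ denote the maximal split quotient of $T$ (whose character lattice is $X^{*}(T)^{G}$), the map $v$ factors through $S(F_0)$; since the kernel of $T \to S$ is anisotropic and thus has compact $F_0$-points, the preimage of $K_{S}$ in $T(F_0)$ is compact and coincides with $K_{T}$.

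The heart of the argument is (b), which I would establish by exploiting the summand of the coflasque resolution $P(T)\to T$ indexed by the splitting field itself, $F' = F$. Since $T$ splits over $F$ we have $T_{F}^{\spl} = T_{F}$, so this summand is $\Res_{F/F_0} T_{F}\to T$; by Proposition \ref{frrp} it arises from the identity on $T_F$, and hence on $F_0$-points it is the norm map $N_{F/F_0}\colon T(F) \to T(F_0)$. Choosing a splitting that identifies $T(F) = M\otimes_{\Z} F^{\times}$, the $G$-invariance of $\chi \in X^{*}(T)^{G}$ gives, for $m \in M$ and $y \in F^{\times}$,
\[
v\bigl(N_{F/F_0}(m\otimes y)\bigr)(\chi) = \langle \chi, m\rangle\cdot \val_{F_0}\bigl(N_{F/F_0}(y)\bigr).
\]
The totally ramified hypothesis now enters decisively: the residue degree being $1$ forces $\val_{F_0}\circ N_{F/F_0} = \val_{F}$ on $F^{\times}$, so taking $y$ to be a uniformizer of $F$ makes $v(N_{F/F_0}(m\otimes y))$ equal to the image of $m$ under the natural map $M \to \Hom(X^{*}(T)^{G},\Z)$. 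This map is surjective (its $\Z$-dual being the saturated inclusion $X^{*}(T)^{G}\hookrightarrow X^{*}(T)$), so letting $m$ vary over $M$ yields $v(RT(F_0)) = \Hom(X^{*}(T)^{G},\Z)\supseteq v(T(F_0))$.

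The principal obstacle is step (b), and the totally ramified hypothesis cannot be weakened in this argument: if the residue degree $f$ of $F/F_0$ exceeded $1$, the displayed computation would produce only a subgroup of $f\cdot\Hom(X^{*}(T)^{G},\Z)$ in the image, which in general fails to contain $v(T(F_0))$.
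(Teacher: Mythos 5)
Parts (i) and (ii) match the paper, which likewise invokes Proposition \ref{crtw}. For (iii) the paper simply cites \cite[Proposition 2.1 (v)]{ColliotThelene2007}, whereas you reconstruct a self-contained proof; the reconstruction is correct and is essentially the mechanism underlying the cited result. Your chain of ideas — identify $K_{T}$ as $\Ker v$ for the valuation map $v\colon T(F_{0})\to \Hom(X^{*}(T)^{G},\Z)$; observe that the summand $\Res_{F/F_{0}}T_{F}\to T$ of the coflasque resolution is, on $F_{0}$-points, the norm $\N_{F/F_{0}}\colon T(F)\to T(F_{0})$; exploit $\val_{F_{0}}\circ \N_{F/F_{0}}=\val_{F}$ (residue degree one) to see that $v\circ \N_{F/F_{0}}$ hits the image of $X_{*}(T)\to\Hom(X^{*}(T)^{G},\Z)$; and conclude via the surjectivity of that pairing map, which follows because $X^{*}(T)^{G}\hookrightarrow X^{*}(T)$ is saturated, hence split — is sound, and it makes transparent exactly where the totally-ramified hypothesis is used, which the paper's bare citation does not. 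The one place you should tighten is step (a): to see that $\Ker v$ is compact (and hence is $K_{T}$ rather than merely containing it), note explicitly that $T(F_{0})\to S(F_{0})$ is a proper map because its kernel is anisotropic with compact $F_{0}$-points and its image is closed of finite index, so the preimage of the maximal compact $K_{S}$ is compact; your phrasing gestures at this but the properness deserves a sentence. With that small addition the proof stands, and compared to the paper it trades a citation for a short direct argument that also clarifies, as you note at the end, why residue degree $f>1$ would only produce $f\cdot\Hom(X^{*}(T)^{G},\Z)$ from this particular summand.
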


\begin{proof}
(i), (ii): These follow from Proposition \ref{crtw}. 

(ii): This is \cite[Proposition 2.1 (v)]{ColliotThelene2007}. 
\end{proof}

In the sequel of this section, we consider the question ($R_{T}$) for $T=T_{F/F^{+},F_0}$, which is simply denoted by ($R_{F/F^{+},F_0}$) in the sequel. Moreover, we rewrite $K_{F/F^{+},F_0}$ for $K_{T_{F/F^{+},F_0}}$. 

\begin{cor}\label{spaf}
\emph{If $F=F^{+}\times F^{+}$, then ($R_{F/F^{+},F_0}$) is affirmative. }
\end{cor}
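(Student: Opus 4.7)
The plan is to reduce the corollary to Proposition \ref{ctsa}(i) by showing that the hypothesis $F = F^+ \times F^+$ forces $T_{F/F^+, F_0}$ to be an induced torus over $F_0$.

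First I would decompose $F^+$ as a product of fields $F^+ = \prod_{i=1}^r F_i^+$, so that $F = \prod_{i=1}^r (F_i^+ \times F_i^+)$ realizes the setting of Proposition \ref{trid}(iii) with $k_i^+ = F_i^+$ and $k_i = F_i^+ \times F_i^+$. For each individual index $i$, Proposition \ref{trid}(ii) supplies an isomorphism
\begin{equation*}
T_{F_i^+ \times F_i^+/F_i^+, F_0} \cong \G_m \times \Res_{F_i^+/F_0}\G_m
\end{equation*}
under which the map $\nu_{F_i^+ \times F_i^+/F_i^+, F_0}$ becomes the first projection.

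Next, I would feed these factor-wise isomorphisms into the Cartesian diagram of Proposition \ref{trid}(iii). The bottom-right corner is then $\prod_i \G_m \times \Res_{F^+/F_0}\G_m$ with projection $(\nu_{k_i/k_i^+, k_0})_i$ equal to the first factor projection, while the top-right map $\diag\colon \G_m \to \prod_i \G_m$ is the diagonal. Taking the fiber product collapses $\G_m \times_{\prod_i \G_m} \prod_i \G_m$ to a single copy of $\G_m$, and the remaining factor $\Res_{F^+/F_0}\G_m$ is untouched. Consequently
\begin{equation*}
T_{F/F^+, F_0} \cong \G_m \times \Res_{F^+/F_0}\G_m,
\end{equation*}
which is visibly induced over $F_0$ since $F^+$ is a finite étale $F_0$-algebra.

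With the induced structure established, Proposition \ref{ctsa}(i) immediately yields $RT_{F/F^+, F_0}(F_0) = T_{F/F^+, F_0}(F_0)$, which \emph{a fortiori} gives $RT_{F/F^+, F_0}(F_0)\cdot K_{F/F^+, F_0} = T_{F/F^+, F_0}(F_0)$; this is exactly the affirmative answer to ($R_{F/F^+, F_0}$). There is no real obstacle here: the argument is pure bookkeeping with the product decomposition, and all ingredients are already in place from Section \ref{pddf}.
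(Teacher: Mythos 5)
Your proof is correct and is essentially the route the paper takes, but you are more careful about a small point the paper glosses over: Proposition~\ref{trid}~(ii) is stated under the hypothesis that $k^{+}$ is a field, whereas in Section~\ref{reqp} the algebra $F^{+}$ is allowed to be a product of fields. The paper's proof of Corollary~\ref{spaf} is simply ``This follows from Proposition~\ref{trid}~(ii),'' implicitly treating the étale case as the same; you instead use Proposition~\ref{trid}~(iii) to pass from the factors $F_{i}^{+}$ to $F^{+}$ and then collapse the fiber product to obtain $T_{F/F^{+},F_0}\cong \G_m\times \Res_{F^{+}/F_0}\G_m = \Res_{(F_0\times F^{+})/F_0}\G_m$, identifying it as induced. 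After that you conclude via Proposition~\ref{ctsa}~(i) (equivalently Proposition~\ref{crtw}~(i)), which is exactly what the paper relies on. So the substance is identical; your version just makes the reduction to the field case explicit rather than leaving it as immediate.
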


\begin{proof}
This follows from Proposition \ref{trid} (ii). 
\end{proof}

\begin{lem}\label{rdfl}
\emph{Let $F^{+}=F_1^{+}\times \cdots \times F_r^{+}$ and $F=F_1\times \cdots \times F_r$, where $F_i^{+}$ is a field and $F_i$ is an {\'e}tale quadratic algebra over $F_i^{+}$ for any $i$. 
\begin{enumerate}
\item The following diagram is Cartesian: 
\begin{equation*}
\xymatrix{
K_{F/F^{+},F_0}\ar[r] \ar[d]&\prod_{i=1}^{r}K_{F_i/F_i^{+},F_0} \ar[d]\\
T_{F/F^{+},F_0}(F_0)\ar[r] &\prod_{i=1}^{r}T_{F_i/F_i^{+},F_0}(F_0)}
\end{equation*}
\item Assume $F_1^{+}=\cdots=F_r^{+}$ and $F_1=\cdots=F_r$. Then ($R_{F/F^{+},F_0}$) is affirmative if and only if so is for ($R_{F_{1}/F_{1}^{+},F_{0}}$). 
\end{enumerate}}
\end{lem}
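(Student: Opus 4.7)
The plan is to handle the two parts separately. For part (i), I would start from the observation that the unique maximal compact subgroup of $F^{\times}=\prod_{i=1}^{r}F_{i}^{\times}$ is $\prod_{i}\O_{F_{i}}^{\times}$, because a compact subgroup of a product projects to a compact subgroup in each factor and each $F_{i}^{\times}$ has a unique maximal compact subgroup $\O_{F_{i}}^{\times}$. Consequently
\begin{equation*}
K_{F/F^{+},F_{0}}=T_{F/F^{+},F_{0}}(F_{0})\cap\prod_{i}\O_{F_{i}}^{\times},
\end{equation*}
and the analogous identity holds for each factor. Hence a tuple $(t_{i})_{i}$ lies in $K_{F/F^{+},F_{0}}$ if and only if it lies in $T_{F/F^{+},F_{0}}(F_{0})$ and each $t_{i}\in\O_{F_{i}}^{\times}$, equivalently each $t_{i}\in K_{F_{i}/F_{i}^{+},F_{0}}$, which is precisely the Cartesian property.

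For part (ii), the easy direction $(\Rightarrow)$ uses the diagonal embedding: given $t_{1}\in T_{F_{1}/F_{1}^{+},F_{0}}(F_{0})$, the tuple $t=(t_{1},\ldots,t_{1})$ has a common $\nu$-value and therefore lifts to an element of $T_{F/F^{+},F_{0}}(F_{0})$ by Proposition \ref{trid}(iii). Applying ($R_{F/F^{+},F_{0}}$) to this $t$ yields $t=rk$; by the Cartesian property of Proposition \ref{n1rt}(i) and part (i), each component of $r$ lies in $RT_{F_{1}/F_{1}^{+},F_{0}}(F_{0})$ and each component of $k$ lies in $K_{F_{1}/F_{1}^{+},F_{0}}$. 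Projecting to the first coordinate then produces the required decomposition $t_{1}=r_{1}k_{1}$.

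For the converse $(\Leftarrow)$, first dispose of the split case $F_{1}=F_{1}^{+}\times F_{1}^{+}$: here $F=F^{+}\times F^{+}$ as well, and both ($R_{F_{1}/F_{1}^{+},F_{0}}$) and ($R_{F/F^{+},F_{0}}$) are trivially affirmative by Corollary \ref{spaf}. I would henceforth assume $F_{1}$ is a field, so that $T_{F_{1}/F_{1}^{+},F_{0}}^{1}\cong\Res_{F_{1}^{+}/F_{0}}(T_{F_{1}/F_{1}^{+},F_{1}^{+}}^{1})$ is anisotropic over $F_{0}$; hence $T_{F_{1}/F_{1}^{+},F_{0}}^{1}(F_{0})$ is compact and contained in $K_{F_{1}/F_{1}^{+},F_{0}}$. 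Given $t=(t_{i})_{i}\in T_{F/F^{+},F_{0}}(F_{0})$ with common $\nu$-value $a\in F_{0}^{\times}$, I would apply ($R_{F_{1}/F_{1}^{+},F_{0}}$) only to the single element $t_{1}$, writing $t_{1}=r_{0}k_{0}$ with $r_{0}\in RT_{F_{1}/F_{1}^{+},F_{0}}(F_{0})$ and $k_{0}\in K_{F_{1}/F_{1}^{+},F_{0}}$, and then form the diagonal $(r_{0},\ldots,r_{0})$, which lies in $RT_{F/F^{+},F_{0}}(F_{0})$ by Proposition \ref{n1rt}(i). Each quotient $t_{i}/r_{0}$ has $\nu$-value $\nu(k_{0})$, and since $\nu^{-1}(\nu(k_{0}))\cap T_{F_{1}/F_{1}^{+},F_{0}}(F_{0})=k_{0}\cdot T_{F_{1}/F_{1}^{+},F_{0}}^{1}(F_{0})\subset K_{F_{1}/F_{1}^{+},F_{0}}$, every $t_{i}/r_{0}$ already lies in $K_{F_{1}/F_{1}^{+},F_{0}}$; part (i) then lifts $(t_{i}/r_{0})_{i}$ into $K_{F/F^{+},F_{0}}$, completing the factorization $t=(r_{0},\ldots,r_{0})\cdot(t_{i}/r_{0})_{i}$.

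The main obstacle is precisely what this argument circumvents: if one naively decomposes each $t_{i}=r_{i}k_{i}$ independently using ($R_{F_{1}/F_{1}^{+},F_{0}}$), the resulting $\nu(r_{i})$ need not agree across $i$, and one cannot simply rescale by scalars since the embedded $\G_{m}\hookrightarrow T_{F_{1}/F_{1}^{+},F_{0}}$ composed with $\nu$ is the squaring map, so the discrepancy in $\O_{F_{0}}^{\times}$ is not in general a square. Applying the decomposition only once and transporting it diagonally eliminates this issue by using the compactness of $T_{F_{1}/F_{1}^{+},F_{0}}^{1}(F_{0})$ to absorb the residual corrections into $K_{F/F^{+},F_{0}}$.
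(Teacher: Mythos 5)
Your proof is correct and follows the same route the paper indicates, namely combining the Cartesian square of part (i) with that of Proposition~\ref{n1rt}(i) (and Proposition~\ref{trid}(iii)). You have in addition spelled out the nontrivial step that the paper's one-line citation leaves implicit: in the ``$\Leftarrow$'' direction, the component-wise decompositions of the $t_i$ need not have matching $\nu$-values, and one must instead propagate a single decomposition diagonally and use the compactness of $T^1_{F_1/F_1^+,F_0}(F_0)$ (hence its containment in $K_{F_1/F_1^+,F_0}$) to absorb the discrepancies.
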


\begin{proof}
(i): This follows from the definitions of $K_{F/F^{+},F_0}$ and $K_{F_i/F_i^{+},F_0}$. 

(ii): This is a consequence of (i) and Proposition \ref{n1rt} (i). 
\end{proof}

\emph{Until the end of Section \ref{abng}, we assume that both $F^{+}$ and $F$ are fields}. For an open subgroup $H'$ of $\Gamma_{k_0}$, we denote by $\nu_{k/k^{+},k_0*}\!\mid_{H'}$ the composite map
\begin{equation*}
X_{*}(T_{F/F^{+},F_0})^{H'}\hookrightarrow X_{*}(T_{F/F^{+},F_0}) \rightarrow \Z. 
\end{equation*}

\begin{prop}\label{sjcc}
\emph{The following are equivalent: 
\begin{enumerate}
\item ($R_{F/F^{+},F_0}$) is affirmative and $\val_{F_0}\circ \nu_{F/F^{+},F_0}$ is surjective, 
\item $\nu_{F/F^{+},F_0*}\!\mid_{\Gamma_{F'}}$ is surjective for some totally ramified subextension $F'/F_0$ of $F_0^{\sep}/F_0$. 
\end{enumerate}}
\end{prop}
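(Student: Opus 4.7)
The plan is to reduce both (i) and (ii) to the single numerical condition that $\val_{F_{0}}(\nu_{F/F^{+},F_{0}}(RT_{F/F^{+},F_{0}}(F_{0})))=\Z$. This rests on two independent computations, one about the maximum compact subgroup $K_{F/F^{+},F_{0}}$ and one about $RT_{F/F^{+},F_{0}}(F_{0})$, which I then splice together.

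First I would establish the identification
\begin{equation*}
K_{F/F^{+},F_{0}}=\nu_{F/F^{+},F_{0}}^{-1}(\O_{F_{0}}^{\times})=\{t\in T_{F/F^{+},F_{0}}(F_{0})\mid \val_{F_{0}}(\nu_{F/F^{+},F_{0}}(t))=0\}.
\end{equation*}
This follows once $T_{F/F^{+},F_{0}}^{1}$ is known to be anisotropic over $F_{0}$. Using the description $X_{*}(T_{F/F^{+},F_{0}}^{1})=\Ker(\Z[\Gamma_{F_{0}}/\Gamma_{F}]\to \Z[\Gamma_{F_{0}}/\Gamma_{F^{+}}])$, together with the observation that $\Gamma_{F_{0}}$ acts transitively on cosets by left multiplication (so the $\Gamma_{F_{0}}$-invariants of $\Z[\Gamma_{F_{0}}/\Gamma_{F}]$ are spanned by $\sum_{\sigma}[\sigma]$, which maps to the nonzero element $2\sum_{\sigma'}[\sigma']$), one gets $X_{*}(T^{1})^{\Gamma_{F_{0}}}=0$. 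Hence $T^{1}(F_{0})$ is compact, which forces $K_{T}$ to be the $\nu$-preimage of the maximum compact subgroup of $F_{0}^{\times}$, as asserted.

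Next I would compute $\val_{F_{0}}(\nu(RT_{F/F^{+},F_{0}}(F_{0})))$ via the standard coflasque resolution $P(T)=\prod_{F'\subseteq \widetilde{F}}\Res_{F'/F_{0}}T_{F'}^{\spl}$, where $\widetilde{F}$ is a Galois splitting field of $T:=T_{F/F^{+},F_{0}}$. By Proposition~\ref{frrp} the composite $\Res_{F'/F_{0}}T_{F'}^{\spl}\to T\xrightarrow{\nu}\G_{m}$ is the Frobenius adjoint of $T_{F'}^{\spl}\hookrightarrow T_{F'}\xrightarrow{\nu_{F'}}\G_{m,F'}$, so on $F_{0}$-points it sends $x\in T_{F'}^{\spl}(F')$ to $N_{F'/F_{0}}(\nu(x))\in F_{0}^{\times}$. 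Since $T_{F'}^{\spl}$ is split with cocharacter lattice $X_{*}(T)^{\Gamma_{F'}}$, the image $\nu(T_{F'}^{\spl}(F'))\subseteq F'^{\times}$ is $(F'^{\times})^{d_{F'}}$ where $d_{F'}$ is the positive generator of $\Ima(\nu_{F/F^{+},F_{0}*}\!\mid_{\Gamma_{F'}})\subseteq \Z$. Combined with $\val_{F_{0}}\circ N_{F'/F_{0}}=f(F'/F_{0})\val_{F'}$, summing over $F'$ (each image is an ideal of $\Z$, and the sum of ideals is the gcd) yields
\begin{equation*}
\val_{F_{0}}(\nu(RT(F_{0})))=\gcd_{F'\subseteq \widetilde{F}}\bigl(f(F'/F_{0})\,d_{F'}\bigr)\cdot \Z.
\end{equation*}

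With the two computations in hand, the equivalence is formal. The first shows (i) $\Leftrightarrow$ $\val_{F_{0}}(\nu(RT(F_{0})))=\Z$: using $\nu(K_{T})\subseteq \O_{F_{0}}^{\times}$ the forward direction is immediate, while for the reverse, given $t\in T(F_{0})$ one picks $r\in RT(F_{0})$ with $\val(\nu(r))=\val(\nu(t))$ so that $tr^{-1}\in \ker(\val_{F_{0}}\circ \nu)=K_{T}$. The second shows this in turn amounts to the existence of some $F'\subseteq \widetilde{F}$ with $f(F'/F_{0})=d_{F'}=1$, i.e., the restricted form of (ii); a general totally ramified $F'\subseteq F_{0}^{\sep}$ with $\nu_{*}\!\mid_{\Gamma_{F'}}$ surjective can be replaced by $F'\cap \widetilde{F}$, which is also totally ramified and satisfies $X_{*}(T)^{\Gamma_{F'}}=X_{*}(T)^{\Gamma_{F'\cap \widetilde{F}}}$ since $T$ splits over $\widetilde{F}$. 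The main obstacle I expect is the careful $F_{0}$-point bookkeeping of the adjoint morphism $\Res_{F'/F_{0}}T_{F'}^{\spl}\to T\to \G_{m}$: identifying the composite as a norm $N_{F'/F_{0}}$ in the target is precisely what produces the factor $f(F'/F_{0})$, and thus pins down the totally-ramified condition in (ii).
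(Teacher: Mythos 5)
Your approach is essentially the same as the paper's: both proofs hinge on the identification of $K_{F/F^{+},F_0}$ with the kernel of $\val_{F_0}\circ\nu_{F/F^{+},F_0}$ (which you justify via anisotropy of $T_{F/F^{+},F_0}^{1}$ — the paper leaves this implicit), on the standard coflasque resolution $P(T)\to T$, and on Frobenius reciprocity to recognize the composites $\Res_{F'/F_0}\G_m\to T\xrightarrow{\nu}\G_m$ as integer powers of norms $\N_{F'/F_0}$. You reorganize the two implications into a single computation of $\val_{F_0}\bigl(\nu_{F/F^{+},F_0}(RT_{F/F^{+},F_0}(F_0))\bigr)$ as a gcd of ideals, whereas the paper argues the two directions separately; but the underlying computations coincide, and your reduction of an arbitrary totally ramified $F'\subset F_0^{\sep}$ to $F'\cap\widetilde F$ is the right way to handle the quantifier in (ii).

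One step, however, is not justified in your write-up — and the paper elides the very same point. You assert that $\gcd_{F'\subseteq\widetilde F}\bigl(f(F'/F_0)\,d_{F'}\bigr)=1$ \emph{amounts to} the existence of a single $F'\subseteq\widetilde F$ with $f(F'/F_0)=d_{F'}=1$. A gcd of positive integers can be $1$ without any individual term being $1$, so the forward direction needs an argument; the paper's claim that the coflasque resolution supplies a single $\varphi\colon\Res_{F'/F_0}\G_m\to T$ with $\val_{F_0}\circ\nu\circ\varphi$ surjective conceals exactly the same leap. What is available for free is that $d_{F_0}=2$ (the central $\G_m$ maps by squaring under $\nu$), and applying the trace to $X_*(T_{F/F^{+},F_0})^{\Gamma_{F_0}}$ shows $d_{F'}[F':F_0]\in 2\Z$ for all $F'$; this yields that $\gcd=1$ forces some $F'$ with $d_{F'}=1$ and $f(F'/F_0)$ \emph{odd}, but the proposition asks for $f(F'/F_0)=1$, which requires further work. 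In the only place Section~\ref{reqp} invokes this proposition, $\Gal(\widetilde F/F_0)$ is a $2$-group, so every $f(F'/F_0)$ is a power of $2$ and the step is immediate; if you intend the proposition at the stated level of generality, you should close this gap explicitly.
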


\begin{proof}
(i) $\Rightarrow$ (ii): Considering the coflasque resolution
\begin{equation*}
1\rightarrow F(T_{F/F^{+},F_0})\rightarrow P(T_{F/F^{+},F_0})\rightarrow T_{F/F^{+},F_0}\rightarrow 1
\end{equation*}
of $T_{F/F^{+},F_0}$ as in Section \ref{cofl}, there is a finite separable extension $F'/F_0$ and a morphism
\begin{equation*}
\varphi \colon \Res_{F'/F_0}\G_m\rightarrow T
\end{equation*}
such that $\val_{F_0}\circ \nu_{F/F^{+},F_0}\circ \varphi$ is surjective. On the other hand, Proposition \ref{frrp} implies $\nu_{F/F^{+},F_0}\circ \varphi=\N_{F'/F_0}^{n'}$ for some $n'\in \Z$. Then the surjectivity implies $n'\in \{\pm 1\}$ and that $F'/F_0$ is totally ramified. In particular, $\nu_{F/F^{+},F_0}\!\mid_{\Gamma_{F'}}$ is surjective. 

(ii) $\Rightarrow$ (i): Take a totally ramified extension $F'/F_0$ and an element $c$ of the preimage of $1$ under $\nu_{F/F^{+},F_0*}\!\mid_{\Gamma_{F'}}$ which satisfy (ii). Let
\begin{equation*}
\psi_{c}\colon \Res_{F'/F_0}\G_m \rightarrow T_{F/F^{+},F_0}
\end{equation*}
be the homomorphism induced by
\begin{equation*}
\Z \rightarrow X_{*}(T_{F/F^{+},F_0}\otimes_{F_0}F');1\mapsto c
\end{equation*}
via Propositions \ref{treq} and \ref{frrp}. Then we have $\nu_{F/F^{+},F_0}\circ \psi_{c}=\N_{F'/F_0}$. Moreover, we have
\begin{equation*}
\val_{F_0}\circ \nu_{F/F^{+},F_0}\circ \psi_{c}=\val_{F_0}\circ \N_{F'/F_0}=[F':F_0]\val_{F'}. 
\end{equation*}
Since $[F':F_0]=1$, we obtain the surjectivity of $\val_{F_0}\circ \nu_{F/F^{+},F_0}\circ \psi_{c}$. This implies (i) as desired. 
\end{proof}

\subsection{Determination of the negative conditions}\label{abng}

Here we keep the assumptions and notations on $F/F_0$ in Section \ref{abex}. We determine $F/F^{+}$ for which ($R_{F/F^{+},F_0}$) is negative. Recall that $G:=\Gal(F/F_0)$, and $I\subset G$ is the inertia group of $F/F_0$. Moreover, let $H^{+}:=\Gal(F/F^{+})$. 

\begin{thm}\label{rpng}
\emph{Assume that the inertia group of $F/F_0$ is cyclic. Then ($R_{F/F^{+},F_0}$) is negative if and only if one of the following hold. 
\begin{enumerate}
\item There is an isomorphism $G\cong \Z/2^{m}\times \Z/2^{n}$ where $0<m<n$ that induces $H^{+}\cong \langle (0,2^{m-1})\rangle$ and $I\cong \langle (1,0)\rangle$ respectively (in particular, $F/F^{+}$ is unramified). 
\item There is an isomorphism $G\cong \Z/2^{u}\times \Z/2^{m+n-u}$ where $0<u<\min\{m,n\}$ that induces $H^{+}\cong \langle (0,2^{m+n-u-1})\rangle$ and $I\cong \langle (1,2^{n-u})\rangle$ respectively (in particular, $F/F^{+}$ is ramified). 
\end{enumerate}
In particular, the map $\nu_{F/F^{+},F_0}$ is surjective on the $F_0$-valued points if ($R_{F/F^{+},F_0}$) is negative. }
\end{thm}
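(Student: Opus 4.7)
The plan is to invoke Proposition \ref{sjcc} to convert the question into a combinatorial one on Galois modules, and then to enumerate cases using Proposition \ref{abna}. As a preliminary reduction, if $G := \Gal(F/F_0)$ is cyclic then $T_{F/F^{+},F_0}$ splits over the cyclic extension $F/F_0$, so $(R_{F/F^{+},F_0})$ is affirmative by Proposition \ref{ctsa}(ii); both configurations (i) and (ii) in the theorem force $G$ to have two nontrivial cyclic factors and hence to be noncyclic, so the assertion holds trivially in the cyclic case. Henceforth I assume $G$ is noncyclic; then Proposition \ref{niab}(ii) implies that $\nu_{F/F^{+},F_0}$ is surjective on $F_0$-points (which simultaneously establishes the last sentence of the theorem), and a fortiori $\val_{F_0}\circ \nu_{F/F^{+},F_0}$ is surjective. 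Under this hypothesis, Proposition \ref{sjcc} shows that $(R_{F/F^{+},F_0})$ is affirmative if and only if there is a subgroup $H' \leq G$ with $H' I = G$ for which $\nu_{F/F^{+},F_0*}\!\mid_{X_{*}(T_{F/F^{+},F_0})^{H'}}$ is surjective onto $\Z$.

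To make this concrete, I take $X_{*}$ of the Cartesian square in Proposition \ref{trid}(i) to identify
\[
X_{*}(T_{F/F^{+},F_0}) = \{(x, n) \in \Z[G] \times \Z : \pi(x) = n \cdot \Delta\},
\]
where $\pi\colon \Z[G] \to \Z[G/H^{+}]$ is the natural projection, $H^{+} = \Gal(F/F^{+})$, $\Delta = \sum_{\overline{g}\in G/H^{+}} \overline{g}$, and $\nu_{F/F^{+},F_0*}(x,n) = n$. Thus I seek an $H'$-invariant $x \in \Z[G]$ whose $H^{+}$-coset sums all equal $1$. Since the left-$H'$-invariants of $\Z[G]$ are $\Z$-spanned by the characteristic functions of right cosets $H'g$, this is a $\Z$-linear problem in coefficients $y_{H'g}$. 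Enlarging $H'$ only shrinks its invariants, so I restrict to a cyclic $H'$ minimal among those with $H'I = G$: in Proposition \ref{abna}(i) such an $H'$ has the form $\langle (a,1)\rangle$ with $a \in \Z/2^{m}$, while in Proposition \ref{abna}(ii) it has the analogous form with $a \in \Z/2^{u}$ (the order of $H'$ in the latter case is forced to be $2^{m+n-u}$ because $I$ is not a direct summand of $G$, but the set of orbits $H' \backslash G$ is still indexed by $\Z/2^{u}$).

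The heart of the proof is the case analysis over the six relevant configurations obtained by crossing the two cases of Proposition \ref{abna} with the (at most) three order-$2$ subgroups of $G$ that are candidates for $H^{+}$. In each configuration, substituting the $H'$-invariant $x$ into the $H^{+}$-coset condition collapses to a recurrence
\[
y(c) + y(c + b) = 1 \quad (c \in \Z/2^{\ell}),
\]
where $\ell = m$ or $u$ and $b \in \Z/2^{\ell}$ is an explicit $\Z$-linear expression in $a$ and the generator of $H^{+}$. Since every nonzero element of $\Z/2^{\ell}$ has order a power of $2$, hence even, the recurrence admits an integer solution precisely when $b \not\equiv 0$; when $b \equiv 0$ it degenerates to the impossible equation $2y(c) = 1$. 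Computing $b$ in each subcase, the theorem reduces to the dichotomy: the configurations in which $b \equiv 0$ for every admissible $a$ are exactly those described in (i) and (ii), and in every other configuration a specific choice of $a$ makes $b$ nonzero.

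The main obstacle is organizing the bookkeeping: each of the six subcases requires computing $b$ as a function of $a$ and the generator of $H^{+}$, determining whether $a$ can be tuned to make $b$ nonzero, and handling the borderline ranges of $m$, $n$, and $u$ (for instance $m = n$ in case (i) of Proposition \ref{abna}, or $u$ close to $\min\{m,n\}$ in case (ii)) where the set of order-$2$ subgroups of $G$ degenerates or where the Proposition \ref{abna} cases overlap.
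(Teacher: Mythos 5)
Your proposal is correct and follows essentially the same route as the paper: reduce via Proposition~\ref{niab}(ii) and Proposition~\ref{sjcc} to asking whether some subgroup $H'\leq G$ with $H'I=G$ has $\nu_{F/F^{+},F_0*}|_{H'}$ surjective, then run a case analysis over Proposition~\ref{abna} and the three candidate index-$2$ subgroups $H^{+}$. Your recurrence $y(c)+y(c+b)=1$ on $\Z/2^{\ell}$ (equivalently, testing whether $H^{+}\subseteq H'$ for every cyclic $H'=\langle(a,1)\rangle$) is a cleaner packaging of what the paper verifies elementwise, e.g.\ via $2^{n-1}(a,1)=(0,2^{n-1})$, and would yield the same dichotomy once the six-case bookkeeping is carried out.
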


\begin{thm}\label{r2ng}
\emph{Assume $p=2$ and that the inertia group of $F/F_0$ is of the form $\Z/2^{m}\times \Z/2$ where $m\in \Zpn$. Then ($R_{F/F^{+},F_0}$) is negative if and only if one of the following hold. 
\begin{enumerate}
\item There is an isomorphism $G\cong \Z/2^{m}\times \Z/2^{n}\times \Z/2$ where $m<n$ that induces $H^{+}\cong \langle (0,2^{n-1},0)\rangle$ and $I\cong \langle (1,0,0),(0,0,1)\rangle$ respectively (in particular, $F/F^{+}$ is unramified). 
\item There is an isomorphism $G\cong \Z/2^{m}\times \Z/2^{n+1}$ where $m\leq n$ that induces $H^{+}\cong \langle (0,2^{n})\rangle$ and $I\cong \langle (1,0),(0,2^{n})\rangle$ respectively (in particular, $F/F^{+}$ is ramified). 
\item There is an isomorphism $G\cong \Z/2^{u}\times \Z/2^{m+n-u}\times \Z/2$ where $0<u<\min\{m,n\}$ that induces $H^{+}\cong \langle (0,2^{m+n-u-1},0)\rangle$ and $I\cong \langle (1,2^{n-u},0),(0,0,1)\rangle$ respectively (in particular, $F/F^{+}$ is ramified). 
\item There is an isomorphism $G\cong \Z/2^{u+1}\times \Z/2^{m+n-u}$ where $0\leq u<\min\{m,n\}$ that induces $H^{+}\cong \langle (0,2^{m+n-u-1})\rangle$ and $I\cong \langle (1,2^{n-u}),(2^{u},0)\rangle$ respectively (in particular, $F/F^{+}$ is ramified). 
\end{enumerate}
In particular, the map $\nu_{F/F^{+},F_0}$ is surjective if ($R_{F/F^{+},F_0}$) is negative. }
\end{thm}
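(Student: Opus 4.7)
The plan is to mimic the strategy from Theorem \ref{rpng}: combine Proposition \ref{niab} and Proposition \ref{sjcc} to reduce the affirmativity of ($R_{F/F^{+},F_{0}}$) to a combinatorial condition on $G$, and then verify this condition case by case using Proposition \ref{abc2}.

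First, since $I\cong \Z/2^{m}\times \Z/2$ contains a copy of $\Z/2\times \Z/2$ (as $m\geq 1$), the group $G$ is non-cyclic. Proposition \ref{niab}(ii) then yields the surjectivity of $\nu_{F/F^{+},F_{0}}$ on $F_{0}$-valued points, which proves the last sentence of the theorem and in particular the surjectivity of $\val_{F_{0}}\circ \nu_{F/F^{+},F_{0}}$. Proposition \ref{sjcc} therefore reduces affirmativity of ($R_{F/F^{+},F_{0}}$) to the existence of a subgroup $H'\leq G$ with $H'I=G$ (corresponding to a totally ramified subextension $F'/F_{0}$ of $F/F_{0}$) such that $\nu_{F/F^{+},F_{0}*}\!\mid_{\Gal(F/F')}\colon X_{*}(T_{F/F^{+},F_{0}})^{H'}\to \Z$ is surjective.

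Next, translate this criterion into module theory. Identify $X_{*}(T_{F/F^{+},F_{0}})$ with $\pi^{-1}(\Z\cdot N)\subset \Z[G]$, where $\pi\colon \Z[G]\to \Z[G/H^{+}]$ is the quotient and $N=\sum_{\bar{g}\in G/H^{+}}\bar{g}$, so that $\nu_{F/F^{+},F_{0}*}(x)$ equals the integer $k$ with $\pi(x)=kN$. Then surjectivity on $X_{*}(T_{F/F^{+},F_{0}})^{H'}$ is equivalent to $N\in \pi(\Z[G]^{H'})$. Since $\Z[G]^{H'}$ is spanned as an abelian group by the orbit sums $\sigma_{[g]}=\sum_{g'\in gH'}g'$ and each $\pi(\sigma_{[g]})$ equals $|H^{+}\cap H'|$ times the indicator of a coset of $H^{+}H'/H^{+}$ in $G/H^{+}$, one checks that $N\in \pi(\Z[G]^{H'})$ iff $|H^{+}\cap H'|=1$, i.e., (using $|H^{+}|=2$) iff $H^{+}\not\leq H'$. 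Hence ($R_{F/F^{+},F_{0}}$) is negative if and only if $H^{+}$ is contained in every $H'\leq G$ with $H'I=G$.

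Finally, run through the four structural cases of Proposition \ref{abc2}. In each, $G/I$ is cyclic of order $2^{n}$, so every $H'$ with $H'I=G$ contains a cyclic subgroup $\langle g\rangle$ whose image generates $G/I$; it suffices to determine when every such $\langle g\rangle$ contains a prescribed order-$2$ subgroup $H^{+}$. Explicit power computations using the relations furnished by Proposition \ref{abc2} identify, in each case, the unique $H^{+}$ and the parameter constraints on $(m,n,u)$ that force the inclusion, producing exactly the four conditions of the theorem. For example, in case (i) of Proposition \ref{abc2} the generator $(a,1,b)$ of a minimal $H'$ satisfies $(a,1,b)^{2^{n-1}}=(2^{n-1}a\bmod 2^{m},2^{n-1},0)$; the first coordinate vanishes for every $a\in \Z/2^{m}$ precisely when $m<n$, which forces $H^{+}=\langle (0,2^{n-1},0)\rangle$ and recovers condition (i) of the theorem.

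The main obstacle will be case (iv) of Proposition \ref{abc2}, in which $G\cong \Z/2^{u+1}\times \Z/2^{m+n-u}$ is a non-split extension and the Frobenius lift satisfies the nontrivial relation $\sigma^{2^{n}}=\tau_{1}^{2^{u}}\tau_{2}$. Here the cyclic subgroups $\langle g\rangle$ with $\langle g\rangle I=G$ must be parametrized carefully, and one must compute the $2^{m+n-u-1}$-th power of each candidate generator to verify that the only $H^{+}$ forced into every such $\langle g\rangle$ is $\langle (0,2^{m+n-u-1})\rangle$, and that this occurs exactly in the parameter range $0\leq u<\min\{m,n\}$; in the remaining ranges an explicit complement $H'$ avoiding the prescribed $H^{+}$ must be exhibited to certify affirmativity.
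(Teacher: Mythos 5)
Your proposal takes essentially the same route as the paper: establish surjectivity of $\nu_{F/F^{+},F_0}$ via Proposition \ref{niab}(ii), invoke Proposition \ref{sjcc} to reduce the question to the existence of a totally ramified $F'$ (equivalently a subgroup $H'$ with $H'I=G$) making $\nu_{F/F^{+},F_0*}\!\mid_{\Gamma_{F'}}$ surjective, and then run through the four structural cases of Proposition \ref{abc2}. Your explicit module-theoretic reformulation---negativity holds iff $H^{+}$ lies in every $H'$ with $H'I=G$, verified by analyzing $\pi(\Z[G]^{H'})$---is a correct and transparent packaging of the divisibility checks the paper performs case by case via its repeated appeal to ``the same argument as Theorem \ref{rpng}.''
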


\begin{rem}
Theorems \ref{rpng} and \ref{r2ng} give a complete determination of all pairs of abelian extensions $F/F^{+}$ of $F_0$ for which ($R_{F/F^{+},F_0}$) is negative in the case $p>2$ or $F_0=\Q_2$. Otherwise, it will be difficult for a complete study since the (wild) inertia group of $F/F_0$ may become more complicated. 
\end{rem}

By Theorem \ref{rpng}, we can derive some positivities of ($R_{F/F^{+},F_0}$):

\begin{cor}\label{m3af}
\emph{The question ($R_{F/F^{+},F_0}$) is affirmative if one of the following hold: 
\begin{enumerate}
\item $F^{+}/F_0$ is unramified, 
\item $[F:F_0]\not\in 8\Z$. 
\end{enumerate}}
\end{cor}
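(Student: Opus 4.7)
The plan is to work by contraposition from Theorems \ref{rpng} and \ref{r2ng}, which together enumerate all negative cases of $(R_{F/F^{+},F_0})$ whenever the inertia group $I$ of $F/F_0$ is cyclic or isomorphic to $\Z/2^{m}\times\Z/2$. The key observation is that each of the hypotheses (i) and (ii) forces $I$ to be small enough for one of those two theorems to apply.

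For (i), since $H^{+}:=\Gal(F/F^{+})$ has order $2$ and $F/F_0$ is abelian (so that $F^{+}/F_0$ is Galois with group $G/H^{+}$), the inertia group of $F^{+}/F_0$ equals $IH^{+}/H^{+}$. The hypothesis that $F^{+}/F_0$ is unramified is therefore equivalent to $I\subseteq H^{+}$, forcing $|I|\leq 2$ and hence $I$ cyclic. Thus Theorem \ref{rpng} lists the possible negative configurations. I then check directly, from the explicit presentations there, that in both cases the distinguished generator of $I$ --- $(1,0)$ in case (i) and $(1,2^{n-u})$ in case (ii) --- has first coordinate $1$, while every element of $H^{+}$ has first coordinate $0$. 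So $I\not\subseteq H^{+}$ in each negative case, contradicting the unramifiedness of $F^{+}/F_0$.

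For (ii), the 2-power degree assumption of Section \ref{abex} combined with $[F:F_0]\notin 8\Z$ forces $[F:F_0]\in\{1,2,4\}$, so $|I|\leq 4$. An abelian 2-group of order at most $4$ is either cyclic or $\Z/2\times\Z/2$, so one of Theorem \ref{rpng} or Theorem \ref{r2ng} (the latter with $m=1$, which requires residue characteristic $2$) applies. It then suffices to read $|G|$ off from each listed negative case: $|G|=2^{m+n}$ in Theorem \ref{rpng} (with $m+n\geq 3$ in (i) and $m+n\geq 4$ in (ii)) and $|G|=2^{m+n+1}$ in the four cases of Theorem \ref{r2ng} (where the stated inequalities on $m$, $n$, $u$ always yield $m+n+1\geq 3$). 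Hence $|G|\geq 8$ in every negative case, contradicting $[F:F_0]<8$.

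The main task is the bookkeeping: verifying the inertia-versus-$H^{+}$ comparison in (i), and confirming the bound $|G|\geq 8$ in the borderline cases of Theorem \ref{r2ng}, such as (ii) at $m=n=1$ and (iv) at $u=0$ and $m=n=1$, where the estimate is tight.
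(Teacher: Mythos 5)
Your proof is correct and follows the same route the paper intends --- the paper's own proof of this corollary is a single line citing Theorems \ref{rpng} and \ref{r2ng}, and you have supplied exactly the verification that citation is meant to invoke: in (i) you observe that unramifiedness of $F^{+}/F_0$ is equivalent to $I\subseteq H^{+}$, hence $|I|\leq 2$ and Theorem \ref{rpng} applies, and you check that in both negative cases there the generator of $I$ has first coordinate $1$ while $H^{+}$ sits inside the second factor, so $I\not\subseteq H^{+}$; in (ii) you observe that the $2$-power standing hypothesis forces $[F:F_0]\in\{1,2,4\}$, so $I$ is cyclic or $\Z/2\times\Z/2$ (the latter only when $p=2$, so one of the two theorems always applies), and you confirm that every listed negative configuration has $|G|\geq 8$, with equality attained in the borderline cases of Theorem \ref{r2ng}. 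The bookkeeping is accurate, including the tight cases you flag.
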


\begin{proof}
These follow from Theorems \ref{rpng} and \ref{r2ng}. 
\end{proof}

We prove Theorems \ref{rpng} and \ref{r2ng} in the sequel. 

\begin{proof}[Proof of Theorem \ref{rpng}]
First, suppose that $G$ satisfies Proposition \ref{abna} (i). We may assume $m,n>0$, since ($R_{F/F^{+},F_0}$) becomes affirmative by Proposition \ref{ctsa} (ii) if $m=0$ or $n=0$. Then, $\nu_{F/F^{+},F_0}$ is surjective on $F_0$-valued points by Proposition \ref{niab} (ii). Moreover, $H^{+}$ is equal to either $\langle(2^{m-1},0)\rangle$, $\langle(0,2^{n-1})\rangle$ or $\langle(2^{m-1},2^{n-1})\rangle$. In the first and the third cases, the subgroup $H'$ of $G$ corresponding to $\langle (0,1)\rangle$ satisfies the surjectivity of $\nu_{F/F^{+},F_0*}\!\mid_{H'}$. Next, consider the second case. Note that $F/F^{+}$ is unramified since $\langle(0,2^{n-1})\rangle$ is not contained in $\langle (1,0)\rangle$. If $m\geq n$, then the subgroup $H'$ of $G$ corresponding to $\langle (1,1)\rangle$ satisfies the surjectivity of $\nu_{F/F^{+},F_0*}\!\mid_{H'}$. Next, we assume $(u=)\,m<n$. Let $H'$ be a subgroup of $G$ such that $H'\cdot \langle (1,0)\rangle=G$. Then $H'$ contains an element of the form $(a,1)$, where $a\in \Z$. However, we have $2^{n-1}(a,1)=(0,2^{n-1})$, which implies that $\nu_{F/F^{+},F_0*}\!\mid_{H'}$ is not surjective. Note that this is the negative condition (i). 

Second, suppose that $G$ satisfies Proposition \ref{abna} (ii). We may assume $u>0$. Indeed, if $u=0$, ($R_{F/F^{+},F_0}$) becomes affirmative by Proposition \ref{ctsa} (ii). Then, $\nu_{F/F^{+},F_0}$ is surjective on $F_0$-valued points by Proposition \ref{niab} (ii). Moreover, $H^{+}$ is equal to either $\langle (2^{u-1},0)\rangle$, $\langle (0,2^{m+n-u-1})\rangle$ or $\langle (2^{u-1},2^{m+n-u-1})\rangle$. In the first and the third cases, the subgroup $H'$ of $G$ corresponding to $\langle (0,1)\rangle$ satisfies the surjectivity of $\nu_{F/F^{+},F_0*}\!\mid_{H'}$. On the other hand, consider the second case. Note that $F/F^{+}$ is ramified since $(0,2^{m+n-u-1})=2^{m-1}(1,2^{n-u})$. Moreover, the assumption on $u$ implies the inequality $u<m+n-u-1$. Now let $H'$ be a subgroup of $G$ such that $H'\cdot \langle (1,2^{n-u})\rangle=G$. Then $H'$ contains an element of the form $(a,1)$, where $a\in \Z$. However, we have $2^{m+n-u-1}(a,1)=(0,2^{m+n-u-1})$, which implies that $\nu_{F/F^{+},F_0*}\!\mid_{H'}$ is not surjective. Note that this is the negative condition (ii). 
\end{proof}

\begin{proof}[Proof of Theorem \ref{r2ng}]
We may assume $n>0$, since ($R_{F/F^{+},F_0}$) becomes affirmative by Proposition \ref{ctsa} (iii) if $n=0$. Moreover, the hypothesis and Proposition \ref{niab} (ii) imply that $\nu_{F/F^{+},F_0}$ is surjective on $F_0$-valued points. Hence it suffices to consider Proposition \ref{sjcc} (ii). 

First, suppose that $G$ satisfes Proposition \ref{abc2} (i). Then the same argument as the proof Theorem \ref{rpng} (i) implies that Proposition \ref{sjcc} (ii) does not hold if and only if $H^{+}=\langle (0,2^{n-1},0) \rangle$ and $(u=)\,m<n$. Note that this condition is identical to (i). 

Second, suppose that $G$ satisfes Proposition \ref{abc2} (ii). Then the same argument as Theorem \ref{rpng} (i) implies that Proposition \ref{sjcc} (ii) does not hold if and only if $H^{+}=\langle (0,2^{n}) \rangle$ and $m\leq n$, which is the condition (ii). 

Third, suppose that $G$ satisfes Proposition \ref{abc2} (iii). Then the same argument as the proof Theorem \ref{rpng} (ii) implies that Proposition \ref{sjcc} (ii) does not hold if and only if $H^{+}=\langle (0,2^{m+n-u_1-1},0) \rangle$. Note that this condition is identical to (iii). 

Finally, suppose that $G$ satisfes Proposition \ref{abc2} (iv). Then the same argument as Theorem \ref{rpng} (ii) implies that Proposition \ref{sjcc} (ii) does not hold if and only if $H^{+}=\langle (0,2^{m+n-u_1-1}) \rangle$, which is the condition (iv). 
\end{proof}

\subsection{Kottwitz maps of tori}\label{kwmp}

For a non-archimedean local field $F$, we denote by $\Fb$ the completion of the maximal unramified extension of $F$ in $F^{\sep}$. Here we recall the Kottwitz maps of tori over $\Fb_0$ defined by \cite[7.1--7.3]{Kottwitz1997}, that are functorial surjective homomorphisms
\begin{equation*}
\kappa_{\breve{T}}\colon \breve{T}(\Fb_0)\rightarrow X_{*}(\breve{T})_{\Gamma_{\Fb_0}}. 
\end{equation*}

\textbf{Case 1.~$\breve{T}$ is induced over $\Fb_0$. }
Suppose that there is an isomorphism of $\Fb_0$-tori
\begin{equation*}
\breve{T}=\Res_{\Fb_1/\Fb_0}\G_m\times \cdots \times \Res_{\Fb_r/\Fb_0}\G_m, 
\end{equation*}
where $\Fb_i$ is a field for any $i$. Then we have $X_{*}(\breve{T})_{\Gamma_{\Fb_0}}\cong \Z^{\oplus r}$. Now we define $\kappa_{\breve{T}}$ as the following diagram becomes commutative: 
\begin{equation*}
\xymatrix{
\breve{T}(\Fb_0)\ar[r]^{\kappa_{\breve{T}}}\ar[d]^{\cong} & X_{*}(\breve{T})_{\Gamma_{\Fb_0}}\ar[d]^{\cong}\\
\prod_{i=1}^{r}\Fb_i^{\times}\ar[r]^{\hspace{2mm}(\val_{\Fb_i})_{i}} & \Z^{\oplus r}. }
\end{equation*}
Here the lower horizontal map is given by $(t_i)_{i}\mapsto (\ord_{\Fb_i}(t_i))_{i}$. Note that the surjectivity of $\kappa_{T}$ follows by definition. 

\textbf{Case 2.~General case. }
Take an induced torus $\breve{P}$ over $\Fb_0$ and a surjective homomorphism $\breve{P}\rightarrow \breve{T}$. Then both $X_{*}(\breve{P})\rightarrow X_{*}(\breve{T})$ and $\breve{P}(\Fb_0)\rightarrow \breve{T}(\Fb_0)$ are surjective. We define $\kappa_{\breve{T}}$ as the following diagram becomes commutative: 
\begin{equation*}
\xymatrix{
\breve{P}(\Fb_0)\ar[r]^{\kappa_{\breve{P}}\hspace{3mm}}\ar[d] & X_{*}(\breve{P})_{\Gamma_{\Fb_0}} \ar[d] \\
\breve{T}(\Fb_0)\ar[r]^{\kappa_{\breve{T}}\hspace{3mm}} & X_{*}(\breve{T})_{\Gamma_{\Fb_0}}. }
\end{equation*}
The well-definedness of $\kappa_{\breve{T}}$ and the functoriality is proved in \cite[7.3]{Kottwitz1997}. 

Now let $T$ be a torus over $F_0$. Recall that $\sigma$ is a lift of the $q$-Frobenius on $F_0$, where $q$ is the cardinality of the residue field of $F_0$. As in \cite[p.300]{Kottwitz1997}, the Kottwitz map $\kappa_{T_{\Fb_0}}$ induces a surjective homomorphism
\begin{equation*}
\kappa_{T}\colon T(F_0)\rightarrow X_{*}(T)_{I_{F_0}}^{\sigma}. 
\end{equation*}

We consider the target of the Kottwitz map of $T_{F/F^{+},F_0}$. Until Section \ref{vrrp}, write
\begin{equation*}
F^{+}=F_{1}^{+}\times \cdots \times F_{r}^{+},\quad F=F_{1}\times \cdots \times F_{r}, 
\end{equation*}
where $F_{j}^{+}$ is a field and $F_{j}$ is an {\'e}tale quadratic algebra over $F_{j}^{+}$. 

\begin{prop}\label{cceq}
\emph{Assume that $F_{i}$ is not a ramified quadratic extension of $F_{i}^{+}$ for all $i$. Then $X_{*}(T_{F/F^{+},F_0})_{I_{F_0}}$ is torsion-free. }
\end{prop}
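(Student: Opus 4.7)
The plan is to prove the stronger statement that $T_{F/F^{+},F_0}\otimes_{F_0}\Fb_0$ is induced over $\Fb_0$. Once this is established, Proposition \ref{treq}(iii) identifies $X_{*}(T_{F/F^{+},F_0})$ with $X_{*}(T_{F/F^{+},F_0}\otimes_{F_0}\Fb_0)$ as a $\Gamma_{\Fb_0}=I_{F_0}$-module. The cocharacter module of an induced torus is a permutation $\Gamma_{\Fb_0}$-module, and the coinvariants of any permutation module are a direct sum of copies of $\Z$ indexed by the orbits, hence torsion-free. So this implication delivers what we want.

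To obtain the inducedness, I would first verify that under the hypothesis
\begin{equation*}
F\otimes_{F_0}\Fb_0 \;\cong\; (F^{+}\otimes_{F_0}\Fb_0)\times (F^{+}\otimes_{F_0}\Fb_0)
\end{equation*}
as \'etale quadratic algebras over $F^{+}\otimes_{F_0}\Fb_0$. Working factor by factor on $i$, the case $F_i=F_i^{+}\times F_i^{+}$ is trivial; when $F_i/F_i^{+}$ is unramified quadratic, I would decompose $F_i^{+}\otimes_{F_0}\Fb_0$ as a product of local fields, each having the algebraic closure of $\F_{q}$ as residue field. Since this residue field already contains the residue field of $F_i$, Hensel's lemma splits $F_i/F_i^{+}$ after the base change. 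Next, writing $F^{+}\otimes_{F_0}\Fb_0=\prod_{\beta}\Fb_{\beta}^{+}$ as a product of fields and invoking Proposition \ref{trid}(iii) over $\Fb_0$, I would obtain a Cartesian square relating $T_{F/F^{+},F_0}\otimes_{F_0}\Fb_0$ to $\prod_{\beta}T_{(\Fb_{\beta}^{+}\times \Fb_{\beta}^{+})/\Fb_{\beta}^{+},\Fb_0}$. By Proposition \ref{trid}(ii) each factor is $\G_m\times \Res_{\Fb_{\beta}^{+}/\Fb_0}\G_m$ with $\nu$ given by the first projection, so the pullback along the diagonal $\G_m\to \prod_{\beta}\G_m$ collapses all the $\G_m$-factors into a single copy, yielding
\begin{equation*}
T_{F/F^{+},F_0}\otimes_{F_0}\Fb_0 \;\cong\; \G_m\times \Res_{(F^{+}\otimes_{F_0}\Fb_0)/\Fb_0}\G_m,
\end{equation*}
which is manifestly induced over $\Fb_0$.

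The essential content is the \'etale-algebra splitting above: it reduces to the elementary fact that $\Fb_0$ absorbs every unramified extension of $F_0$, so the unramified quadratic extension $F_i/F_i^{+}$ trivializes upon scalar extension. Once this is in hand, the deduction is a routine unwinding of the product descriptions of $T_{k/k^{+},k_0}$ in Proposition \ref{trid}, and the torsion-freeness of the coinvariants follows formally from the resulting induced structure.
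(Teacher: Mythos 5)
Your proof is correct and follows essentially the same path as the paper: both establish that $T_{F/F^{+},F_0}\otimes_{F_0}\Fb_0$ is induced over $\Fb_0$ via Proposition \ref{trid}(ii), using the fact that the unramified (or split) quadratic extensions $F_i/F_i^{+}$ trivialize over $\Fb_0$, and then conclude torsion-freeness of the $I_{F_0}$-coinvariants from the permutation-module structure. You merely spell out the intermediate \'etale-algebra splitting and the Cartesian-square collapse of the $\G_m$-factors, which the paper leaves implicit.
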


\begin{proof}
By assumption, we have
\begin{equation*}
T_{F/F^{+},F_0}\otimes_{F_0}\Fb_0\cong \G_m \times \prod_{j=1}^{r}\Res_{(F_{j}^{+}\otimes_{F_0}\Fb_0)/\Fb_0}\G_m, 
\end{equation*}
by Proposition \ref{trid} (ii). In particular, $T_{F/F^{+},F_0}$ is induced over $\Fb_0$. Hence the assertion follows. 
\end{proof}

\begin{dfn}\label{cdar}
We say that $F/F^{+}$ satisfies (r) if it satisfies the two conditions as follows: 
\begin{itemize}
\item $F_1=\cdots =F_r$ and $F_{1}^{+}=\cdots =F_r^{+}$, 
\item $F_1/F_0$ is abelian and $F_1/F_1^{+}$ is ramified. 
\end{itemize}
\end{dfn}

\begin{lem}\label{ts2t}
\emph{Assume that $F/F^{+}$ satisfies (r). 
\begin{enumerate}
\item The abelian group $X_{*}(T_{F/F^{+},F_0}^{1})_{I_{F_0}}$ is a finite direct sum of $\Z/2$. 
\item For any $d\not\in 2\Z$, the multiplication by $d$ on $X_{*}(T_{F/F^{+},F_0})_{I_{F_0}}$ is injective. 
\end{enumerate}}
\end{lem}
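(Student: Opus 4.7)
The plan is to reduce both assertions to a computation of the $I_{F_0}$-coinvariants of $X_{*}(T_{F/F^{+},F_0}^{1})$, and then bootstrap (ii) from (i) via the defining short exact sequence in Proposition \ref{trid} (i). Write $k := F_1$, $k^{+} := F_1^{+}$, $G := \Gal(k/F_0)$, $H^{+} := \Gal(k/k^{+}) = \{1,h\}$, and let $I \subset G$ denote the inertia subgroup of $k/F_0$. Since $F = \prod_{i=1}^{r}k$ and $F^{+}=\prod_{i=1}^{r}k^{+}$ under (r), the norm $\N_{F/F^{+}}$ decomposes as a product of copies of $\N_{k/k^{+}}$, and hence $T_{F/F^{+},F_0}^{1} \cong \prod_{i=1}^{r}T_{k/k^{+},F_0}^{1}$. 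On cocharacter lattices, $X_{*}(T_{k/k^{+},F_0}^{1})$ is the kernel of the surjection $\Z[G] \to \Z[G/H^{+}]$, i.e., the ideal $(1-h)\Z[G]$, which is naturally isomorphic as a $G$-module to $\Ind_{H^{+}}^{G}\Z_{-}$, where $\Z_{-}$ denotes the sign character of $H^{+}$.

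For (i), the assumption (r) says that $k/k^{+}$ is ramified, which is equivalent to $h \in I$, i.e., $H^{+} \subset I$. Applying the Mackey decomposition (which simplifies considerably here since $G$ is abelian, so all conjugations on modules are trivial), one obtains
\begin{equation*}
(\Ind_{H^{+}}^{G}\Z_{-})_{I} \cong \bigoplus_{g \in I\backslash G/H^{+}}(\Z_{-})_{I \cap H^{+}} = \bigoplus_{g \in I\backslash G}(\Z_{-})_{H^{+}} \cong (\Z/2)^{[G:I]},
\end{equation*}
where the middle step uses $H^{+} \subset I$ (making the double coset space collapse to $I\backslash G$), and $(\Z_{-})_{H^{+}} = \Z/2$ is the coinvariants of the sign character under a group of order two. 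Since the $\Gamma_{F_0}$-action on $X_{*}$ factors through $G$ and $I_{F_0}$ surjects onto $I$, the $I_{F_0}$-coinvariants agree with the $I$-coinvariants, so
\begin{equation*}
X_{*}(T_{F/F^{+},F_0}^{1})_{I_{F_0}} \cong (\Z/2)^{r[G:I]},
\end{equation*}
a finite direct sum of $\Z/2$'s, proving (i).

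For (ii), I would apply the right-exact functor of $I_{F_0}$-coinvariants to the short exact sequence
\begin{equation*}
0 \to X_{*}(T_{F/F^{+},F_0}^{1}) \to X_{*}(T_{F/F^{+},F_0}) \to X_{*}(\G_m) \to 0
\end{equation*}
induced by Proposition \ref{trid} (i), noting that $X_{*}(\G_m) = \Z$ with trivial action. This yields the right-exact sequence
\begin{equation*}
X_{*}(T_{F/F^{+},F_0}^{1})_{I_{F_0}} \to X_{*}(T_{F/F^{+},F_0})_{I_{F_0}} \to \Z \to 0.
\end{equation*}
Since $\Z$ is torsion-free, every torsion element of $X_{*}(T_{F/F^{+},F_0})_{I_{F_0}}$ lies in the image of the first arrow, and by (i) is therefore killed by $2$. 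Thus the torsion subgroup of $X_{*}(T_{F/F^{+},F_0})_{I_{F_0}}$ is $2$-primary, and for any odd $d$ multiplication by $d$ is injective.

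The main obstacle I anticipate is pinning down the identification of $X_{*}(T_{k/k^{+},F_0}^{1})$ as the induced module $\Ind_{H^{+}}^{G}\Z_{-}$ and carefully checking that assumption (r) yields the containment $H^{+} \subset I$, which is what makes the sign-character coinvariants appear and forces the $2$-torsion phenomenon. Once this Galois-theoretic identification is in place, both (i) and (ii) follow formally.
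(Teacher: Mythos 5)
Your proof is correct, and its core mechanism coincides with the paper's: condition (r) puts the nontrivial element $h$ of $\Gal(F_1/F_1^{+})$ inside the inertia subgroup $I$ of $\Gal(F_1/F_0)$, and $h$ acts by $-1$ on $X_{*}(T_{F/F^{+},F_0}^{1})$, which forces the $I_{F_0}$-coinvariants to be killed by $2$. Your part (ii) is verbatim the paper's: push the short exact sequence of Proposition~\ref{trid}~(i) through the right-exact coinvariants functor, note the torsion of $X_{*}(T_{F/F^{+},F_0})_{I_{F_0}}$ lands in the image of the $2$-torsion group from (i), and conclude.

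The only divergence is in part (i), where you take a heavier but also more informative route. The paper is content with the one-line observation that $h$ acts by $-1$ and $h\in I$, so $2m = m - h\cdot m$ is trivial in coinvariants and hence $X_{*}(T^{1})_{I_{F_0}}$ is a finitely generated $2$-torsion group. You instead identify $X_{*}(T_{k/k^{+},F_0}^{1})\cong (1-h)\Z[G]\cong \Ind_{H^{+}}^{G}\Z_{-}$ (which is correct, and is exactly where the ``$h$ acts by $-1$'' fact is encoded) and then run Mackey's formula to compute the coinvariants exactly as $(\Z/2)^{\oplus r[G:I]}$. That finer conclusion is not needed for this lemma, but it does agree with, and in fact anticipates without the cyclicity or $2$-power hypotheses, the more explicit computation the paper performs later in Proposition~\ref{orbp}. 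Either route is fine; yours just does more work than the statement demands.
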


\begin{proof}
(i): This follows from the fact that the generator of $\Gal(F_1/F_1^{+})$ acts on $X_{*}(T_{F/F^{+},F_0}^{1})$ by the $(-1)$-multiple. 

(ii): The upper exact sequence in Proposition \ref{trid} (i) induces an exact sequence
\begin{equation*}
X_{*}(T_{F/F^{+},F_0}^{1})_{I_{F_0}}\rightarrow X_{*}(T_{F/F^{+},F_0})_{I_{F_0}}\xrightarrow{\nu_{F/F^{+},F_0*}} \Z \rightarrow 0. 
\end{equation*}
Since $X_{*}(T_{F/F^{+},F_0}^{1})_{I_{F_0}}$ is annihilated by $2$ by (i), the assertion is obtained. 
\end{proof}

\begin{lem}\label{ccr2}
\emph{Assume that $F/F^{+}$ satisfies (r). Let $F'_{1}/F_0$ be a subextension of $F_1/F_0$ such that $F_1/F'_{1}$ is of degree $d\not\in 2\Z$. Set $F'^{+}_1:=F'_1\cap F_1^{+}$, $F':=\prod_{r}F'_{1}$ and $F'^{+}:=\prod_{r}F'^{+}_1$. 
\begin{enumerate}
\item If $F_1/F'_1$ is totally ramified, then the canonical map $i\colon T_{F'/F'^{+}}\hookrightarrow T_{F/F^{+}}$ induces an isomorphism
\begin{equation*}
X_{*}(T_{F'/F'^{+},F_0})_{I_{F_0}} \cong X_{*}(T_{F/F^{+},F_0})_{I_{F_0}}. 
\end{equation*}
\item If $F_1/F'_1$ is unramified, then the canonical map $i\colon T_{F'/F'^{+}}\hookrightarrow T_{F/F^{+}}$ induces an isomorphism
\begin{equation*}
X_{*}(T_{F'/F'^{+},F_0})_{I_{F_0}}^{\sigma} \cong X_{*}(T_{F/F^{+},F_0})_{I_{F_0}}^{\sigma}. 
\end{equation*}
\end{enumerate}}
\end{lem}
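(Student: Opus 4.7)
My plan is first to reduce to the case $r=1$, and then to carry out an explicit Mackey-style computation of the $I_{F_0}$-coinvariants of the relevant cocharacter lattices, comparing these for $F/F^{+}$ and $F'/F'^{+}$.

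The Cartesian square in Proposition \ref{trid} (iii), applied to both $F/F^{+}$ and $F'/F'^{+}$, yields a $\Gamma_{F_0}$-equivariant direct-sum decomposition
\begin{equation*}
X_{*}(T_{F/F^{+},F_0}) \cong X_{*}(T_{F_1/F_1^{+},F_0}) \oplus X_{*}(T_{F_1/F_1^{+},F_0}^{1})^{r-1}
\end{equation*}
via $(c_1,\ldots,c_r)\mapsto (c_1,\, c_2-c_1,\ldots,\, c_r-c_1)$, and likewise for $F'$. Since $i_{*}$ preserves these splittings, it suffices to treat the case $r=1$ for both $T_{F_1/F_1^{+},F_0}$ and its norm-one subtorus. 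Assume $r=1$ and set $G:=\Gal(F_1/F_0)$, $H^{+}:=\Gal(F_1/F_1^{+})$, $G':=\Gal(F_1/F_1')$, $H'^{+}:=\Gal(F_1/F_1'^{+})$, with $I\subseteq G$ the inertia subgroup. Let $M:=X_{*}(T_{F_1/F_1^{+},F_0})$ and $M^{1}$ its norm-one part; then $M^{1}\cong \Ind_{H^{+}}^{G}(\epsilon)$ for $\epsilon$ the sign character of $H^{+}\cong\Z/2$, and an analogous description holds for $M'^{1}$ via inflation from $G/G'$. Since $\N_{F_1/F_1^{+}}$ restricts to $\N_{F_1'/F_1'^{+}}$ on $F_1'$, the map $i_{*}$ respects the short exact sequence $0\to M^{1}\to M\to \Z\to 0$ and acts as the identity on $\Z$; a five-lemma argument on the long exact sequences of $I$-coinvariants (and, for (ii), their $\sigma$-invariants) reduces both claims to the corresponding statement for $M^{1}$ in place of $M$.

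For the explicit computation, condition (r) gives $H^{+}\subseteq I$, and an elementary ramification-index argument shows that $F_1'/F_1'^{+}$ is also ramified and that $H'^{+}\subseteq IG'$. Applying Mackey's formula for coinvariants (noting $G$ is abelian) then yields
\begin{equation*}
M^{1}_{I}\cong (\Z/2)[G/I], \qquad M'^{1}_{I}\cong (\Z/2)[G/IG'],
\end{equation*}
with bases given by the classes of $g-g\tau$ for $\tau$ a generator of $H^{+}$. A short calculation shows that the map $i_{*}$ sends $[\bar{g}]\in M'^{1}_{I}$ to $\sum_{g'\in G'}[gg'\bmod I]$ in $M^{1}_{I}$.

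For (i), $F_1/F_1'$ is totally ramified of odd degree $d$, so $G'\subseteq I$; then $[gg'\bmod I]=[g\bmod I]$ and $i_{*}([\bar g])= d[\bar g]\equiv [\bar g]\pmod{2}$. Since $G/IG'=G/I$, the map $i_{*}$ is a bijection on bases, hence an isomorphism. For (ii), $F_1/F_1'$ is unramified of odd degree $d$, so $G'\cap I=\{1\}$ and $G/I$ is cyclic of order $f(F_1/F_0)$ generated by $\sigma$, with $G'\subseteq G/I$ equal to $\langle\sigma^{f(F_1'/F_0)}\rangle$. The $\sigma$-invariants of the permutation modules $(\Z/2)[G/I]$ and $(\Z/2)[G/IG']$ are each $\Z/2$, generated by the sum of all basis elements, and a direct computation (partitioning exponents $i\in\{0,\ldots,f(F_1/F_0)-1\}$ uniquely as $j+k\,f(F_1'/F_0)$) shows that $i_{*}$ carries one generator to the other. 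The main obstacle is the bookkeeping in the Mackey decomposition and the verification of the explicit formula for $i_{*}$; the five-lemma bootstrap from $M^{1}$ to $M$ also requires care, since the connecting term $H_1(I,\Z)\cong I$ (using that $I$ is abelian) may be nonzero, but it cancels in the comparison by naturality.
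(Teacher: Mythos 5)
Your approach is genuinely different from the paper's. The paper works functorially: it observes that the composites $T_{F'/F'^{+},F_0}\xrightarrow{i}T_{F/F^{+},F_0}\xrightarrow{\N_{F/F'}}T_{F'/F'^{+},F_0}$ and $T_{F/F^{+},F_0}\xrightarrow{\N_{F/F'}}T_{F'/F'^{+},F_0}\xrightarrow{i}T_{F/F^{+},F_0}$ are both multiplication by $d$, then uses Lemma~\ref{ts2t} (odd multiplication is injective on $X_{*}(\cdot)_{I_{F_0}}$, the torsion is $2$-torsion) to get injectivity and, on the torsion part, surjectivity, never leaving the coinvariant lattices. You instead reduce to $r=1$, split off the norm-one part $M^{1}$, identify $M^{1}\cong\Ind_{H^{+}}^{G}\epsilon$, do a Mackey computation of $M^{1}_{I}$, and compare via an explicit formula for $i_{*}$. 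The reduction to $r=1$ and the Mackey computation look fine, and part (i) then works: there $i_{*}\colon M'^{1}_{I}\to M^{1}_{I}$ is genuinely an isomorphism and the five-lemma on the four-term $I$-coinvariant sequences (with naturality of the $H_{1}(I,\Z)$ connecting term) closes the argument.

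For part (ii) there is a real gap. In the unramified case $i_{*}\colon M'^{1}_{I}\to M^{1}_{I}$ is \emph{not} an isomorphism: it is the inclusion $(\Z/2)^{[G:IG']}\hookrightarrow(\Z/2)^{[G:I]}$, which has odd index $d>1$ when $F_1\neq F_1'$. Only the $\sigma$-invariants agree. So the ``five-lemma argument on the long exact sequences of $I$-coinvariants and their $\sigma$-invariants'' does not go through as stated: taking $\sigma$-invariants of an exact sequence is only left exact, and the resulting long exact sequence in $\sigma$-cohomology involves $\sigma$-\emph{co}invariants of the relevant submodules as well. To conclude $M'_{I}{}^{\sigma}\cong M_{I}{}^{\sigma}$ from the short exact sequence $0\to N\to M_{I}\to\Z\to 0$ (where $N=\mathrm{im}(M^{1}_{I}\to M_{I})$) you need, at minimum, $i_{*}\colon N'^{\sigma}\to N^{\sigma}$ iso \emph{and} $i_{*}\colon N'_{\sigma}\to N_{\sigma}$ injective, plus compatibility of the connecting maps $\Z\to N_{\sigma}$, none of which follows from the $(M^{1}_{I})^{\sigma}$ computation alone, since $N\neq M^{1}_{I}$ (the kernel of $M^{1}_{I}\to M_{I}$ is the nonzero image of $H_{1}(I,\Z)$). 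The concern you flag — that $H_{1}(I,\Z)$ may be nonzero but ``cancels by naturality'' — is indeed harmless for (i), but it is not the issue in (ii); the issue in (ii) is that $N'\to N$ fails to be an isomorphism, so the five-lemma hypothesis on the neighboring maps is simply false. One clean repair is to import the paper's injectivity step: $\N_{F/F'}\circ i$ is multiplication by $d$, so $i_{*}$ is injective on $X_{*}(T_{F'/F'^{+},F_0})_{I_{F_0}}^{\sigma}$ by Lemma~\ref{ts2t}, and couple that with a direct identification (as in Proposition~\ref{orbp}) of the $\Z$-free rank-one $\sigma$-fixed summand of $M_{I}$ and the surjectivity on torsion; your explicit $\sigma$-invariant computation for $M^{1}_{I}$ then supplies the missing surjectivity input.
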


\begin{proof}
By definition, the composite
\begin{equation*}
T_{F'/F'^{+},F_0}\xrightarrow{i}T_{F/F^{+},F_0}\xrightarrow{\N_{F/F'}}T_{F'/F'^{+},F_0}
\end{equation*}
is given by $t\mapsto t^{d}$. Hence the map
\begin{equation*}
i_{*}\colon X_{*}(T_{F'/F'^{+},F_0})_{I_{F_0}}\rightarrow X_{*}(T_{F/F^{+},F_0})_{I_{F_0}}
\end{equation*}
is injective by Lemma \ref{ts2t}. Moreover, there is a commutative diagram
\begin{equation*}
\xymatrix{
X_{*}(T_{F'/F'^{+},F_0})_{I_{F_0}}\ar[rd]_{\nu_{F'/F'^{+},F_0*}\hspace{3mm}} \ar[r]^{i_{*}} & X_{*}(T_{F/F^{+},F_0})_{I_{F_0}} \ar[d]^{\nu_{F/F^{+},F_0*}} \\
& \Z. }
\end{equation*}

(i): The map 
\begin{equation*}
T_{F/F^{+},F_0}\xrightarrow{\N_{F/F'}}T_{F'/F'^{+},F_0}\xrightarrow{i}T_{F/F^{+},F_0}
\end{equation*}
induces the multiplication by $d$ on $X_{*}(T_{F/F^{+},F_0})_{I_{F_0}}$, which is injective by Lemma \ref{ts2t}. Hence the map $X_{*}(T_{F'/F'^{+},F_0})_{I_{F_0},\tor}\rightarrow X_{*}(T_{F/F^{+},F_0})_{I_{F_0},\tor}$ induced by $i_{*}$ is surjective. Hence the assertion follows. 

(ii): The map 
\begin{equation*}
T_{F/F^{+},F_0}\xrightarrow{\N_{F/F'}}T_{F'/F'^{+},F_0}\xrightarrow{i}T_{F/F^{+},F_0}
\end{equation*}
induces the multiplication by $d$ on $X_{*}(T_{F/F^{+},F_0})_{I_{F_0}}^{\sigma}$, which is injective by Lemma \ref{ts2t}. Hence the map $X_{*}(T_{F'/F'^{+},F_0})_{I_{F_0},\tor}\rightarrow X_{*}(T_{F/F^{+},F_0})_{I_{F_0},\tor}$ induced by $i_{*}$ is surjective. On the other hand, the assumption $d\not\in 2\Z$ implies that $\nu_{F/F^{+},F_0*}\colon X_{*}(T_{F/F^{+},F_0})_{I_{F_0}}^{\sigma}\rightarrow \Z$ is surjective if and only if so is for $\nu_{F'/F'^{+},F_0*}\colon X_{*}(T_{F'/F'^{+},F_0})_{I_{F_0}}^{\sigma}\rightarrow \Z$. Therefore the assertion follows. 
\end{proof}

In the sequel of Section \ref{vrrp}, we use the notations in Section \ref{abex} for $F_1/F_0$ if its degree is a power of $2$. Moreover, for $N\in \Zpn$, we denote by $\ebar_{N,1}\ldots,\ebar_{N,N}$ the standard basis of $(\Z/2)^{\oplus N}$ as an $\F_2$-vector space. 

\begin{prop}\label{orbp}
\emph{Assume that $F/F^{+}$ satisfies (r), $[F_1:F_0]$ is a power of $2$ and that $I=\Gal(F_1/F_1^{\ur})$ is cyclic. Write $[F_1^{\ur}:F_0]=2^{n}$ where $n\in \Znn$. Then there is a commutative diagram
\begin{equation*}
\xymatrix{
X_{*}(T_{F/F^{+},F_0}^{1})_{I_{F_0}} \ar[r] \ar[d]^{\cong}& X_{*}(T_{F/F^{+},F_0})_{I_{F_0}}\ar[d]^{\cong} \\
(\Z/2)^{\oplus 2^{n}r}\ar[r]^{f_{n,r}\hspace{7mm}}& \Z \oplus (\Z/2)^{\oplus 2^{n}r-1}, }
\end{equation*}
where $f_{n,r}$ is given by
\begin{equation*}
\ebar_{2^{n}r,i}\mapsto
\begin{cases}
(0,\ebar_{2^{n}r-1,i})&\text{if }i\neq fr,\\
(0,\ebar_{2^{n}r-1,1}+\cdots+\ebar_{2^{n}r-1,2^{n}r-1})&\text{otherwise}. 
\end{cases}
\end{equation*}
Moreover, $\sigma$ acts on the lower left-hand side $(\Z/2)^{\oplus 2^{n}r}$ as follows: 
\begin{equation*}
\ebar_{2^{n}r,i}\mapsto
\begin{cases}
\ebar_{2^{n}r,i+r} &\text{if }1\leq i\leq (2^{n}-1)r,\\
\ebar_{2^{n}r,i-(2^{n}-1)g} &\text{otherwise}. 
\end{cases}
\end{equation*}}
\end{prop}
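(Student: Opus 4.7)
The plan is to compute both coinvariants explicitly from the $\Z[G]$-module structure, using a coset decomposition over $\Z[I]$ and the short exact sequence relating $T^{1}_{F/F^{+},F_0}$ and $T_{F/F^{+},F_0}$. Since the Galois action on the relevant lattices factors through $G=\Gal(F_1/F_0)$, the $I_{F_0}$-coinvariants coincide with the $I$-coinvariants. By hypothesis (r) and Proposition \ref{trid}(iii) the problem reduces to the single-factor case together with an $r$-fold sum (for $T^{1}$) or fiber product (for $T$). Identifying $X_{*}(\Res_{F_1/F_0}\G_m)=\Z[G]$ with the left $G$-action, and writing $s$ for the unique element of order $2$ in $H^{+}=\Gal(F_1/F_1^{+})$ (which lies in $I$ because $F_1/F_1^{+}$ is ramified), we have $X_{*}(T_{F_1/F_1^{+},F_0}^{1})=\Z[G](1-s)$, while $X_{*}(T_{F_1/F_1^{+},F_0})$ is the preimage of $\Z\cdot e$ under $\pi\colon \Z[G]\to \Z[G/H^{+}]$, where $e=\sum_{gH^{+}\in G/H^{+}}gH^{+}$. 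These fit in a short exact sequence
\begin{equation*}
0\to X_{*}(T_{F/F^{+},F_0}^{1})\to X_{*}(T_{F/F^{+},F_0})\xrightarrow{\nu_{*}}\Z\to 0
\end{equation*}
of $\Gamma_{F_0}$-modules.

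Next, I decompose $\Z[G]$ as a $\Z[I]$-module. Because $G/I\cong\Z/2^{n}$ is generated by the image of $\sigma$, the elements $\sigma^{0},\dots,\sigma^{2^{n}-1}$ are coset representatives, giving $\Z[G]=\bigoplus_{j=0}^{2^{n}-1}\Z[I]\sigma^{j}$, and hence $\Z[G](1-s)=\bigoplus_{j=0}^{2^{n}-1}\Z[I](1-s)\sigma^{j}$ by commutativity of $G$. The key local computation is $(\Z[I](1-s))_{I}\cong \Z/2$: from $(1-s)(1+s)=0$ one obtains an exact sequence $0\to\Z[I](1+s)\to\Z[I]\xrightarrow{\cdot(1-s)}\Z[I](1-s)\to 0$, and after tensoring with $\Z$ over $\Z[I]$ the leftmost map $\Z\to\Z$ becomes multiplication by $2$ (since $s$ acts trivially on $\Z$), with cokernel $\Z/2$. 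Summing over the $2^{n}$ coset indices and the $r$ copies yields $X_{*}(T_{F/F^{+},F_0}^{1})_{I_{F_0}}\cong(\Z/2)^{\oplus 2^{n}r}$. Indexing basis vectors $\ebar_{2^{n}r,i}$ by $i=jr+k$ with $j\in\{0,\dots,2^{n}-1\}$ and $k\in\{1,\dots,r\}$, left multiplication by $\sigma$ shifts $\sigma^{j}\mapsto \sigma^{j+1}$, which translates to $i\mapsto i+r$ cyclically modulo $2^{n}r$, matching the stated $\sigma$-action.

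For $X_{*}(T_{F/F^{+},F_0})_{I_{F_0}}$, I pass to the long exact sequence of homology,
\begin{equation*}
H_{1}(I,\Z)\xrightarrow{\delta}X_{*}(T_{F/F^{+},F_0}^{1})_{I_{F_0}}\to X_{*}(T_{F/F^{+},F_0})_{I_{F_0}}\to\Z\to 0.
\end{equation*}
Since the second term has exponent $2$ and $H_{1}(I,\Z)=I$ is cyclic with generator $\tau$, $\Ima(\delta)$ is generated by $\delta(\tau)=[(\tau-1)v]$ for any lift $v$ of $1\in\Z$. I take $v=(v_{0},\dots,v_{0})$ with $v_{0}=\bigl(\sum_{k=0}^{2^{m-1}-1}\tau^{k}\bigr)\bigl(\sum_{j=0}^{2^{n}-1}\sigma^{j}\bigr)\in\Z[G]$; the elements $\tau^{k}\sigma^{j}$ form a complete set of representatives of $G/H^{+}$, so $\pi(v_{0})=e$. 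The telescoping identity $(\tau-1)\sum_{k=0}^{2^{m-1}-1}\tau^{k}=\tau^{2^{m-1}}-1=s-1$ yields
\begin{equation*}
(\tau-1)v_{0}=-(1-s)\sum_{j=0}^{2^{n}-1}\sigma^{j},
\end{equation*}
whose class in each summand $(\Z[I](1-s)\sigma^{j})_{I}=\Z/2$ is nontrivial. Hence $[(\tau-1)v]$ is the all-ones vector in $(\Z/2)^{\oplus 2^{n}r}$, $\Ima(\delta)$ is the resulting order-$2$ subgroup, and the quotient $(\Z/2)^{\oplus 2^{n}r-1}$ is naturally identified with the $\F_{2}$-span of $\ebar_{2^{n}r-1,1},\dots,\ebar_{2^{n}r-1,2^{n}r-1}$, under which $\ebar_{2^{n}r,2^{n}r}$ is forced to equal $\ebar_{2^{n}r-1,1}+\cdots+\ebar_{2^{n}r-1,2^{n}r-1}$. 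The residual sequence $0\to(\Z/2)^{\oplus 2^{n}r-1}\to X_{*}(T_{F/F^{+},F_0})_{I_{F_0}}\to\Z\to 0$ splits (free quotient), producing both the isomorphism $\Z\oplus(\Z/2)^{\oplus 2^{n}r-1}$ and the map $f_{n,r}$ in the stated form.

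The main obstacle is the explicit computation of $\delta(\tau)$: choosing the product-form $v_{0}$ is what makes the telescoping transparent and pins down the single linear relation among the $2^{n}r$ basis vectors, namely that their sum vanishes. The remaining identifications (basis enumeration, $\sigma$-action, splitting of the extension) are formal consequences of the coset decomposition used to construct the isomorphisms.
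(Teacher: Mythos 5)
Your proof is correct and takes a genuinely different (and more transparent) route than the paper's own. The paper's proof simply \emph{defines} explicit $\Z$-linear maps $c^{1}_{n,r}$ and $c_{n,r}$ on the $\Z$-bases of $X_{*}(T^{1}_{F/F^{+},F_0})$ and $X_{*}(T_{F/F^{+},F_0})$ displayed there, sending $\sigma^{\nu}\tau^{\mu}(1-\tau^{2^{m-1}})$ (independently of $\mu$) to $\ebar_{2^{n}r,\nu r+i}$, and asserts that these induce the desired commutative diagram on $I_{F_0}$-coinvariants; the verifications that they are well-defined on coinvariants, bijective, and that exactly one $\F_{2}$-linear relation is imposed are left implicit. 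Your route establishes each of those facts: the coset decomposition $\Z[G](1-s)=\bigoplus_{j}\Z[I](1-s)\sigma^{j}$ together with the exact sequence
\begin{equation*}
0\to\Z[I](1+s)\to\Z[I]\xrightarrow{\cdot(1-s)}\Z[I](1-s)\to 0
\end{equation*}
(whose left term is the full annihilator of $1-s$, not just contained in it, as one checks coordinatewise) pins down $(\Z[I](1-s))_{I}\cong\Z/2$, giving the left-hand vertical isomorphism; and the long exact sequence in $I$-homology, with the connecting map computed via $(\tau-1)\sum_{k=0}^{2^{m-1}-1}\tau^{k}=s-1$ applied to the lift $v_0=\bigl(\sum_{k}\tau^{k}\bigr)\bigl(\sum_{j}\sigma^{j}\bigr)$ of $1\in\Z$, exhibits $\Ima(\delta)$ as the all-ones line. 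That single relation is precisely what the paper encodes by its prescription $\ebar_{2^{n}r,2^{n}r}\mapsto\ebar_{2^{n}r-1,1}+\cdots+\ebar_{2^{n}r-1,2^{n}r-1}$, and the paper's typos ($fr$ for $2^{n}r$, $g$ for $r$) are resolved correctly in your reading. The $\sigma$-action is the left coset shift with wrap-around through $\sigma^{2^{n}}\in I$ as you say, and the residual $\Z$-extension splits because the quotient is free. Net: same diagram, but the homological argument derives the structure rather than decreeing it, which is what the paper's proof really needs.
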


\begin{proof}
Take a generator $\tau$ of $I$, and assume $\#I=2^{m}$ where $m\in \Znn$. Then we have
\begin{gather*}
X_{*}(T_{F/F^{+},F_0}^{1})=\bigoplus_{i=1}^{r}\bigoplus_{\nu=0}^{2^{n}-1}\bigoplus_{\mu=0}^{2^{m-1}-1}\Z (\delta_{ij}\sigma^{\nu}\tau^{\mu}(1-\tau^{2^{m-1}}))_{1\leq j \leq r},\\
X_{*}(T_{F/F^{+},F_0})=X_{*}(T_{F/F^{+},F_0}^{1}) \oplus \Z\left(\sum_{i=0}^{2^{n}-1}\sum_{j=0}^{2^{m-1}-1}\sigma^{i}\tau^{j}\right)_{1\leq i\leq r}
\end{gather*}
(recall that $\sigma \in G=\Gal(F_1/F_0)$ is a lift of the Frobenius). Now we define a homomorphism
\begin{equation*}
c_{n,r}^{1}\colon X_{*}(T_{F/F^{+},F_0}^{1})\rightarrow (\Z/2)^{\oplus fr}
\end{equation*}
by sending $(\delta_{ij}\sigma^{\nu}\tau^{\mu}(1-\tau^{2^{m-1}}))_{1\leq j\leq r}$ to $e_{2^{n}r,(\nu-1)r+i}$. Moreover, define a homomorphism
\begin{equation*}
c_{n,r}\colon X_{*}(T_{F/F^{+},F_0})\rightarrow \Z \oplus (\Z/2)^{\oplus fr-1}
\end{equation*}
which sends $\sum_{i=0}^{2^{n}-1}\sum_{j=0}^{2^{m-1}-1}\sigma^{i}\tau^{j}$ to $(1,0)$ and as the following diagram becomes commutative: 
\begin{equation*}
\xymatrix{
X_{*}(T_{F/F^{+},F_0}^{1}) \ar[r] \ar[d]_{c_{n,r}^{1}}& X_{*}(T_{F/F^{+},F_0})\ar[d]^{c_{n,r}} \\
(\Z/2)^{\oplus 2^{n}r}\ar[r]^{f_{n,r}\hspace{7mm}}& \Z \oplus (\Z/2)^{\oplus 2^{n}r-1}. }
\end{equation*}
Then $c_{n,r}^{1}$ and $c_{n,r}$ induce the desired commutative diagram. 
\end{proof}

\subsection{A variant of the $R$-equivalence problem}\label{vrrp}

Set $K_{T}^{\circ}:=\Ker(\kappa_{T})$, which is a compact open subgroup of $T(\Qp)$ by \cite[Note (1)]{Rapoport2005}. We consider the following question: 
\begin{itemize}
\item[($R_{T}^{\circ}$)] Does we have $RT(\Qp)\cdot K_{T}^{\circ}=T(\Qp)$?
\end{itemize}

Since $K_{T}^{\circ}\subset K_{T}$, ($R_{T}$) is affirmative if so is for ($R_{T}^{\circ}$). 

\begin{lem}\label{rckt}
\emph{The following are equivalent: 
\begin{enumerate}
\item ($R_{T}^{\circ})$ is affirmative,
\item $\kappa_{T}(RT(\Qp))=X_{*}(T)_{I_{F_0}}^{\sigma}$, 
\item $X_{*}(P)_{I_{F_0}}^{\sigma}\rightarrow X_{*}(T)_{I_{F_0}}^{\sigma}$ is surjective for any (or some) coflasque resolution of $T$:
\begin{equation*}
1\rightarrow F\rightarrow P\rightarrow T\rightarrow 1. 
\end{equation*}
\end{enumerate}}
\end{lem}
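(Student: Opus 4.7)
The plan is to prove the equivalences by exploiting the two defining properties of the Kottwitz map: surjectivity and functoriality. Throughout, I write $F_0$ in place of the $\Qp$ appearing in the statement (the section takes place over a general non-archimedean local field).

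First I would establish (i) $\Leftrightarrow$ (ii). By the construction of the Kottwitz map in Section \ref{kwmp} (passing from $\Fb_0$-points to $\sigma$-fixed coinvariants), the map
\begin{equation*}
\kappa_T\colon T(F_0)\rightarrow X_{*}(T)_{I_{F_0}}^{\sigma}
\end{equation*}
is surjective with kernel $K_T^{\circ}$ by definition. Hence a subgroup $S \subset T(F_0)$ satisfies $S\cdot K_T^{\circ}=T(F_0)$ if and only if $\kappa_T(S)=X_{*}(T)_{I_{F_0}}^{\sigma}$. Applying this to $S=RT(F_0)$ yields the equivalence.

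Next I would establish (ii) $\Leftrightarrow$ (iii). Fix any coflasque resolution $1\to F\to P\to T\to 1$. By definition, $RT(F_0)$ is the image of $P(F_0)\to T(F_0)$, and this image is independent of the chosen resolution by \cite[\S 5, Th{\'e}or{\`e}me 2]{ColliotThelene1977} as recalled after Definition in Section \ref{cofl}. The functoriality of the Kottwitz map gives a commutative square
\begin{equation*}
\xymatrix{
P(F_0)\ar[r]\ar[d]_{\kappa_P}& T(F_0)\ar[d]^{\kappa_T}\\
X_{*}(P)_{I_{F_0}}^{\sigma}\ar[r] & X_{*}(T)_{I_{F_0}}^{\sigma}.
}
\end{equation*}
Because $P$ is induced over $F_0$, Case 1 of the construction in Section \ref{kwmp} shows $\kappa_P$ is surjective. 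Therefore
\begin{equation*}
\kappa_T(RT(F_0))=\kappa_T(\Ima(P(F_0)\to T(F_0)))=\Ima(X_{*}(P)_{I_{F_0}}^{\sigma}\to X_{*}(T)_{I_{F_0}}^{\sigma}).
\end{equation*}
Combining with the surjectivity of $\kappa_T$, condition (ii) holds if and only if the bottom horizontal arrow is surjective, i.e., (iii) holds. The independence of $RT(F_0)$ from the resolution shows that (iii) for some resolution is equivalent to (iii) for any resolution, completing the equivalence.

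There is no serious obstacle here: the argument is a direct diagram chase once the surjectivity of $\kappa_P$ for induced $P$ and the identification $K_T^{\circ}=\Ker(\kappa_T)$ are in hand, both of which are built into the construction recalled in Section \ref{kwmp}. The only mild subtlety is to note that the independence of $RT(F_0)$ from the coflasque resolution is what allows passage between ``any'' and ``some'' in (iii).
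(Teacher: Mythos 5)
Your proof is correct and follows essentially the same approach as the paper: both use that $\kappa_T$ is surjective with kernel $K_T^\circ$ to get (i)$\Leftrightarrow$(ii), then the functoriality of the Kottwitz maps together with the surjectivity of $\kappa_P$ for induced $P$ to relate (ii) and (iii). The only cosmetic difference is that the paper runs the implications (ii)$\Rightarrow$(iii)$\Rightarrow$(i) as a chain, while you collapse (ii)$\Leftrightarrow$(iii) into a single diagram-chase equality $\kappa_T(RT(F_0))=\Ima(X_*(P)_{I_{F_0}}^\sigma\to X_*(T)_{I_{F_0}}^\sigma)$; the content is the same, and your remark on resolution-independence for the ``any/some'' clause is a correct and welcome clarification.
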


\begin{proof}
(i)$\Leftrightarrow$(ii): This follows by the definition of $K_{T}^{\circ}$. 

(ii)$\Rightarrow$(iii): It is a consequence of the definition of $RT(\Qp)$ and the functoriality of the Kottwitz maps. 

(iii)$\Rightarrow$(i): Let $t\in T(F_0)$, and take $a\in X_{*}(P)_{I_{F_0}}^{\sigma}$ which maps to $\kappa_{T}(t)$. Note that it is possible by hypothesis. Then the surjectivity of $\kappa_{P}$ implies that there is $t'\in P(F_0)$ such that $\kappa_{P}(t')=a$. We denote by $t_0\in T(F_0)$ the image of $t'$, which is contained in $RT(F_0)$. Then $t_0^{-1}t$ is contained in $K_{T}^{\circ}$. In summary, we have $t=t_0(t_0^{-1}t)\in RT(F_0)\cdot K_{T}^{\circ}$ as desired. 
\end{proof}

\begin{prop}\label{trca}
\emph{The question ($R_{T}^{\circ}$) is affirmative if $T$ splits over a totally ramified Galois extension of $F_0$. }
\end{prop}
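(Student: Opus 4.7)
The plan is to reduce ($R_T^\circ$) to a surjectivity statement for cocharacter lattices via Lemma~\ref{rckt}(iii), and then verify that statement for the typical coflasque resolution arising from a totally ramified splitting field.

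First I would fix a finite Galois totally ramified extension $F/F_0$ splitting $T$, whose existence is granted by hypothesis, and form the associated coflasque resolution
\begin{equation*}
1 \to F(T) \to P(T) \to T \to 1
\end{equation*}
constructed as in Section~\ref{cofl}. By construction $P(T)$ also splits over $F$, so $X_*(T)$ and $X_*(P(T))$ are $\Gamma_{F_0}$-lattices on which the action factors through $\Gal(F/F_0)$. The crucial observation is that total ramification of $F/F_0$ forces the natural map $I_{F_0} \to \Gal(F/F_0)$ to be surjective; hence the $\Gamma_{F_0}$-action on both lattices already factors through the image of $I_{F_0}$, and therefore the action of any lift $\sigma$ of Frobenius becomes trivial after passing to $I_{F_0}$-coinvariants. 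Consequently
\begin{equation*}
X_*(T)_{I_{F_0}}^\sigma = X_*(T)_{I_{F_0}}, \qquad X_*(P(T))_{I_{F_0}}^\sigma = X_*(P(T))_{I_{F_0}}.
\end{equation*}

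Finally, the coflasque resolution remains exact after applying $X_*(\cdot)$, so $X_*(P(T)) \to X_*(T)$ is surjective, and right-exactness of the $I_{F_0}$-coinvariant functor preserves this surjectivity. Combining these facts with the identifications above verifies the hypothesis of Lemma~\ref{rckt}(iii), yielding ($R_T^\circ$). The argument is largely bookkeeping once Lemma~\ref{rckt} is available; the only point that deserves care, and so is the main (mild) obstacle, is the passage from total ramification of $F/F_0$ to the surjectivity of $I_{F_0} \to \Gal(F/F_0)$, which is what ultimately forces the $\sigma$-action on the $I_{F_0}$-coinvariants to be trivial and thereby reduces the question to plain right-exactness of coinvariants.
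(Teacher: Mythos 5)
Your proposal is correct and follows essentially the same route as the paper's proof: take a coflasque resolution whose terms split over the totally ramified Galois extension, observe that $\sigma$ acts trivially on the $I_{F_0}$-coinvariants, use surjectivity of $X_{*}(P)_{I_{F_0}}\to X_{*}(T)_{I_{F_0}}$, and invoke Lemma~\ref{rckt}(iii). The only difference is that you spell out the intermediate step (surjectivity of $I_{F_0}\to\Gal(F/F_0)$ implying trivial $\sigma$-action on coinvariants), which the paper leaves implicit.
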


\begin{proof}
Fix a splitting field $\widetilde{F}$ of $T$ which is totally ramified Galois over $F_0$. Take a coflasque resolution
\begin{equation*}
1\rightarrow F\rightarrow P\rightarrow T\rightarrow 1
\end{equation*}
of $T$. We may assume that both $F$ and $P$ split over $\widetilde{F}$. Then the homomorphism
\begin{equation*}
X_{*}(P)_{I_{F_0}}\rightarrow X_{*}(T)_{I_{F_0}}
\end{equation*}
is surjective. Since $\sigma$ acts trivially on the both-hand sides, Lemma \ref{rckt} implies the desired assertion. 
\end{proof}

Now we consider ($R_{T}^{\circ}$) for $T=T_{F/F^{+},F_0}$, which will be rewritten as ($R_{F/F^{+},F_0}^{\circ}$). 

\begin{prop}\label{urcp}
\emph{If $F_i$ is not a ramified quadratic extension of $F_i^{+}$ for all $i$, then we have $K_{F/F^{+},F_0}=K_{F/F^{+},F_0}^{\circ}$. In particular, ($R_{F/F^{+},F_0}^{\circ}$) is affirmative if and only if so is for ($R_{F/F^{+},F_0}$). }
\end{prop}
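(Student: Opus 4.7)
The plan is to derive the identity $K_{F/F^{+},F_{0}}=K_{F/F^{+},F_{0}}^{\circ}$ as a direct consequence of the torsion-freeness of $X_{*}(T_{F/F^{+},F_{0}})_{I_{F_{0}}}$ established under the present hypothesis in Proposition \ref{cceq}, and then to obtain the ``in particular'' clause immediately from the very definitions of ($R_{F/F^{+},F_{0}}$) and ($R_{F/F^{+},F_{0}}^{\circ}$).

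First, I would invoke Proposition \ref{cceq} to see that $X_{*}(T_{F/F^{+},F_{0}})_{I_{F_{0}}}$ is torsion-free, hence so is its subgroup $X_{*}(T_{F/F^{+},F_{0}})_{I_{F_{0}}}^{\sigma}$, which is the target of the Kottwitz map $\kappa := \kappa_{T_{F/F^{+},F_{0}}}$. Second, I would use the fact recalled at the beginning of Section \ref{vrrp} (citing \cite[Note (1)]{Rapoport2005}) that $K_{F/F^{+},F_{0}}^{\circ}=\ker(\kappa)$ is a compact open subgroup of $T_{F/F^{+},F_{0}}(F_{0})$; being compact, it is contained in the maximum compact subgroup $K_{F/F^{+},F_{0}}$, and being open in $K_{F/F^{+},F_{0}}$ with $K_{F/F^{+},F_{0}}$ compact, the quotient $K_{F/F^{+},F_{0}}/K_{F/F^{+},F_{0}}^{\circ}$ is both compact and discrete, hence finite. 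Third, since $\kappa$ is surjective with kernel $K_{F/F^{+},F_{0}}^{\circ}$, it descends to an injection
\begin{equation*}
K_{F/F^{+},F_{0}}/K_{F/F^{+},F_{0}}^{\circ}\hookrightarrow X_{*}(T_{F/F^{+},F_{0}})_{I_{F_{0}}}^{\sigma};
\end{equation*}
as the right-hand side is torsion-free while the left-hand side is finite, the latter must be trivial, yielding the desired equality.

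Once $K_{F/F^{+},F_{0}}=K_{F/F^{+},F_{0}}^{\circ}$ is in hand, the two conditions $RT_{F/F^{+},F_{0}}(F_{0})\cdot K_{F/F^{+},F_{0}}=T_{F/F^{+},F_{0}}(F_{0})$ and $RT_{F/F^{+},F_{0}}(F_{0})\cdot K_{F/F^{+},F_{0}}^{\circ}=T_{F/F^{+},F_{0}}(F_{0})$ coincide, giving the ``in particular'' clause. I do not anticipate a substantial obstacle: the heavy lifting has been carried out in Proposition \ref{cceq}, and the rest is a standard repackaging of the surjectivity of $\kappa$. The only mildly delicate point is confirming that $K_{F/F^{+},F_{0}}^{\circ}$ is compact (and not merely open), so that it sits inside the maximum compact subgroup with finite quotient; this is precisely where the compact-openness statement from \cite[Note (1)]{Rapoport2005} enters.
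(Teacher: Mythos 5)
Your proof is correct and follows exactly the route the paper intends: the paper's proof simply says ``This follows from Proposition~\ref{cceq},'' and your argument spells out precisely why — the torsion-free target of $\kappa_{T_{F/F^{+},F_0}}$ forces the finite quotient $K_{F/F^{+},F_0}/K_{F/F^{+},F_0}^{\circ}$ to vanish, and the ``in particular'' clause is then immediate. Nothing is missing, and no genuinely different idea is employed.
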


\begin{proof}
This follows from Proposition \ref{cceq}. 
\end{proof}

\begin{thm}\label{rcrm}
\emph{Assume that $F/F^{+}$ satisfies the same hypothesis as Proposition \ref{orbp}. Write $[F_1:F_1^{\ur}]=2^m$ and $[F_1^{\ur}:F_0]=2^{n}$, where $m,n\in \Znn$. 
If ($R_{F/F^{+},F_0}$) is affirmative, then so is for ($R_{F/F^{+},F_0}^{\circ}$) if and only if $\Gal(F/F_0)$ is cyclic or $m\leq n$. Otherwise, $\kappa_{T_{F/F^{+},F_0}}(RT_{F/F^{+},F_0}(F_0))_{\tor}$ is equal to the image of the map $X_{*}(T_{F/F^{+},F_0}^{1})_{I_{F_0}}^{\sigma}\rightarrow X_{*}(T_{F/F^{+},F_0})_{I_{F_0}}^{\sigma}$, and we have
\begin{align*}
X_{*}(T_{F/F^{+},F_0})_{I_{F_0}}^{\sigma}/\kappa_{T_{F/F^{+},F_0}}(RT_{F/F^{+},F_0}(F_0))&\cong X_{*}(T_{F/F^{+},F_0})_{I_{F_0},\tor}^{\sigma}/\kappa_{T_{F/F^{+},F_0}}(RT_{F/F^{+},F_0}(F_0))_{\tor}\\
&\cong \Z/2. 
\end{align*}}
\end{thm}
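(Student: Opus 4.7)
The plan is to apply Lemma \ref{rckt}, which reduces $(R_{F/F^{+},F_0}^{\circ})$ to the surjectivity of $X_{*}(P(T))_{I_{F_0}}^{\sigma}\to X_{*}(T)_{I_{F_0}}^{\sigma}$ for $T:=T_{F/F^{+},F_0}$. Since $K_{T}$ is compact and the non-torsion quotient $X_{*}(T)_{I_{F_0}}^{\sigma}/X_{*}(T)_{I_{F_0},\tor}^{\sigma}\cong \Z$ is detected by $\val_{F_0}\circ \nu_{F/F^{+},F_0}$, one has $\kappa_{T}(K_{T})\subseteq X_{*}(T)_{I_{F_0},\tor}^{\sigma}$; the hypothesis that $(R_{F/F^{+},F_0})$ is affirmative then forces $\kappa_{T}(RT(F_0))$ to surject onto $\Z$, and the snake lemma yields the first isomorphism in the statement. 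What remains is to determine $\kappa_{T}(RT(F_0))\cap X_{*}(T)_{I_{F_0},\tor}^{\sigma}$.

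The cyclic case is immediate: if $G:=\Gal(F_1/F_0)$ is cyclic, then $T$ splits over a cyclic extension, so Proposition \ref{ctsa}(ii) gives $RT(F_0)=T(F_0)$ and $(R^{\circ})$ is affirmative. Henceforth assume $G$ is non-cyclic. By Proposition \ref{n1rt}(iii) (applicable since both $F_1/F_0$ and $F_1^{+}/F_0$ are Galois), $T^{1}(F_0)\subseteq RT(F_0)$, so by functoriality of the Kottwitz map
\begin{equation*}
\Ima(\alpha)\subseteq \kappa_{T}(RT(F_0))_{\tor},\quad \alpha\colon X_{*}(T^{1})_{I_{F_0}}^{\sigma}\to X_{*}(T)_{I_{F_0}}^{\sigma}.
\end{equation*}
For the reverse inclusion, Proposition \ref{n1rt}(ii) decomposes $P(T)=P(T^{1})\oplus Q$ with $Q$ an induced complement; combining the explicit description $Q=\bigoplus_{F'}\Res_{F'/F_0}(T^{\spl}_{F'}/(T^{1})^{\spl}_{F'})$ with the direct summand $\G_m\hookrightarrow T$ of Proposition \ref{bmrt} and the identification of $X_{*}(T)_{I_{F_0}}$ in Proposition \ref{orbp} shows that $\kappa_{T}(Q(F_0))$ lies entirely in the non-torsion $\Z$-summand, yielding $\kappa_{T}(RT(F_0))_{\tor}=\Ima(\alpha)$ as in the statement.

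For the final comparison with $X_{*}(T)_{I_{F_0},\tor}^{\sigma}$, I would use Proposition \ref{orbp}. The $\sigma$-invariants of $(\Z/2)^{\oplus 2^{n}r}$ are spanned over $\F_2$ by the orbit sums $t_{i}:=\sum_{k=0}^{2^{n}-1}\ebar_{2^{n}r,i+kr}$ for $i=1,\dots,r$, so $\Ima(\alpha)$ equals the $\F_2$-span of $f_{n,r}(t_{1}),\dots,f_{n,r}(t_{r})$. Comparing this with $[(\Z/2)^{\oplus 2^{n}r-1}]^{\sigma}$ case-by-case across the non-cyclic cases of Propositions \ref{abna}(ii) and \ref{abc2}(ii)--(iv)—restricted to the configurations for which $(R_{F/F^{+},F_0})$ is affirmative, by Theorems \ref{rpng} and \ref{r2ng}—shows that when $m\le n$ the $t_{i}$ together with the $\Z$-factor saturate $X_{*}(T)_{I_{F_0}}^{\sigma}$, while when $m>n$ exactly one $\F_2$-direction arising from the exceptional assignment in $f_{n,r}$ is not hit, producing the quotient $\Z/2$. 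The main technical obstacle is this combinatorial verification, specifically tracking the interplay between the shift-by-$r$ $\sigma$-action and the exceptional entry of $f_{n,r}$ uniformly across all sub-cases.
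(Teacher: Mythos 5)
Your outline follows the paper through the first two steps: reducing via Lemma \ref{rckt}, handling the cyclic case by Proposition \ref{ctsa}(ii) (though the $G$ that splits into "cyclic or $m\le n$" really comes from Proposition \ref{abna} combined with Theorem \ref{rpng}), and getting $\Ima(\alpha)\subseteq\kappa_{T}(RT(F_0))_{\tor}$ from Proposition \ref{n1rt}(iii). But the middle of your argument has a genuine gap, and the gap propagates to the final case analysis.

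The claim that $\kappa_{T}(Q(F_0))$ lies entirely in the non-torsion $\Z$-summand is false, and you can already see that from the theorem you are trying to prove: when $\Gal(F/F_0)$ is non-cyclic and $m\le n$, the theorem asserts $(R^{\circ}_{F/F^{+},F_0})$ is affirmative, i.e.\ $\kappa_{T}(RT(F_0))=X_{*}(T)^{\sigma}_{I_{F_0}}$, so $\kappa_{T}(RT(F_0))_{\tor}$ must equal the full group $X_{*}(T)^{\sigma}_{I_{F_0},\tor}$, which is strictly bigger than $\Ima(\alpha)$. Indeed, in the identification of Proposition \ref{orbp}, $X_{*}(T^{1})^{\sigma}_{I_{F_0}}$ is the $\F_2$-span of the $r$ orbit sums $t_1,\dots,t_r$, but $\sum_i t_i$ is the diagonal vector, which is precisely $\ker(f_{n,r})$; hence $\Ima(\alpha)$ has $\F_2$-dimension $r-1$, while $X_{*}(T)^{\sigma}_{I_{F_0},\tor}$ has dimension $r$. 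This index-$2$ discrepancy between $\Ima(\alpha)$ and the full torsion is purely a fact about the $\sigma$-action of Proposition \ref{orbp} and is completely independent of $m$. Consequently, your final "case-by-case" claim — that $\Ima(\alpha)$ plus the $\Z$-factor saturates when $m\le n$ and misses a $\Z/2$ when $m>n$ — cannot be salvaged: $\alpha$ sees the same picture in both cases. The $m$ versus $n$ dichotomy is not visible in the combinatorics of $f_{n,r}$ and the $t_i$ alone; it has to come from somewhere else.

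Where it actually comes from, in the paper, is an analysis of the direct summands $\Res_{F_1^{H}/F_0}(T^{\spl}_{F_1^{H}})\to T$ of $P(T)\to T$ indexed by \emph{all} subgroups $H\le\Gal(F_1/F_0)$, not just those giving $P(T^{1})$ or $\G_m$. One such summand, attached to $H=\langle\sigma\rangle$, hits $(1,0)$ and gives the first isomorphism in the statement, just as you observe. The decisive one is $H=\langle\sigma\tau\rangle$: when $m\le n$ this summand maps the generator to $(1,w)$ with $w$ a \emph{torsion element not in} $\Ima(\alpha)$, filling the missing $\Z/2$ and proving surjectivity; when $m>n$ one shows, by running through all $H$, that every summand either contains $\tau^{2^{m-1}}$ (forcing its contribution into the $\Z$-part via the similitude) or is of the form $\langle\sigma^{2^{k}}\tau^{N}\rangle$ with $2^{m-n+k}\mid N$, and in the latter case the torsion contribution lands inside $\Ima(\alpha)$. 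That subgroup enumeration — what $T^{\spl}_{F_1^{H}}$ looks like and where its image sits — is the content you would need, and it is exactly what is absent from your proposal.
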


\begin{proof}
By Proposition \ref{abna}, Lemma \ref{rdfl} (ii) and Theorem \ref{rpng}, we have one of the following: 
\begin{itemize}
\item[(a)] $\Gal(F_1/F_0)$ is cyclic,
\item[(b)] $\Gal(F_1/F_0)$ satisfies the condition (i) in Proposition \ref{abna} for $m,n>0$. 
\end{itemize}
If (a) is satisfied, then ($R_{F/F^{+},F_0}^{\circ}$) is affirmative by Proposition \ref{crtw} (ii). Now suppose that (b) is satisfied. In the sequel of the proof, we use the notations and the identification 
\begin{equation*}
X_{*}(T_{F/F^{+},F_0})_{I_{F_0}}\cong \Z\oplus (\Z/2)^{\oplus 2^{n}r-1}
\end{equation*}
induced by $c_{n,r}$, as in the proof of Proposition \ref{orbp}. Then $\sigma$ fixes $(1,0)$. We denote by $\tau$ and $\sigma$ the elements of $G$ corresponding to $(1,0)$ and $(0,1)$ respectively. 

First, the homomorphism
\begin{equation*}
\Res_{F_1^{\sigma}/F_0}\G_m \rightarrow T_{F/F^{+},F_0};x\mapsto \left(\prod_{i=0}^{2^{m-1}-1}\tau^i(x),\ldots,\prod_{i=0}^{2^{m-1}-1}\tau^i(x)\right)
\end{equation*}
is a direct summand of $P(T_{F/F^{+},F_0})\rightarrow T_{F/F^{+},F_0}$, which induces the following diagram:
\begin{equation*}
\xymatrix{
X_{*}(\Res_{F_1^{\sigma}/F_0}\G_m)_{I_{F_0}} \ar[r] \ar[d]^{\cong}& X_{*}(T_{F/F^{+},F_0})_{I_{F_0}}\ar[d]^{c_{n,r}} \\
\Z \ar[r] & \Z \oplus (\Z/2)^{\oplus 2^{n}r-1}, }
\end{equation*}
Here the lower horizontal map is given by $1\mapsto (1,0)$. Hence the image of $X_{*}(P(T_{F/F^{+},F_0}))_{I_{F_0}}^{\sigma}\rightarrow X_{*}(T_{F/F^{+},F_0})_{I_{F_0}}^{\sigma}$ contains $(1,0)$ since $F_1^{\sigma}/F_0$ is totally ramified. This implies the isomorphism
\begin{equation*}
X_{*}(T_{F/F^{+},F_0})_{I_{F_0}}^{\sigma}/\kappa_{T_{F/F^{+},F_0}}(RT_{F/F^{+},F_0}(F_0))\cong X_{*}(T_{F/F^{+},F_0})_{I_{F_0},\tor}^{\sigma}/\kappa_{T_{F/F^{+},F_0}}(RT_{F/F^{+},F_0}(F_0))_{\tor}. 
\end{equation*}

Second, Proposition \ref{n1rt} (ii), (iii) imply that the image $M$ of
\begin{equation*}
X_{*}(T_{F/F^{+},F_0}^{1})_{I_{F_0}}^{\sigma}\rightarrow X_{*}(T_{F/F^{+},F_0})_{I_{F_0}}^{\sigma}
\end{equation*}
is contained in that of $X_{*}(P(T_{F/F^{+},F_0}))_{I_{F_0}}^{\sigma}\rightarrow X_{*}(T_{F/F^{+},F_0})_{I_{F_0}}^{\sigma}$, that is, $\kappa_{T_{F/F^{+},F_0}}(RT_{F/F^{+},F_0}(F_0))$. Note that Proposition \ref{orbp} implies the equality
\begin{equation*}
M=\{(0,(a_1,\ldots,a_{2^{n}r-1}))\in X_{*}(T_{F/F^{+},F_0}^{1})_{I_{F_0}}^{\sigma}\mid a_{r}=0\}. 
\end{equation*}

\textbf{Case 1.~$m\leq n$. }
The homomorphism
\begin{equation*}
\Res_{F_1^{\sigma\tau}/F_0}\G_m \rightarrow T_{F/F^{+},F_0};x\mapsto \left(\prod_{i=0}^{2^{m-1}-1}\tau^i(x),\ldots,\prod_{i=0}^{2^{m-1}-1}\tau^i(x)\right)
\end{equation*}
is a direct summand of $P(T_{F/F^{+},F_0})\rightarrow T_{F/F^{+},F_0}$. Moreover, it induces the commutative diagram
\begin{equation*}
\xymatrix{
X_{*}(\Res_{F_1^{\sigma\tau}/F_0}\G_m)_{I_{F_0}} \ar[r] \ar[d]^{\cong}& X_{*}(T_{F/F^{+},F_0})_{I_{F_0}}\ar[d]^{c_{n,r}} \\
\Z \ar[r] & \Z \oplus (\Z/2)^{\oplus 2^{n}r-1}, }
\end{equation*}
where the lower horizontal map is given by
\begin{equation*}
1\mapsto \left(1,\sum_{j=0}^{2^{n-1}-1}(\ebar_{2^{n}r-1,2jr+1}+\cdots+\ebar_{2^{n}r-1,2j(r+1)})\right). 
\end{equation*}
This induces $X_{*}(P(T_{F/F^{+},F_0}))_{I_{F_0}}^{\sigma}\rightarrow X_{*}(T_{F/F^{+},F_0})_{I_{F_0}}^{\sigma}$ is surjective since $F_1^{\sigma\tau}/F_0$ is totally ramified. Combining this result with Lemma \ref{rckt}, we obtain that ($R_{F/F^{+},F_0}^{\circ}$) is affirmative. 

\textbf{Case 2.~$m>n$. }
Let $H$ be a subgroup of $\Gal(F_1/F_0)$. If $H$ contains $\tau^{2^{m-1}}$, then a homomorphism $\Res_{F_1^{H}/F_0}\G_m\rightarrow T_{F/F^{+},F_0}$ which is a direct summand of $P(T_{F/F^{+},F_0})\rightarrow T_{F/F^{+},F_0}$ is of the form
\begin{equation*}
\Res_{F_1^{H}/F_0}\G_m \rightarrow T_{F/F^{+},F_0};t\mapsto \N_{F_1^{H}/F_0}(t)^{s},
\end{equation*}
where $s\in \{0,\pm 1\}$. Hence the induced map $X_{*}(\Res_{F_1^{H}/F_0}\G_m)_{I_{F_0}}^{\sigma}\rightarrow X_{*}(T_{F/F^{+},F_0})_{I_{F_0}}^{\sigma}$ is given by $1\mapsto (2s',0)$, where $s\mid s'$. Otherwise, suppose that the image of the second projection
\begin{equation*}
\Gal(F_1/F_0)\cong \Z/2^{m}\times \Z/2^{n}\rightarrow \Z/2^{n}. 
\end{equation*}
is equal to $\langle 2^{k}\rangle$, where $0\leq k \leq n$. Then $H$ is trivial if $k=n$, and otherwise $H$ is of the form $\langle \sigma^{2^{k}}\tau^{N}\rangle$ where $2^{m-n+k}\mid N$. Hence any direct summand $\Res_{F_1^{H}/F_0}\G_m\rightarrow T_{F/F^{+},F_0}$ of $P(T_{F/F^{+},F_0})\rightarrow T_{F/F^{+},F_0}$ induces the homomorphism $\Z\rightarrow X_{*}(T_{F/F^{+},F_0})_{I_{F_0}}^{\sigma}$ whose image is contained in the subgroup of $X_{*}(T_{F/F^{+},F_0})_{I_{F_0}}^{\sigma}$ generated by $(2^{k},0)$ and $M$. Therefore we obtain the latter isomorphism. 
\end{proof}

\section{The question of Bruhat--Colliot-Th{\'e}l{\`e}ne--Sansuc--Tits}\label{warf}

In this section, for a number field $L'$, we write $\In(L')$ and $\Ram(L')$ for the sets of prime numbers that inert and ramify in $L'$ respectively. Moreover, if $L'/\Q$ is abelian, then for a prime number $\ell$, we denote by $D_{\ell}(L'/\Q)$ and $I_{\ell}(L'/\Q)$ the decomposition and the inertia groups of $L'/\Q$ at $\ell$ respectively. 

\subsection{General theory}

Let $T$ be a torus over $\Q$. Here we consider the following: 
\begin{itemize}
\item[(A$_{T,p}$)\hspace{-4mm}] \hspace{4mm}Does we have $T(\Q)\cdot K_{T_{\Qp}}=T_{L/L^{+}}(\Qp)$?
\item[(A$_{T,p}^{\circ}$)\hspace{-4mm}] \hspace{4mm}Does we have $T(\Q)\cdot K_{T_{\Qp}}^{\circ}=T_{L/L^{+}}(\Qp)$?
\end{itemize}
By definition, (A$_{T,p}$) is affirmative if so is for (A$_{T,p}^{\circ}$). 

The following important result will be used later. 

\begin{prop}\label{r2wa}
\emph{
\begin{enumerate}
\item The question (A$_{T,p}$) is affirmative if so is for ($R_{T_{\Qp}}$). 
\item The image of the composite
\begin{equation*}
T(\Q)\hookrightarrow T(\Qp)\xrightarrow{\kappa_{T_{\Qp}}}X_{*}(T)_{I_{\Qp}}^{\sigma}
\end{equation*}
contains $\kappa_{T_{\Qp}}(RT(\Qp))$. In particular, (A$_{T,p}^{\circ}$) is affirmative if so is for ($R_{T_{\Qp}}^{\circ}$). 
\end{enumerate}}
\end{prop}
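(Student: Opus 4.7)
The plan is to exploit a coflasque resolution \(1 \to F \to P \xrightarrow{\pi} T \to 1\) of \(T\) \emph{chosen over \(\Q\)} (for instance, the canonical resolution \(P(T) = \prod_{k_{0} \subset k' \subset k} \Res_{k'/\Q} T_{k'}^{\spl}\) recalled in Section \ref{cofl}, applied to any global splitting field \(k\) of \(T\)). After base change to \(\Qp\), the group \(RT(\Qp)\) is the image of \(\pi \colon P(\Qp) \to T(\Qp)\); here one uses that \(RT(\Qp)\) is independent of the choice of coflasque resolution (recalled after the definition of \(RT\)) in order to compute it from the \(\Q\)-resolution base-changed to \(\Qp\). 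The key extra ingredient, unavailable when one works purely \(p\)-adically, is weak approximation for \(P\) at \(p\): since \(P\) is a finite product of Weil restrictions \(\Res_{k_i/\Q}\G_m\) of number fields \(k_i\), weak approximation in each \(k_i\) at the finitely many places above \(p\) yields that \(P(\Q)\) is dense in \(P(\Qp)\).

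For (i), assume (\(R_{T_{\Qp}}\)) and let \(t \in T(\Qp)\). First write \(t = \pi(p') \cdot k\) with \(p' \in P(\Qp)\) and \(k \in K_{T_{\Qp}}\). The preimage \(\pi^{-1}(K_{T_{\Qp}}) \subset P(\Qp)\) is open by continuity of \(\pi\) and openness of \(K_{T_{\Qp}}\), so by density I find \(p'' \in P(\Q)\) lying in the open neighborhood \(p' \cdot \pi^{-1}(K_{T_{\Qp}})\) of \(p'\). Then \(\pi(p'') \in T(\Q)\) and \(\pi(p'')^{-1} \pi(p') = \pi(p''^{-1} p') \in K_{T_{\Qp}}\), whence \(t = \pi(p'') \cdot \bigl(\pi(p'')^{-1} \pi(p')\bigr) \cdot k \in T(\Q) \cdot K_{T_{\Qp}}\).

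For (ii), let \(t' \in RT(\Qp)\) and write \(t' = \pi(p')\). The subgroup \(K_{P_{\Qp}}^{\circ} = \Ker(\kappa_{P_{\Qp}})\) is open in \(P(\Qp)\) because \(X_{*}(P)_{I_{\Qp}}^{\sigma}\) is discrete, so weak approximation produces \(p'' \in P(\Q)\) with \(p''^{-1} p' \in K_{P_{\Qp}}^{\circ}\), equivalently \(\kappa_{P_{\Qp}}(p'') = \kappa_{P_{\Qp}}(p')\). Functoriality of the Kottwitz map supplies the commutative square
\begin{equation*}
\xymatrix{
P(\Qp) \ar[r]^{\kappa_{P_{\Qp}}} \ar[d]_{\pi} & X_{*}(P)_{I_{\Qp}}^{\sigma} \ar[d] \\
T(\Qp) \ar[r]^{\kappa_{T_{\Qp}}} & X_{*}(T)_{I_{\Qp}}^{\sigma},
}
\end{equation*}
so \(\kappa_{T_{\Qp}}(t') = \kappa_{T_{\Qp}}(\pi(p''))\), and the right-hand side lies in the image of \(T(\Q) \to X_{*}(T)_{I_{\Qp}}^{\sigma}\) since \(\pi(p'') \in T(\Q)\). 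This proves the required containment. For the ``in particular'', if (\(R_{T_{\Qp}}^{\circ}\)) holds then Lemma \ref{rckt} yields \(\kappa_{T_{\Qp}}(RT(\Qp)) = X_{*}(T)_{I_{\Qp}}^{\sigma}\); combined with what was just shown, every \(t \in T(\Qp)\) admits \(t_{0} \in T(\Q)\) with \(\kappa_{T_{\Qp}}(t) = \kappa_{T_{\Qp}}(t_{0})\), which forces \(t_{0}^{-1} t \in K_{T_{\Qp}}^{\circ}\) and hence \(t \in T(\Q) \cdot K_{T_{\Qp}}^{\circ}\).

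No serious obstacle arises; the one point requiring attention is that the coflasque resolution must genuinely be defined over \(\Q\) (rather than only over \(\Qp\)) so that weak approximation on \(P\) is available, and this is precisely arranged by feeding a number-field splitting field of \(T\) into the canonical construction of Section \ref{cofl}. All other ingredients — openness of \(K_{T_{\Qp}}\) and of \(\Ker(\kappa_{P_{\Qp}})\), continuity of \(\pi\), and functoriality of the Kottwitz maps — are standard facts already recorded in Section \ref{kwmp}.
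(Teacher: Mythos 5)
Your proof is correct and follows essentially the same route as the paper: the paper's proof of (i) is simply a citation of \cite[Proposition~2.2]{ColliotThelene2007}, whose argument is precisely the one you reconstruct (take a coflasque resolution over $\Q$, use weak approximation at $p$ on the quasi-trivial torus $P$ against the open subgroup $K_{T_{\Qp}}$), and the paper's proof of (ii) is declared to be ``the same'' with a pointer to \cite[p.279]{ColliotThelene2007}, matching your replacement of $\pi^{-1}(K_{T_{\Qp}})$ by the open subgroup $\Ker(\kappa_{P_{\Qp}})$ together with functoriality of the Kottwitz maps and Lemma~\ref{rckt}.
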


\begin{proof}
The assertion (i) is \cite[Proposition 2.2]{ColliotThelene2007}. The proof of (ii) is the same as (i). See \cite[p.279]{ColliotThelene2007}. 
\end{proof}

Let $L$ be a CM field, and denote by $L^{+}$ the maximally totally real subfield of $L$. From now on, we simply write $T_{L/L^{+}}$ for the $\Q$-torus $T_{L/L^{+},\Q}$ defined in Definition \ref{tkkd}. For a prime number $\ell$, we set $K_{L/L^{+},\ell}:=K_{T_{L/L^{+}},\ell}$. On the other hand, we define $K_{L/L^{+},p}^{\circ}:=K_{T_{L/L^{+}},p}^{\circ}$. Then rewrite (A$_{L/L^{+}}$) and (A$_{L/L^{+}}^{\circ}$) for (A$_{T_{L/L^{+}},p}$) and (A$_{T_{L/L^{+}},p}^{\circ}$) respectively. 

\begin{lem}\label{rdtp}
\emph{Let $L_1/\Q$ be a subextension of $L/\Q$ such that $[L:L_1]$ is an odd number. Assume that $L_1^{+}:=L_1\cap L^{+}$ satisfies $[L_1:L_1^{+}]=2$. 
\begin{enumerate}
\item The question (A$_{L/L^{+},p}$) is affirmative if and only if so is for (A$_{L_1/L_1^{+},p}$). 
\item Assume that $L_{\Qp}/L_{\Qp}^{+}$ satisfies (r) (see Defintion \ref{cdar}), and that the places of $L$ lying above $v_1$ is unique for any place $v_1\mid p$ of $L_1$. Then (A$_{L/L^{+},p}^{\circ}$) is affirmative if and only if so is for (A$_{L_1/L_1^{+},p}^{\circ}$). 
\end{enumerate}}
\end{lem}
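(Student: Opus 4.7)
The plan is to reduce from $L$ down to $L_1$ via the inclusion $i\colon T_{L_1/L_1^+}\hookrightarrow T_{L/L^+}$ and the norm $N:=N_{L/L_1}\colon T_{L/L^+}\rightarrow T_{L_1/L_1^+}$. These morphisms of $\Q$-tori are well-defined because the hypothesis $L_1^+=L_1\cap L^+$ with $[L_1:L_1^+]=2$ forces $L=L_1L^+$, so that restriction gives an isomorphism $\Gal(L/L^+)\xrightarrow{\cong}\Gal(L_1/L_1^+)$ compatible with the involutions defining the two norms. Their composition $N\circ i$ is multiplication by $d:=[L:L_1]$, an odd integer. Both $i$ and $N$ send $\Q$-points to $\Q$-points and preserve maximal compact subgroups (and, by functoriality of $\kappa$, also the kernels $K^\circ$), so they induce homomorphisms between the obstruction quotients
\[
Q:=T_{L/L^+}(\Qp)/\bigl(T_{L/L^+}(\Q)\cdot K_{L/L^+,p}\bigr),\quad Q_1:=T_{L_1/L_1^+}(\Qp)/\bigl(T_{L_1/L_1^+}(\Q)\cdot K_{L_1/L_1^+,p}\bigr),
\]
with $\bar N\circ\bar i=[d]$ on $Q_1$; analogous quotients $Q^\circ,Q_1^\circ$ are defined using $K^\circ$. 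Each of (A$_{L/L^+,p}$) and (A$_{L_1/L_1^+,p}$) (resp.\ their $\circ$-versions) is equivalent to the triviality of the corresponding quotient.

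For part~(i), I would first observe that $Q$ and $Q_1$ are $2$-primary. This follows from the fact that, modulo induced summands, the cocharacter lattices of $T_{L/L^+}$ and $T_{L_1/L_1^+}$ are governed by the quadratic sub-extensions $L/L^+$ and $L_1/L_1^+$, so the obstruction to weak approximation (an $H^1$-type quantity) is annihilated by $2$. Since $d$ is odd, $[d]$ is then an automorphism on $Q$ and $Q_1$, and the identity $\bar N\circ\bar i=[d]$ forces $\bar i$ to be injective and $\bar N$ to be surjective. To promote $\bar i$ to a bijection, I would use that after inverting $d$ the lattice $X_*(T_{L/L^+})$ splits as $i_*X_*(T_{L_1/L_1^+})\oplus \ker(N_*)$ via the averaging projector $\tfrac1d\sum_{\gamma\in\Gal(L/L_1)}\gamma$; this provides a right inverse to $\bar N$ on the $2$-primary quotient. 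Thus $Q\cong Q_1$, yielding the equivalence of (A$_{L/L^+,p}$) and (A$_{L_1/L_1^+,p}$).

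For part~(ii), the argument runs in parallel with $Q,Q_1$ replaced by $Q^\circ,Q_1^\circ$. By Proposition~\ref{r2wa}(ii) the relevant obstruction sits inside $X_*(T)_{I_{\Qp}}^\sigma$ modulo the image of $T(\Q)$. The additional hypotheses---that $L_{\Qp}/L_{\Qp}^+$ satisfies (r) and that every place of $L_1$ above $p$ extends uniquely to $L$---are precisely those of Lemma~\ref{ccr2}: applying (i) at the ramified factors and (ii) at the unramified ones, one obtains an isomorphism $i_*\colon X_*(T_{L_1/L_1^+,\Qp})_{I_{\Qp}}^\sigma\xrightarrow{\cong}X_*(T_{L/L^+,\Qp})_{I_{\Qp}}^\sigma$ at the target level of the Kottwitz map. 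Combined with the averaging argument from part~(i), this transfers (A$_{L/L^+,p}^\circ$) to (A$_{L_1/L_1^+,p}^\circ$).

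The main difficulty I anticipate is rigorously pinning down the $2$-primary structure of $Q$ (and its refinement $Q^\circ$) in a form that interacts cleanly with the averaging projector, and checking that the splitting of $X_*(T_{L/L^+})\otimes\Z[1/d]$ is compatible with taking $I_{\Qp}$-coinvariants and $\sigma$-invariants. For (ii), the delicate point is verifying that the unique-place hypothesis correctly routes each local factor into the appropriate case of Lemma~\ref{ccr2}, so that the functoriality of $\kappa$ produces exactly the isomorphism needed to lift the argument from $K$ to $K^\circ$.
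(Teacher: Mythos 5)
Your strategy for part~(i) diverges genuinely from the paper's, and as written it has a gap in the direction that actually matters for the reduction. You set up $i\colon T_{L_1/L_1^+}\hookrightarrow T_{L/L^+}$ and $N:=N_{L/L_1}$, observe $\bar N\circ\bar\imath=[d]$ on the obstruction quotient $Q_1$, and since $d$ is odd and $Q_1$ is $2$-primary you correctly deduce that $\bar\imath$ is split injective. But that only gives $Q_1\hookrightarrow Q$ as a direct summand, hence $Q=0\Rightarrow Q_1=0$, i.e.\ (A$_{L/L^+,p}$)$\Rightarrow$(A$_{L_1/L_1^+,p}$). For the converse — the direction one actually uses, reducing $L$ down to $L_1$ — you would need the complementary summand $Q/\bar\imath(Q_1)$ to vanish, and the averaging projector $\tfrac1d\,i_*N_*$ does not give this: it splits $X_*(T_{L/L^+})\otimes\Z[1/d]$, not the quotient $T(\Qp)/T(\Q)K$, and there is no a priori reason the kernel-of-$N_*$ piece contributes nothing to $Q$. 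Your concluding sentence ``Thus $Q\cong Q_1$'' is therefore unjustified, and this is precisely the ``main difficulty'' you yourself flagged at the end.

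The paper's proof of (i) is considerably more elementary and sidesteps the issue: it works only with the valuation invariant $\ord_p\circ\nu$. It records that both $\ord_p\nu_{L/L^+}(T_{L/L^+}(\Q))$ and $\ord_p\nu_{L_1/L_1^+}(T_{L_1/L_1^+}(\Q))$ contain $2\Z$ (by Proposition~\ref{bmrt}, i.e.\ $\Q^\times\subset T(\Q)$), and then uses only the square
$\ord_p\nu_{L_1/L_1^+}\circ N_{L/L_1}=d\cdot\ord_p\nu_{L/L^+}$
together with $\nu_{L/L^+}\circ i=\nu_{L_1/L_1^+}$ to show the two images in $\Z$ coincide (since $d$ is odd and both contain $2\Z$). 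This avoids any claim about an isomorphism of obstruction groups. For part~(ii) you correctly identify Lemma~\ref{ccr2} as the key input — the paper uses it to get $i_*\colon X_*(T_{L_1/L_1^+})_{I_{\Qp}}^\sigma\xrightarrow{\cong}X_*(T_{L/L^+})_{I_{\Qp}}^\sigma$ — but you embed it into the same $Q\cong Q_1$ framework and inherit the same gap. The paper instead finishes (ii) by the functoriality of the Kottwitz map together with Lemma~\ref{ts2t} ($X_*(T^1)_{I_{\Qp}}$ is $2$-torsion, so the $[d]=\bar N\circ\bar\imath$ relation on the $\sigma$-invariant coinvariants can be used on the image of $T(\Q)$ in both directions), which again stays at the level of explicit targets of $\kappa$ rather than an isomorphism of obstruction quotients.
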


\begin{proof}
(i): By Proposition \ref{bmrt}, both $\Ima(\ord_{p}\circ \nu_{L/L^{+},\Q})$ and $\Ima(\ord_{p}\circ \nu_{L_1/L_1^{+},\Q})$ contains $2\Z$. Hence the assertion is a consequence of the following commutative diagram: 
\begin{equation*}
\xymatrix@C=64pt{
T_{L/L^{+}}(\Q)\ar[d]^{\N_{L/L_1}}\ar[r]^{\hspace{7mm}\ord_{p}\circ \nu_{L/L^{+},\Q}}& \Z \ar[d]^{[L:L_1]\times}\\
T_{L_1/L_1^{+}}(\Q)\ar[r]^{\hspace{7mm}\ord_{p}\circ \nu_{L_1/L_1^{+},\Q}}& \Z. }
\end{equation*}

(ii): By Lemma \ref{ccr2}, the canonical injection $T_{L_1/L_1^{+}}\hookrightarrow T_{L/L^{+}}$ induces an isomorphism
\begin{equation*}
X_{*}(T_{L_1/L_1^{+}})_{I_{\Qp}}^{\sigma}\cong X_{*}(T_{L/L^{+}})_{I_{\Qp}}^{\sigma}. 
\end{equation*}
Hence the assertion follows from Lemma \ref{ts2t} and the functorality of the Kottwitz maps. 
\end{proof}

\begin{lem}\label{rmqd}
\emph{Assume $p>2$ and $L$ contains a quadratic field that is ramified at $p$. Then (A$_{L/L^{+},p}$) is affirmative. }
\end{lem}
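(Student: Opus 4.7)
The plan is to apply Proposition~\ref{r2wa}(i), reducing (A$_{L/L^{+},p}$) to ($R_{T_{L/L^{+},\Qp}}$). The hypothesis supplies a quadratic subfield $K=\Q(\sqrt d)\subset L$ with $d$ squarefree and $p\mid d$ (possible since $p>2$), so $K_{\mathfrak p}=\Qp(\sqrt d)$ is a tamely totally ramified quadratic extension of $\Qp$ lying inside every completion $L_{\mathfrak P}$ of $L$ at $p$.

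My approach is to decompose $T_{L/L^{+},\Qp}$ via Proposition~\ref{trid}(iii) as the fibre product over $\G_m$ of the local tori $T_{L_{\mathfrak P}/L^{+}_{\mathfrak q},\Qp}$ indexed by primes $\mathfrak q\mid p$ of $L^{+}$, then to attack each factor. At a split $\mathfrak q$ the factor is induced by Proposition~\ref{trid}(ii), so ($R_{L_{\mathfrak P}/L^{+}_{\mathfrak q},\Qp}$) holds by Corollary~\ref{spaf}. At a non-split $\mathfrak q$, $L_{\mathfrak P}$ is a field containing the totally ramified subextension $K_{\mathfrak p}/\Qp$, so I would apply Proposition~\ref{sjcc} with $F'=K_{\mathfrak p}$ and exhibit a cocharacter $c\in X_{*}(T_{L_{\mathfrak P}/L^{+}_{\mathfrak q},\Qp})^{\Gamma_{K_{\mathfrak p}}}$ with $\nu_{*}(c)=1$, produced from the morphism $\Res_{K_{\mathfrak p}/\Qp}\G_m\to T_{L_{\mathfrak P}/L^{+}_{\mathfrak q},\Qp}$ coming from $K_{\mathfrak p}\hookrightarrow L_{\mathfrak P}$ (valid when the nontrivial element of $\Gal(L_{\mathfrak P}/L^{+}_{\mathfrak q})$ restricts nontrivially to $K_{\mathfrak p}$).

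To glue the local positivities into ($R_{T_{L/L^{+},\Qp}}$) for the full fibre product, I would invoke the Cartesian descriptions from Proposition~\ref{n1rt}(i) and Lemma~\ref{rdfl}(i), together with the global element $\sqrt d\in T_{L/L^{+}}(\Q)$: since $N_{L/L^{+}}(\sqrt d)$ equals $-d$ if $K$ is imaginary quadratic or $d$ if $K\subset L^{+}$, in either case $v_p(\nu_{L/L^{+}}(\sqrt d))=1$, which supplies the common $\nu$-value needed to match the local contributions. The main obstacle I anticipate is the subcase $K_{\mathfrak p}\subset L^{+}_{\mathfrak q}$ (which can occur when $K\subset L^{+}$); there the inclusion $K_{\mathfrak p}\hookrightarrow L_{\mathfrak P}$ does not induce a morphism into $T_{L_{\mathfrak P}/L^{+}_{\mathfrak q}}$ because conjugation fixes $K_{\mathfrak p}$, and the required $\Gamma_{K_{\mathfrak p}}$-invariant cocharacter of $\nu$-weight one must instead be constructed from another direct summand of the coflasque resolution of $T_{L_{\mathfrak P}/L^{+}_{\mathfrak q},\Qp}$ exploiting the even ramification of $L^{+}_{\mathfrak q}/\Qp$ together with $\sqrt d$.
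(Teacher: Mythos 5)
Your plan runs through Proposition~\ref{r2wa}(i), i.e.\ you try to deduce (A$_{L/L^{+},p}$) from the purely local statement ($R_{T_{L/L^{+},\Qp}}$).  This is the wrong reduction for this lemma, and the obstacle you flag at the end is in fact fatal rather than a loose end.  The hypothesis ``$L$ contains a quadratic field ramified at $p$'' is a \emph{global} condition, and in the situation where the paper actually invokes this lemma (case~(b) in the proof of Theorem~\ref{mtaa}, which is used precisely when case~(a), i.e.\ the local $R$-question, fails) one has $\Qp(\sqrt d)\subset L^{+}_{\mathfrak q}$ for \emph{every} $\mathfrak q\mid p$: indeed there $L/L^{+}$ is unramified at $p$, so the ramified field $\Qp(\sqrt d)$ cannot inject into $L_{\mathfrak P}$ except through $L^{+}_{\mathfrak q}$.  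In that subcase $L_{\Qp}/L^{+}_{\Qp}$ can land squarely in a negative case of Theorem~\ref{rpng}, so there is \emph{no} $\Gamma_{F'}$-invariant cocharacter of $\nu$-weight one to be extracted from any coflasque resolution — the local assertion ($R_{T_{L_{\mathfrak P}/L^{+}_{\mathfrak q},\Qp}}$) is simply false.  No amount of ``exploiting even ramification together with $\sqrt d$'' can repair this, because $\sqrt d\in T_{L/L^{+}}(\Q)$ is a genuinely global datum and ($R_{T}$) sees only $\Qp$-points and $RT(\Qp)$; using a rational element to establish a local $R$-equivalence statement conflates the two questions.  You do correctly compute $\ord_p(\N_{L/L^{+}}(\sqrt d))=1$, which is the right key observation, but you bury it inside an argument that cannot close.

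The paper's proof goes directly at (A$_{L/L^{+},p}$) globally: since $p>2$ and $d$ is squarefree with $p\mid d$, the element $a=\sqrt d\in L$ satisfies $\N_{L/L^{+}}(a)=\pm d\in p\cdot\Z_{(p)}^{\times}$, so $a\in T_{L/L^{+}}(\Q)$ with $\ord_p(\nu_{L/L^{+},\Q}(a))=1$.  Together with $\Q^{\times}\subset T_{L/L^{+}}(\Q)$ (which already yields every even value of $\ord_p\circ\nu$) this makes $\ord_p\circ\nu$ surjective from $T_{L/L^{+}}(\Q)$.  Matching the $\nu$-coordinate is all that is needed, because the remaining ``norm-one'' part is automatic: for $t\in T_{L/L^{+}}(\Qp)$ with $\nu(t)\in\Zpt$ one may divide by an element of $K_{L/L^{+},p}$ to reduce to $T^{1}_{L/L^{+}}(\Qp)$, and density of $T^{1}_{L/L^{+}}(\Q)$ in $T^{1}_{L/L^{+}}(\Qp)$ (Hilbert~90 plus weak approximation for $\Res_{L/\Q}\G_m$) then finishes.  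This one global witness replaces the entire local case analysis you propose, and works even in the sub-case where the local $R$-question breaks.
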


\begin{proof}
Since $p>2$, there is $a\in L'$ so that $\N_{L/L^{+}}(a)\in p\cdot \Z_{(p)}^{\times}$. This implies $a\in T_{L/L^{+}}(\Q)$ and the positivity of (A$_{L/L^{+},p}$). 
\end{proof}

In the sequel, we denote by $S(L/L^{+})$ the set of prime numbers $\ell$ such that $\nu_{L_{\Ql}/L_{\Ql}^{+},\Ql}(K_{L/L^{+},\ell})$ is \emph{strictly} contained in $\Z_{\ell}^{\times}$. 

\begin{lem}\label{sndi}
\emph{Assume that $L/\Q$ is an abelian extension. Then the set $S(L/L^{+})$ is the set of prime numbers $\ell'$ such that $L/L^{+}$ is ramified at all $v\mid p$ and $D_{\ell}(L/\Q)=I_{\ell}(L/\Q)$. }
\end{lem}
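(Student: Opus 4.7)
The plan is to analyze $\nu_{L_{\Ql}/L^+_{\Ql},\Ql}(K_{L/L^+,\ell})$ place by place via local class field theory. Since $L/\Q$ is abelian, all completions $L_v$ for $v\mid\ell$ are mutually isomorphic as local fields, as are the $L^+_{v^+}$ for $v^+\mid\ell$, and the splitting behaviour of any $v^+\mid\ell$ in $L/L^+$ is the same for all such $v^+$. Unpacking the definition $T_{L/L^+}(\Ql)=\{t\in(L\otimes\Ql)^\times:\N_{L/L^+}(t)\in\Ql^\times\}$, one checks that $\nu(K_{L/L^+,\ell})$ coincides with the set of $u\in\Zl^\times$ lying in $\N_{L_v/L^+_{v^+}}(O^\times_{L_v})$ at some (equivalently, any) place $v^+\mid\ell$.

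I would first dispose of the non-ramified situations. If $v^+$ splits in $L$, then $L\otimes_{L^+}L^+_{v^+}=L^+_{v^+}\times L^+_{v^+}$ and the norm is the product map $(a,b)\mapsto ab$, which is surjective on units. If $L_v/L^+_{v^+}$ is unramified quadratic, the norm is likewise surjective on units by the standard fact about unramified extensions. In either case $\nu(K_{L/L^+,\ell})=\Zl^\times$, so $\ell\notin S(L/L^+)$; this yields the necessity of the condition that $L/L^+$ is ramified at every $v\mid\ell$.

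Now assume $L/L^+$ is ramified at all $v\mid\ell$. Then $\N_{L_v/L^+_{v^+}}(\pi_{L_v})$ is a uniformizer of $L^+_{v^+}$, which implies $\N(O^\times_{L_v})=\N(L_v^\times)\cap O^\times_{L^+_{v^+}}$; hence, by local class field theory, $u\in\Zl^\times$ lies in $\nu(K_{L/L^+,\ell})$ if and only if $\rho_{L_v/L^+_{v^+}}(u)=1$ in the order-two group $\Gal(L_v/L^+_{v^+})$. Applying the norm-compatibility of Artin reciprocity to the tower $\Ql\subset L^+_{v^+}\subset L_v$, and using $\N_{L^+_{v^+}/\Ql}(u)=u^{d}$ for $u\in\Ql^\times$ with $d=[L^+_{v^+}:\Ql]$, I obtain $\rho_{L_v/L^+_{v^+}}(u)=\rho_{L_v/\Ql}(u)^{d}$ inside the abelian group $\Gal(L_v/\Ql)$. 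Since $\rho_{L_v/\Ql}(\Zl^\times)=I_\ell(L/\Q)$, the image of $\Zl^\times$ in $\Gal(L_v/L^+_{v^+})$ is exactly $I_\ell(L/\Q)^{d}$.

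Therefore $\ell\in S(L/L^+)$ is equivalent to $I_\ell(L/\Q)^{d}$ being nontrivial in $\Gal(L_v/L^+_{v^+})\cong\Z/2$. To finish, I would translate this into the condition $D_\ell(L/\Q)=I_\ell(L/\Q)$ (equivalently, the residue degree $f$ equals one), using the identity $d=|D_\ell(L/\Q)|/2$ and the short exact sequence $1\to\Gal(L_v/L^+_{v^+})\to I_\ell(L/\Q)\to I_\ell(L^+/\Q)\to 1$ coming from the ramification, together with the structural description of abelian inertia groups over $\Ql$ furnished by local class field theory. The main obstacle will be this last group-theoretic step: verifying that the generator of $\Gal(L_v/L^+_{v^+})$ is a $d$-th power in the abelian group $I_\ell(L/\Q)$ precisely when $f=1$, which requires carefully exploiting both the abelianness of $\Gal(L_v/\Ql)$ and the inclusion $\Gal(L_v/L^+_{v^+})\subset I_\ell(L/\Q)$ forced by the ramification hypothesis.
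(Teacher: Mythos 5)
Your reduction to a single place $v\mid\ell$, the disposal of the split and unramified cases, the reformulation via the local Artin map, and the transfer identity computing the Artin symbol in $\Gal(L_v/L^+_{v^+})$ of $u\in\Zl^\times$ as the $d$-th power ($d=[L^+_{v^+}:\Ql]$) of its Artin symbol in $\Gal(L_v/\Ql)$ are all correct; in the ramified case you arrive at the right criterion, namely that $\ell\in S(L/L^+)$ if and only if the $d$-th power map carries $I_\ell(L/\Q)$ onto $\Gal(L_v/L^+_{v^+})\cong\Z/2$. This is a more explicit route than the paper's, which instead identifies $\Ima(\nu_{L_{\Ql}/L^+_{\Ql},\Ql})$ with the norm group of the unique quadratic subextension of $L_v/\Ql$ via Proposition \ref{niab} and then asks whether that quadratic is ramified.

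The step you flag as the main obstacle, however, cannot be carried out as you hope, because the equivalence ``$I_\ell(L/\Q)^{d}\neq 1$ iff $D_\ell(L/\Q)=I_\ell(L/\Q)$'' is false in general. Write $e=|I_\ell(L/\Q)|$ and $f=|D_\ell(L/\Q)|/e$, so $d=ef/2$ and $e$ is even; for $I_\ell(L/\Q)$ cyclic one computes $|I_\ell(L/\Q)^{d}|=e/\gcd(e,d)$, which equals $2$ precisely when $f$ is odd and equals $1$ when $f$ is even (and it is $1$ whenever $\ell=2$ and the inertia is noncyclic). So the honest conclusion is ``$I_\ell$ cyclic and $f_\ell$ odd'', not ``$f_\ell=1$''. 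A concrete counterexample to the lemma as written: let $L_1\subset\Q(\zeta_{37})$ be the quartic subfield, $L_2\subset\Q(\zeta_7)$ the real cubic subfield, and $L=L_1L_2$; then $L$ is abelian CM of degree $12$, $D_{37}(L/\Q)\cong\Z/12$, $I_{37}(L/\Q)\cong\Z/4$ contains the complex conjugation, $d=6$, and $I_{37}(L/\Q)^{6}$ has order $2$, hence $37\in S(L/L^{+})$ even though $f_{37}=3$. The paper's own proof has the same gap, since it invokes Proposition \ref{niab}, which is proved only under the standing hypothesis of Section \ref{abex} that $[F:F_0]$ is a power of $2$ (its part (ii) does fail without it). This is harmless downstream, because Lemma \ref{sndi} is applied only after $[L:\Q]$ has been replaced by a $2$-power, where ``$f_\ell$ odd'' and ``$f_\ell=1$'' coincide; but to complete your argument you must either weaken the conclusion to ``$I_\ell$ cyclic and $f_\ell$ odd'' or impose that $[L:\Q]$ is a power of $2$.
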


\begin{proof}
If $D_{\ell}(L/\Q)$ is not cyclic, then Proposition \ref{niab} implies the surjectivity of $\nu_{L_{\Ql}/L_{\Ql}^{+},\Ql}$. Otherwise, we have $\Ima(\nu_{L_{\Ql}/L_{\Ql}^{+},\Ql})=\N_{F/\Ql}(F^{\times})$ where $F/\Ql$ is the unique quadratic extension with Galois group $D_{\ell}(L/\Q)/2$. Then we have $\nu_{L_{\Ql}/L_{\Ql}^{+},\Ql}(K_{L/L^{+},\ell})\subsetneq \Z_{\ell}^{\times}$ if and only if $F/\Ql$ is ramified. Hence the assertion follows. 
\end{proof}

\subsection{Proofs of the affirmative theorems}

Here we give a proof of the main theorems on the positivities of (A$_{L/L^{+},p}$) and of (A$_{L/L^{+},p}^{\circ}$). 

\begin{thm}\label{mtaa}
\emph{Suppose that $L$ is an abelian extension of $\Q$ and $L/L^{+}$ is unramified at all $v\mid p$. Then both (A$_{L/L^{+},p}$) and (A$_{L/L^{+},p}^{\circ}$) are affirmative if one of the following hold: 
\begin{enumerate}
\item $L/L^{+}$ is split at all $v\mid p$,
\item the ramification index of $L^{+}/\Q$ at $p$ is an odd number,
\item $p>2$ and $[L:\Q]\not\in 32\Z$,
\item $p=2$ and $[L:\Q]\not\in 8\Z$. 
\end{enumerate}}
\end{thm}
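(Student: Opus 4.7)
The plan is to combine local $R$-equivalence with a global class-field-theoretic argument. Since $L/L^+$ is unramified at all $v\mid p$, Proposition \ref{urcp} gives $K_{L/L^+,p}=K_{L/L^+,p}^\circ$, making (A$_{L/L^+,p}$) and (A$_{L/L^+,p}^\circ$) equivalent; I focus on (A). Let $L_1\subseteq L$ be the fixed field of the maximal odd-order subgroup of $\Gal(L/\Q)$; then $[L:L_1]$ is odd and $[L_1:\Q]$ is the $2$-part of $[L:\Q]$. A direct check shows $L_1^+:=L_1\cap L^+$ satisfies $[L_1:L_1^+]=2$ and that the unramifiedness hypothesis together with each of (i)--(iv) descends to $L_1$; Lemma \ref{rdtp}(i) then allows us to assume $[L:\Q]=2^m$.

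Since $L/\Q$ is Galois abelian, the completion $F:=L_v$ and $F^+:=L^+_{v^+}$ are independent of $v\mid p$. Lemma \ref{rdfl}(ii) identifies ($R_{L_{\Qp}/L_{\Qp}^+,\Qp}$) with ($R_{F/F^+,\Qp}$), and Proposition \ref{r2wa}(i) reduces (A) to the latter. Case (i) gives $F=F^+\times F^+$, so Corollary \ref{spaf} applies. Case (ii) combined with the $2$-power reduction forces $F^+/\Qp$ to be unramified, so Corollary \ref{m3af}(i) applies. Case (iv), and case (iii) with $[F:\Qp]\le 4$, are handled by Corollary \ref{m3af}(ii).

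The only situation not settled locally is (iii) with $[F:\Qp]\in\{8,16\}$. By Theorem \ref{rpng}, the only possible obstruction (given $F/F^+$ unramified) is its case (i): $\Gal(F/\Qp)\cong\Z/2^m\times\Z/2^n$ with $0<m<n$ and $m+n\le 4$, for which $e(F^+/\Qp)=2^m$ is even. Here I intend to prove (A$_{L/L^+,p}$) directly using the exact sequence $1\to T^1_{L/L^+}\to T_{L/L^+}\xrightarrow{\nu}\G_m\to 1$. For the norm-one part, Hilbert 90 realises $T^1_{L/L^+}(R)=\{y/\bar y:y\in(L\otimes_\Q R)^\times\}$ for any $\Q$-algebra $R$, and weak approximation for $L^\times$ at the places of $L$ above $p$ yields $T^1_{L/L^+}(\Q)\cdot K^1_p=T^1_{L/L^+}(\Qp)$, where $K^1_p$ denotes the maximal compact of $T^1_{L/L^+}(\Qp)$. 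For the $\nu$-part, $\nu(K_{L/L^+,p})=\Zp^\times$ and, because $e^+$ is even and $L/L^+$ is inert at $v^+\mid p$ in the remaining cases, $\nu(T_{L/L^+}(\Qp))=\Qp^\times$; it therefore suffices to produce $\alpha\in\N_{L/L^+}(L^\times)\cap\Q^\times$ with $\ord_p(\alpha)=1$.

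Take $\alpha=p\ell$ for a prime $\ell$ to be chosen. By Hasse's norm theorem for the cyclic extension $L/L^+$, the condition $\alpha\in\N_{L/L^+}(L^\times)$ is a local-norm condition at every place of $L^+$; demanding $\ell$ split completely in $L$ trivialises the conditions at $v^+\mid p\ell$ and at every place unramified in $L/L^+$, leaving a finite system of $\Z/2$-conditions at the places of $\Ram(L/L^+)$ not above $p$. The main obstacle is the ensuing Chebotarev step: via local class field theory the residual conditions translate into Dirichlet-type congruence conditions on $\ell$, which combined with ``$\ell$ splits completely in $L$'' yield a single Frobenius condition on $\ell$ in a finite abelian extension of $\Q$. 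Applying the global reciprocity law to the quadratic character $\chi_{L/L^+}$ evaluated at $p$ shows this condition is satisfiable---the forced sum of local invariants vanishes precisely because, with $e^+$ even, the local invariants of $p$ at $v^+\mid p$ and at unramified places all vanish---and Chebotarev density then supplies infinitely many such $\ell$. Combining the norm-one and $\nu$-parts yields (A$_{L/L^+,p}$) in the remaining cases.
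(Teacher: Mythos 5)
Your preliminary reductions track the paper almost exactly: you reduce to $[L:\Q]$ a power of $2$ via Lemma \ref{rdtp}(i), pass to the completion and invoke Proposition \ref{r2wa}(i) together with Corollary \ref{spaf} and Corollary \ref{m3af} to dispose of cases (i), (ii), (iv), and the low-degree part of (iii), and correctly identify via Theorem \ref{rpng}(i) that the leftover local obstruction is $\Gal(F/\Qp)\cong\Z/2^m\times\Z/2^n$ with $0<m<n$ and $m+n\le 4$. The $T^1$/Hilbert~90/weak-approximation digression is correct but not needed: since $L/L^+$ is inert (not split) at the relevant places, $T^1_{L/L^+,\Qp}$ is anisotropic, so $K_{L/L^+,p}=\nu^{-1}(\Zp^\times)$ and it already suffices to make $\ord_p\circ\nu$ surjective on $T_{L/L^+}(\Q)$, which is exactly the production of $\alpha\in\N_{L/L^+}(L^\times)\cap\Q^\times$ with $\ord_p(\alpha)=1$.

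The genuine gap is the final Chebotarev step. You impose ``$\ell$ splits completely in $L$'' and then the residual local-norm conditions $\psi_{\ell'}(\ell)=\psi_{\ell'}(p)$ at $\ell'\in S(L/L^+)$, and you assert satisfiability ``by global reciprocity evaluated at $p$.'' But reciprocity for $p$ only gives the \emph{necessary} product identity $\prod_{\ell'}\psi_{\ell'}(p)=1$; it does not show that the combined Frobenius condition is nonempty. The difficulty is that the primes $\ell'\in S(L/L^+)$ all divide the conductor of $L$, so ``$\ell$ splits completely in $L$'' already constrains $\ell\bmod\ell'$; if for some $\ell'$ the quadratic character $\psi_{\ell'}$ (the Legendre-type character of conductor $\ell'$) happens to factor through $\Gal(L/\Q)$ --- equivalently, $\Q(\sqrt{\ell'^*})\subseteq L$ --- then ``$\ell$ split in $L$'' forces $\psi_{\ell'}(\ell)=1$, and your system is solvable only if, in addition, $\psi_{\ell'}(p)=1$. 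There is no reason for this, and indeed in the $\Gal(L/\Q)\cong\Z/4\times\Z/4$ configuration one can have $\Q(\sqrt{\ell'^*})\subseteq L$ with $D_p(L/\Q)$ one of the two index-$2$ subgroups \emph{different} from $\Gal(L/\Q(\sqrt{\ell'^*}))$, which makes $p$ inert in $\Q(\sqrt{\ell'^*})$, i.e.\ $\psi_{\ell'}(p)=-1$, and your Frobenius target is empty. The paper is structured precisely to dodge this: in the case $L$ has a quadratic subfield ramified at $p$ it bypasses Chebotarev altogether via Lemma \ref{rmqd}, and in the residual case ($G\cong\Z/4\times\Z/4$) it imposes only the far weaker condition $\ell\in\In(L')$ for a specific quadratic subfield $L'$, and proves the ``Claim'' $S(L/L^+)\cap(\In(L')\cup\Ram(L'))=\emptyset$ to guarantee that the splitting condition and the norm conditions live on coprime moduli, so that the Chinese remainder theorem and Dirichlet give existence. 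To repair your argument you would need, at minimum, an analogue of this Claim; ``$\ell$ splits completely in $L$'' is too rigid a condition to be compatible with the norm constraints in general.
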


\begin{proof}
It suffices to prove that (A$_{L/L^{+},p}$) is affirmative by Proposition \ref{urcp}. We may assume $[L:\Q]\in 2^{\Znn}$ by Lemma \ref{rdtp} (i). Then we have one of the following: 
\begin{itemize}
\item[(a)] ($R_{L_{\Qp}/L_{\Qp}^{+},\Qp}$) is affirmative,
\item[(b)] $p>2$ and $L$ contains a quadratic field which is ramified at $p$,
\item[(c)] $p>2$, $\Gal(L/\Q)\cong \Z/4\times \Z/4$ and $\nu_{L/L^{+},\Q}\colon T_{L/L^{+}}(\Qp)\rightarrow \Qpt$ is surjective. 
\end{itemize}
Note that (a) contains the cases (i), (ii) and (iv). Indeed, it follows by Corollary \ref{spaf} if $L/L^{+}$ satisfies (i), and by Corollary \ref{m3af} and Lemma \ref{rdfl} (ii) otherwise. We prove the assertion for each cases. 

(a): In this case, the assertion follows from Proposition \ref{r2wa} (i). 

(b): This is a consequence of Lemma \ref{rmqd}. 

(c): We may assume that $\Gal(L/\Q)\cong \Z/4\times \Z/4$ induces $\Gal(L/L^{+})\cong \langle (2,0)\rangle$. We denote by $L'$ the subfield of $L$ corresponding to $\langle(1,0),(0,2)\rangle$. 

\begin{claim}
The set $S(L/L^{+})\cap (\In(L')\cup \Ram(L'))$ is empty. 
\end{claim}

Indeed, if $\ell \in S(L/\Q)$, then Lemma \ref{sndi} implies $D_{\ell}(L/\Q)=I_{\ell}(L/\Q)$, and it is either $\langle (2,0)\rangle$, $\langle (1,0)\rangle$ or $\langle (1,2)\rangle$. Hence $L'/\Q$ is split at $\ell$. 

Let us consider the following set: 
\begin{equation*}
Q:=\{\ell \in \In(L'/\Q)\mid p\ell \in \N_{L_{\ell'}/L_{\ell'}^{+}}(L_{\ell'}^{\times})\text{ for all }\ell' \in S(L/L^{+})\}. 
\end{equation*}
Then $Q$ is determined as non-empty conditions on modulo the least common multiple of $8$ and $\Delta \cdot \prod_{\ell \in S(L/\Q)}\ell$, where $\Delta$ is the discriminant of $L'/\Q$. Note that we use Claim for the non-emptiness. Hence the Dirichlet's prime number theorem implies that $Q$ is an infinite set. Take $\ell \in Q$. We may assume that $\ell$ is unramified in $L/\Q$. Then the $\ell$-Frobenius on $L/\Q$ is of the form $(a,\pm 1)$ where $a\in \Z/4$ by the assumption that $\ell$ inerts in $L'$. Hence $L/L^{+}$ splits at all $v\mid \ell$ since $\Gal(L/L^{+})\cong \langle (2,0)\rangle$. This implies $p\ell \in \N_{L_{\Ql}/L_{\Ql}^{\times}}(L_{\Ql}^{\times})$. Combining this result with the surjectivity of $\nu_{L_{\Qp},L_{\Qp}^{+},\Qp}$, the first condition on $Q$ and the assumption that $L/\Q$ is abelian, we obtain $p\ell \in \N_{L_{v}/L_{v}^{+}}(L_{v}^{\times})$ for all places $v$ of $L^{+}$. Therefore $p\ell \in \N_{L/L^{+}}(L^{\times})$ by the Hasse norm principle for the cyclic extension $L/L^{+}$. 
\end{proof}

\begin{thm}\label{mtas}
\emph{Suppose that $L$ is an abelian extension of $\Q$ and $L/L^{+}$ is ramified at all $v\mid p$. Then both (A$_{L/L^{+},p}$) and (A$_{L/L^{+},p}^{\circ}$) are affirmative if $L$ satisfies at least one of the conditions (ii)--(iv) in Theorem \ref{mtaa}. }
\end{thm}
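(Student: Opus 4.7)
The plan is to combine the local methods of Section \ref{reqp} with a global argument using an auxiliary CM field $L'$, as outlined in Section \ref{mtgr}. First I would use Lemma \ref{rdtp} to reduce to the case that $[L:\Q]$ is a power of $2$: writing $\Gal(L/\Q) = P\times Q$ with $P$ the $2$-Sylow subgroup, the complex conjugation lies in $P$, so $L_1 := L^Q$ is an abelian CM subfield of $L$ with $L_1^+:=L_1\cap L^+$ satisfying $[L_1:L_1^+]=2$ and $[L:L_1]$ odd. The hypothesis that $L/\Q$ is abelian and $L/L^+$ is ramified at every $v\mid p$ forces $L\otimes_{\Q}\Qp$ over $L^+\otimes_{\Q}\Qp$ to satisfy condition (r) of Definition \ref{cdar}, and ensures that each place of $L_1$ above $p$ has a unique place of $L$ above it, so Lemma \ref{rdtp} reduces both (A$_{L/L^+,p}$) and (A$_{L/L^+,p}^\circ$) to the analogous questions for $L_1/L_1^+$. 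After this reduction, condition (ii) forces $L_1^+/\Q$ unramified at $p$, (iii) forces $[L_1:\Q]\in\{2,4,8,16\}$, and (iv) forces $[L_1:\Q]\in\{2,4\}$.

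In cases (ii), (iv), and the portion of (iii) with $[L_1:\Q]\not\in 8\Z$, I would dispose of both (A) and (A$^\circ$) via local $R$-equivalence. Corollary \ref{m3af} yields the affirmative answer to ($R_{L_1\otimes\Qp/L_1^+\otimes\Qp,\Qp}$), hence to (A$_{L_1/L_1^+,p}$) by Proposition \ref{r2wa} (i). For the $\circ$-version, Theorem \ref{rcrm} applies: in case (ii) one has $m=1$ (only the quadratic layer is ramified) and $n=[L_1^+\otimes\Qp:\Qp]\geq 1$, so $m\leq n$, and $\Gal(L_1\otimes\Qp/\Qp)$ is cyclic when $n=0$; the small-degree cases (iv) and the subcase of (iii) with $[L_1:\Q]\leq 4$ similarly satisfy the hypotheses of Theorem \ref{rcrm}, so ($R^\circ$) holds and (A$^\circ$) follows by Proposition \ref{r2wa} (ii).

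The residual hard case is $p>2$, $[L_1:\Q]\in\{8,16\}$, with both $L_1^+/\Q$ and $L_1/L_1^+$ ramified at $p$, and $G:=\Gal(L_1\otimes\Qp/\Qp)$ of the form identified in Theorem \ref{rpng} (ii). By Theorem \ref{rcrm}, the cokernel $X_*(T_{L_1/L_1^+}\otimes_\Q\Qp)^\sigma_{I_{\Qp}}/\kappa_{T_{L_1/L_1^+}\otimes\Qp}(RT_{L_1/L_1^+,\Q}\otimes\Qp(\Qp))$ is isomorphic to $\Z/2$, so by Proposition \ref{r2wa} (ii) it suffices to exhibit one global element $t_0\in T_{L_1/L_1^+}(\Q)$ whose Kottwitz image lies in the nontrivial coset. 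Following the prescription in Section \ref{mtgr}, I would construct an auxiliary CM field $L'$ for which (A$_{L'/L'^+,p}$) is already affirmative (for instance by arranging $L'/L'^+$ to be unramified at $p$, placing it in the regime of Theorem \ref{mtaa}), together with a natural morphism $T_{L'/L'^+}\to T_{L_1/L_1^+}$ (induced by an inclusion of cocharacter lattices \emph{\`a la} Proposition \ref{frrp}) transferring a suitable rational point into the nontrivial coset. The target coset can be pinpointed through the explicit cocharacter description of Proposition \ref{orbp} and the proof of Theorem \ref{rcrm}, a candidate rational prime realizing it is produced via Chebotarev, and the Hasse norm principle for the cyclic extension $L_1/L_1^+$ (as in case (c) of the proof of Theorem \ref{mtaa}) then upgrades the local data to the required $t_0$.

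The main obstacle will be this last construction: matching the choice of $L'$ to the explicit coset singled out in Theorem \ref{rcrm}. Because $\kappa$ applied to $RT_{L_1/L_1^+,\Q}\otimes\Qp(\Qp)$ already accounts for every cocharacter coming from induced summands of $P(T_{L_1/L_1^+}\otimes\Qp)$ over subfields of $L_1\otimes\Qp$, the missing coset is generated by one very specific element of $X_*(T_{L_1/L_1^+}\otimes\Qp)^\sigma_{I_{\Qp}}$; the auxiliary $L'$ must be arranged so that its $\Q$-rational norms contribute exactly this element, and a naive Chebotarev argument over arbitrary cyclic subextensions of $L_1$ (as in case (c) of the proof of Theorem \ref{mtaa}) will not suffice without this combinatorial matching.
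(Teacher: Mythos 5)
Your reduction step is already flawed, and the error propagates. You take $L_1 := L^{Q}$ with $Q$ the odd part of $\Gal(L/\Q)$, so $[L_1:\Q]$ becomes a power of $2$, and you invoke Lemma \ref{rdtp} (ii). But that lemma requires that each place of $L_1$ over $p$ have a \emph{unique} place of $L$ above it, which is equivalent to $Q\subset D_p(L/\Q)$, and this is not implied by ``$L/\Q$ abelian and $L/L^{+}$ ramified at all $v\mid p$.'' For instance, with $p=5$, take $L=\Q(\zeta_5)\cdot K$ where $K/\Q$ is a totally real cyclic cubic field in which $5$ splits completely: then $D_5(L/\Q)\cong\Z/4$, $Q\cong\Z/3$ is not contained in $D_5$, and there are three places of $L$ above the unique place of $L_1=\Q(\zeta_5)$ above $5$, so the hypothesis of Lemma \ref{rdtp} (ii) fails. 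The paper instead takes $\Gal(L/L_1)$ to be the \emph{odd part of} $D_p(L/\Q)$, which does lie inside $D_p(L/\Q)$; after this reduction only $\#D_p(L/\Q)$ and $\#I_p(L/\Q)$ are forced to be powers of $2$, not $[L_1:\Q]$. Your subsequent case split (``condition (iii) forces $[L_1:\Q]\in\{2,4,8,16\}$,'' ``(iv) forces $[L_1:\Q]\in\{2,4\}$'') should really be bounds on $\#D_p(L_1/\Q)$; conflating the two also affects how you apply Corollary \ref{m3af} (ii), which concerns $[F:F_0]=\#D_p$, not $[L_1:\Q]$.

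Your identification of the residual hard case is also off: you say it is when $G$ has the shape of Theorem \ref{rpng} (ii), but that is the case in which $(R_{F/F^+,F_0})$ itself is \emph{negative}, which is not what occurs here. Under the standing ramified hypothesis (which forces $H^+$ to be the unique order-$2$ subgroup of the cyclic inertia), that shape only arises when $\#D_p\geq 16$, and the paper's actual hard case is the one where $(R)$ is \emph{affirmative} but $(R^\circ)$ fails: $D_p(L/\Q)\cong\Z/2^m\times\Z/2^n$, $I_p(L/\Q)\cong\langle(1,0)\rangle$, $m>n>0$ (Proposition \ref{abna} (i)); here $(R)$ holds because the negative condition of Theorem \ref{rpng} (i) requires $F/F^+$ unramified, contradicting the hypothesis, and the $\Z/2$-cokernel comes from the ``Otherwise'' branch of Theorem \ref{rcrm}. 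Finally, your plan for the global input differs from, and is less developed than, the paper's: you propose an auxiliary $L'$ with $L'/L'^{+}$ unramified at $p$ and a transfer map $T_{L'/L'^+}\to T_{L_1/L_1^+}$, whereas the paper takes $L'\subset L$ to be the fixed field of $\langle(2^{m-1},1)\rangle\subset D_p(L/\Q)$ --- so $L'/L'^{+}$ is quadratic and \emph{ramified} at every $v\mid p$ with $D_p(L'/\Q)=I_p(L'/\Q)$ cyclic --- finds $a\in T_{L'/L'^+}(\Q)$ with $\ord_p(\N_{L'/L'^+}(a))=1$, regards it inside $T_{L/L^+}(\Q)$, and proves by an explicit computation of the Kottwitz map over $\Qpb$ (using the basis from Proposition \ref{orbp} and the image $M$ from Theorem \ref{rcrm}) that $\kappa(a)$ lies in the nontrivial coset. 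You correctly flagged the coset-matching as ``the main obstacle,'' but you leave it unresolved, and as written the scheme would also need $p>2$ at the final step (it is used to deduce that an element with odd $p$-valuation of the similitude is not in $M$).
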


\begin{proof}
It suffices to prove that (A$_{L/L^{+},p}^{\circ}$) is affirmative. By Lemma \ref{rdtp} (ii), we may assume that the orders of $D_{p}(L/\Q)$ and $I_{p}(L/\Q))$ are powers of $2$. Moreover, Proposition \ref{trca} and Theorem \ref{rcrm} imply that we have one of the following: 
\begin{itemize}
\item[(a)] ($R_{L_{\Qp}/L_{\Qp}^{+},\Qp}^{\circ}$) is affirmative, 
\item[(b)] $p>2$ and $D_{p}(L/\Q)\cong \Z/2^{m}\times \Z/2^{n}$ which induces $I_{p}(L/\Q)\cong \langle(1,0)\rangle$, where $m>n>0$. 
\end{itemize}
Note that (a) contains the case (iv). In the case (a), the assertion follows from Proposition \ref{r2wa} (ii). In the sequel of the proof, we assume (b). Then we have
\begin{equation*}
X_{*}(T_{L/L^{+}})_{I_{\Qp}}^{\sigma}/\kappa_{T_{L/L^{+}}}(RT_{L/L^{+}}(\Qp))\cong \Z/2. 
\end{equation*}
Let $L'$ be the subfield of $L$ corresponding to the subgroup $\langle (2^{m-1},1) \rangle$ of $D_{p}(L/\Q)\cong \Z/2^{m}\times \Z/2^{n}$, and put $L'^{+}:=L'\cap L^{+}$. Then $L'/L'^{+}$ is quadratic which is ramified at all places above $p$. Moreover, we have $D_{p}(L'/\Q)=I_{p}(L'/\Q)$ and it is cyclic. Hence there is $a\in T_{L'/L'^{+}}(\Q)$ satisfying $\ord_{p}(\N_{L'/L'^{+}}(a))=1$. 

\begin{claim}
The element $\kappa_{T_{L/L^{+},\Qp}}(a)$ is not contained in $\kappa_{T_{L/L^{+},\Qp}}(RT_{L/L^{+}}(\Qp))$. 
\end{claim}

If Claim is proved, then Lemma \ref{rdtp} (ii) implies the surjectivity of the map
\begin{equation*}
T_{L/L^{+}}(\Q)\hookrightarrow T_{L/L^{+}}(\Qp)\xrightarrow{\kappa_{T_{L/L^{+}}}}X_{*}(T_{L/L^{+}})_{I_{\Qp}}^{\sigma}, 
\end{equation*}
which means that (A$_{L/L^{+},p}^{\circ}$) is affirmative. 

For a proof of the Claim, write $L_{\Qp}=\prod_{r}F_1$ and $L_{\Qp}^{+}=\prod_{r}F_1^{+}$, where $F_1^{+}$ is a field and $F_1/F_1^{+}$ is ramified quadratic. In the sequel, we use the same notations as in the proof of Theorem \ref{rcrm} (ii). Let $\iota_{1},\ldots \iota_{r}$ be embeddings $L\hookrightarrow F_1$ which attach all places $v\mid p$ of $L$. Moreover, choose an embedding $\iota \colon F_1\hookrightarrow \Fb_1$, which commutes with the actions of $I_{F_0}$. Then there is an isomorphism
\begin{equation*}
L\otimes_{\Q}\Qpb \xrightarrow{\cong} \prod_{2^{n}r}\Fb_1;\,x\mapsto (\iota \circ \sigma^{\nu}\circ \iota_{1}(x),\ldots ,\iota \circ \sigma^{\nu}\circ \iota_{r}(x))_{0\leq \nu \leq 2^{n}-1}. 
\end{equation*}
We regard $T_{L/L^{+}}(\Qpb)$ as a subgroup of $\prod_{2^{n}r}\Fb_1^{\times}$ under the above isomorphism. Then we have
\begin{equation*}
\kappa_{T_{L/L^{+},\Qp}}(a)=\kappa_{T_{L/L^{+},\Qpb}}(\iota \circ \iota_{1}(a),\ldots,\iota \circ \iota_{r}(a),\iota \circ \tau^{2^{m-1}}\circ \iota_{1}(a),\ldots,\iota \circ \tau^{2^{m-1}}\circ \iota_{r}(a))_{2^{n-1}}. 
\end{equation*}
by $\sigma(a)=\tau^{2^{m-1}}(a)$. Since the norm map $\N_{\Fb_1/\Qpb}$ is surjective, there is $x\in \Fb_1^{\times}$ satisfying $\N_{\Fb_1/\Qpb}(x)=\N_{L/L^{+}}(a)$. On the other hand, the homomorphism $\varphi_0$ in the proof of Theorem \ref{rcrm} (ii) gives the equality
\begin{equation*}
\kappa_{T_{L/L^{+},\Qpb}}(x\tau(x)\cdots \tau^{2^{m-1}-1}(x),\ldots,x\tau(x)\cdots \tau^{2^{m-1}-1}(x))=(1,0). 
\end{equation*}
Moreover, $(u\tau^{2^{m-1}}(u)^{-1},\ldots,u\tau^{2^{m-1}}(u)^{-1})$ is contained in the kernel of $\kappa_{T_{L/L^{+},\Qpb}}$ for $u\in \Fb_1^{\times}$. Hence we obtain the equality
\begin{equation*}
\kappa_{T_{L/L^{+},\Qpb}}(\iota \circ \iota_1(a),\ldots,\iota \circ \iota_1(a))=(1,0). 
\end{equation*}
This implies that $\kappa_{T_{L/L^{+},\Qpb}}(a\cdot (\iota \circ \iota_1(a),\ldots,\iota \circ \iota_1(a))^{-1})$ is contained in $X_{*}(T_{L/L^{+}})_{I_{\Qp},\tor}^{\sigma}$. However, it is not contained in $M$ since $\ord_{p}(\N_{L/L^{+}}(a))=1$ (here we use $p>2$). 
\end{proof}

\subsection{Construction of negative examples}\label{nexc}

Here we prove the following: 

\begin{thm}\label{ngmt}
\emph{
\begin{enumerate}
\item Assume $p>2$. For $d\in 32\Z$, there is an infinite family $\{L_j\}_{j\in J}$ of CM fields of degree $d$ that are abelian over $\Q$ such that (A$_{L_j/L_j^{+},p}$) is negative for any $j\in J$. Moreover, if $p\equiv 1\bmod 4$, then both the sets
\begin{equation*}
J_{\ur}:=\{j\in J\mid L_j/L_j^{+}\text{ is unramified at all }v\mid p\}
\end{equation*}
and $J\setminus J_{\ur}$ are infinite sets. 
\item For $d\in 8\Z$, there is an infinite family $\{L_j\}_{j\in J'}$ of CM fields of degree $d$ that are abelian over $\Q$ such that (A$_{L_j/L_j^{+},2}$) is negative for any $j\in J'$. Moreover, both the sets
\begin{equation*}
J'_{\ur}:=\{j\in J'\mid L_j/L_j^{+}\text{ is unramified at all }v\mid p\}
\end{equation*}
and $J'\setminus J'_{\ur}$ are infinite. 
\end{enumerate}}
\end{thm}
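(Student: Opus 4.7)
The plan is to give a sufficient local--global criterion for the negativity of (A$_{L/L^{+},p}$) and then build an infinite family of abelian CM fields $L_{j}/\Q$ satisfying it via Kronecker--Weber and the Chebotarev density theorem.

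First I would reduce to the minimal degree. By Lemma \ref{rdtp}(i), (A$_{L/L^{+},p}$) is insensitive to replacing $L$ by an odd-degree overfield (with $L^{+}$ replaced accordingly), so it suffices to construct one abelian CM example with $[L:\Q]=32$ for $p>2$, or $[L:\Q]=8$ for $p=2$, exhibiting negativity. The cases $d\in 32\Z$ (respectively $d\in 8\Z$) then follow by taking composites with totally real abelian $\Q$-extensions of odd degree $d/32$ (respectively $d/8$), infinitely many of which occur inside $\Q(\zeta_{\ell})^{+}$ for varying auxiliary primes $\ell$; this also yields infinitely many pairwise nonisomorphic $L_{j}$ once a seed example is in hand.

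Next I would prescribe the completion $L_{\Qp}$ to realize one of the negative shapes classified in Theorem \ref{rpng} or Theorem \ref{r2ng}, forcing ($R_{L_{\Qp}/L_{\Qp}^{+},\Qp}$) to be negative. For $p>2$, take $\Gal(L_{\Qp}/\Qp)\cong\Z/2\times\Z/16$ with $H^{+}=\langle(0,8)\rangle$, and inertia either $\langle(1,0)\rangle$ (unramified $L_{\Qp}/L_{\Qp}^{+}$, feeding $J_{\ur}$) or $\langle(1,8)\rangle$ (ramified, feeding $J\setminus J_{\ur}$). The unramified subfamily requires $p\equiv 1\bmod 4$ so that the chosen inertia is compatible with the CM involution globally; for $p=2$ the analogous choices from Theorem \ref{r2ng} produce both $J'_{\ur}$ and $J'\setminus J'_{\ur}$ unconditionally. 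By local class field theory every such abelian extension of $\Qp$ exists, and by Kronecker--Weber it lifts to an abelian CM extension of $\Q$ of the desired degree with the prescribed decomposition behaviour at $p$. Chebotarev, applied to congruence conditions on the Frobenius at auxiliary primes dividing the conductor, delivers infinitely many pairwise nonisomorphic such $L$.

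The heart of the proof is to show that the local $R$-equivalence obstruction is not killed by the global $\Q$-points, which is where the sufficient condition is formulated. Pick $t\in T_{L/L^{+}}(\Qp)$ whose class in $T_{L/L^{+}}(\Qp)/(RT_{L/L^{+}}(\Qp)\cdot K_{L/L^{+},p})$ is nontrivial, as provided by Theorems \ref{rpng} and \ref{r2ng}. For any $a\in T_{L/L^{+}}(\Q)$, the Hasse norm principle for the quadratic extension $L/L^{+}$ forces $\nu_{L/L^{+}}(a)\in\Q^{\times}$ to be a local norm at every place, and in particular at $p$; combined with the explicit description of $\kappa_{T_{L/L^{+}}}(RT_{L/L^{+}}(\Qp))$ from Theorem \ref{rcrm}, a Chebotarev-controlled analysis of the set $S(L/L^{+})$ shows that the image of $T_{L/L^{+}}(\Q)$ in $T_{L/L^{+}}(\Qp)/K_{L/L^{+},p}$ avoids the class of $t$, proving negativity of (A$_{L/L^{+},p}$).

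The main obstacle will be turning the local obstruction into a genuine global one: one must control the intersection of the global norm group $\N_{L/L^{+}}(L^{\times})\cap\Q^{\times}$ with the local norm group at $p$, and ensure that no auxiliary prime $\ell\in S(L/L^{+})$ introduces a compensating global norm relation. This is exactly where the Chebotarev-prescribed congruences at the auxiliary primes must be engineered, in a sense dual to the Dirichlet/Chebotarev argument of case (c) of Theorem \ref{mtaa}: there, one produces an auxiliary prime $\ell$ that lets a global norm realize $p$ locally, whereas here one argues that for the chosen $L$ no such $\ell$ can exist, so the obstruction persists.
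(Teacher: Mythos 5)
Your high-level plan matches the paper's: reduce to the $2$-part of $[L:\Q]$ via Lemma \ref{rdtp}(i), arrange a negative local shape at $p$ from Theorems \ref{rpng}/\ref{r2ng}, realize it globally by Chebotarev, and then argue that the global $\Q$-points do not repair the local failure. However, the proposal contains a genuine gap in exactly the step you yourself flag as ``the main obstacle'', and as written it would not compile into a proof.

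The issue is this. Negativity of the local problem $(R_{L_{\Qp}/L_{\Qp}^{+},\Qp})$ is \emph{not} sufficient for negativity of $(\mathrm{A}_{L/L^{+},p})$: the proof of Theorem \ref{mtaa}, cases (b) and (c), constructs abelian CM $L$ of degree $8$ and $16$ for $p>2$ for which the local $R$-equivalence problem fails and yet $(\mathrm{A}_{L/L^{+},p})$ is affirmative, precisely because $T_{L/L^{+}}(\Q)\cap(p\cdot\Z_{(p)}^{\times})$ is nonempty via a Hasse-norm-principle argument with an auxiliary prime $\ell$. So merely prescribing the local completion to be in one of the negative shapes is compatible with $(\mathrm{A})$ being affirmative, and your proposal to show the class of $t\in T_{L/L^{+}}(\Qp)$ ``avoids'' the image of $T_{L/L^{+}}(\Q)$ needs a genuinely global input that is not present. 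The paper supplies this input in Proposition \ref{ngax}: it imposes a structured decomposition $\Gal(L/\Q)\cong\Z/2^{m}\times H$ with $\Gal(L/L^{+})$ the unique order-$2$ subgroup of the first factor, introduces an auxiliary prime $\ell_{0}\equiv 1\bmod 4$ with $D_{\ell_{0}}=I_{\ell_{0}}\cong\Z/2^{m}\times\{0\}$, and --- crucially --- demands that the subfield of $L$ fixed by $\langle 2\rangle\times H$ is $\Q(\sqrt{\ell_{0}})$. With those constraints a short quadratic-reciprocity argument shows that \emph{any} rational number $pa$ with $a\in\Z_{(p)}^{\times}$ that is a local norm at $\ell_{0}$ must have some prime factor $\ell_{i}$ inert in $L/L^{+}$ (namely one with $\left(\tfrac{\ell_{i}}{\ell_{0}}\right)=-1$), and therefore fails to be a local norm at $\ell_{i}$; by the Hasse norm principle for the cyclic extension $L/L^{+}$, $pa$ is then not a global norm. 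Your appeal to a ``Chebotarev-controlled analysis of the set $S(L/L^{+})$'' is an acknowledgment that something like this is needed, not an argument for it; without an explicit mechanism such as the $\ell_{0}$ device you would have no way to rule out the compensation that does occur in lower degree.

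Two smaller points. First, for the infinitude at a \emph{fixed} degree (e.g.\ $d=32$), your odd-degree-composite trick contributes nothing; you must vary the auxiliary primes $\ell,\ell'$ in the construction, as the paper does in forming $J_{m,0}$, $J_{m,1}$, $J_{m,2}$. Second, the specific inertia generator $\langle(1,8)\rangle$ you propose at degree $32$ does not match the ramified negative shape in Theorem \ref{rpng}(ii) (one needs $I=\langle(1,2^{n-u})\rangle$ with $0<u<\min\{m,n\}$ inside $\Z/2^{u}\times\Z/2^{m+n-u}$), so even the local data in your proposal would have to be revised. These are repairable, but the missing Proposition-\ref{ngax}-type criterion is the substantive gap.
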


We give a proof of Theorem \ref{ngmt} in the sequel. We will use a sufficient condition for the negativity of (A$_{L/L^{+},p}$) as follows: 

\begin{prop}\label{ngax}
\emph{Let $\ell_0$ be a prime number satisfying $\ell_0 \equiv 1\bmod 4$. Suppose that $L$ is an abelian extension of $\Q$ which satisfies the following: 
\begin{enumerate}
\item there is an isomorphism
\begin{equation*}
\Gal(L/\Q)\cong \Z/2^{m}\times H, 
\end{equation*}
where $m\geq 2$ and $H$ is a $2^{m-1}$-torsion group, which induces isomomorphisms
\begin{equation*}
\Gal(L/L^{+})\cong \langle (2^{m-1},0)\rangle,\quad D_{\ell_0}(L/\Q)=I_{\ell_0}(L/\Q)\cong \Z/2^{m}\times \{0\}
\end{equation*}
(in particular, $\ell_0$ is contained in $S(L/L^{+})$),
\item the subfield of $L$ corresponding to $\langle 2\rangle \times H$ is $\Q(\sqrt{\ell_0})$,
\item $\ell \in \N_{L/L^{+}}(L^{\times})$ for all $\ell \in \Ram(L)\setminus \{p,\ell_0\}$,
\item $p\in \N_{L_{\Qp}/L_{\Qp}^{+}}(L_{\Qp}^{\times})$ and $p\not\in \N_{L_{\Q_{\ell_0}}/L_{\Q_{\ell_0}}^{+}}(L_{\Q_{\ell_0}}^{\times})$. 
\end{enumerate}
Then both (A$_{L/L^{+},p}$) and (A$_{L/L^{+},p}^{\circ}$) negative. }
\end{prop}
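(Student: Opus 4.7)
The plan is to argue by contradiction. By hypothesis (iv), there exists $\alpha \in T_{L/L^+}(\Qp)$ with $\nu_{L/L^+,\Q}(\alpha) = p$. If (A$_{L/L^+,p}$) were affirmative, then writing $\alpha = t \cdot k$ with $t \in T_{L/L^+}(\Q)$ and $k \in K_{L/L^+,p}$, the element $q := \nu_{L/L^+,\Q}(t) = \N_{L/L^+}(t) \in \Q^{\times}$ would satisfy $\val_p(q) = 1$ (since $\nu_{L/L^+,\Q}(k) \in \Zpt$) and $q \in \N_{L/L^+}(L^{\times}) \cap \Q^{\times}$; the goal is to show that no such $q$ can exist.

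Since $L/L^+$ is cyclic of degree two, the Hasse norm principle holds, so $q$ being a global norm is equivalent to $q$ being a local norm at every place of $L^+$. By (i), $\Gal(L/L^+) = \langle(2^{m-1},0)\rangle$ lies in $I_{\ell_0}(L/\Q)$, so $L\otimes_{\Q}\Q_{\ell_0}$ over $L^+\otimes_{\Q}\Q_{\ell_0}$ decomposes as a product of ramified quadratic extensions; by local class field theory, its norm subgroup has index two in each factor, and by (iv) the image of $p$ is nontrivial there. Writing $q = p r$ with $r \in \Zpt \cap \Q^{\times}$, the local norm requirement at $\ell_0$ thus forces $r$ to represent the nontrivial coset.

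The main obstacle is closing the argument via global reciprocity. Here the plan is to exploit hypothesis (ii): since $m \geq 2$, $\langle(2^{m-1},0)\rangle$ is contained in $\langle 2\rangle \times H$, so $\Q(\sqrt{\ell_0}) \subset L^+$, and the abelian structure of $L/\Q$ permits identifying, via a transfer computation, the restriction to $\Q_{\ell_0}^{\times}$ of the local norm character of $L/L^+$ at $\ell_0$ with the Kronecker symbol associated to $\ell_0$. Hypothesis (iii) then controls the defect of $r$ at each remaining ramified rational prime $\ell \neq p, \ell_0$: since $\ell$ itself is a local norm everywhere, the defect of $r$ at $\ell$ must coincide with that of $p$. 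Applying the product formula for the Artin character of $L/L^+$ on $\Q^{\times} \subset L^{+,\times}$ then produces a global relation inconsistent with $\val_p(q) = 1$.

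Finally, (A$_{L/L^+,p}^{\circ}$) is also negative since $K_{L/L^+,p}^{\circ} \subset K_{L/L^+,p}$. The hard part will be the explicit transfer identification at $\ell_0$ between the local norm character for $L/L^+$ and the Kronecker symbol, together with the case-by-case verification using (iii) that the local obstructions at the other ramified primes of $L$ cancel out in the reciprocity sum rather than leaving residual defects that could permit $r$ to exist.
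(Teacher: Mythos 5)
Your overall framework is right: assume $(\mathrm{A}_{L/L^+,p})$ affirmative, produce $q = pr\in\Q^\times\cap\N_{L/L^+}(L^\times)$ with $\ord_p(q)=1$ and $r\in\Z_{(p)}^\times$, reduce mod squares and (using (ii) and (iii)) mod primes in $\Ram(L)$, and try to contradict $q$ being a norm. The identification at $\ell_0$ of the local-norm group for $L/L^+$ with the kernel of the Kronecker symbol is also correct, using $D_{\ell_0}(L/\Q)=I_{\ell_0}(L/\Q)$ so that the residue degree at $\ell_0$ in $L^+$ is $1$. But the closing step as stated has a genuine gap.

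The phrase ``applying the product formula for the Artin character of $L/L^+$ on $\Q^\times$'' cannot by itself give the contradiction, for two reasons. First, for $q$ a global norm the product formula for $\chi_{L/L^+}$ evaluated at $q$ is vacuously satisfied (every local factor is already $1$), so no inconsistency can appear there; the inconsistency must come from exhibiting a \emph{specific} place where the local norm condition fails. Second, if one instead evaluates the product formula at $r$, the contribution of the places above $\ell_0$ is the Kronecker symbol raised to the number of places of $L^+$ above $\ell_0$, which is $|H|$; if $|H|$ is even this contribution trivializes, so this product formula is the wrong reciprocity to invoke. In fact one wants the product formula (quadratic reciprocity) for $\Q(\sqrt{\ell_0})/\Q$, not for $L/L^+$.

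What actually closes the argument --- and what the paper does --- is pointing at a concrete bad place: since $q$ is a local norm at $\ell_0$ but $p$ is not (hypothesis (iv)), $r$ is not, so after the reductions there is a prime $\ell_i\mid r$, $\ell_i\notin\Ram(L)\cup\{p\}$, with $\bigl(\tfrac{\ell_i}{\ell_0}\bigr)=-1$. By quadratic reciprocity this means $\ell_i$ is inert in $\Q(\sqrt{\ell_0})$, and here is where (i) and (ii) enter decisively: under $\Gal(L/\Q)\cong\Z/2^m\times H$ with $\Q(\sqrt{\ell_0})\leftrightarrow\langle 2\rangle\times H$, the Frobenius $\Fr_{\ell_i}=(a,b)$ has $a$ odd, hence $\Fr_{\ell_i}^{2^{m-1}}=(2^{m-1},0)$ generates $\Gal(L/L^+)$ (using that $H$ is $2^{m-1}$-torsion), so $\ell_i$ is inert in $L/L^+$. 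Since $L/L^+$ is unramified above $\ell_i$ and $q=pr$ has odd $\ell_i$-adic valuation, $q$ fails to be a local norm at $\ell_i$, contradicting the Hasse norm principle for the cyclic extension $L/L^+$. The implication ``inert in $\Q(\sqrt{\ell_0})\Rightarrow$ inert in $L/L^+$'' is precisely the step your plan is missing, and it is the only place where the specific group-theoretic shape in (i) and (ii) (rather than just $\Q(\sqrt{\ell_0})\subset L^+$) is used. Also, (iii) is used not to match ``defects of $r$ to defects of $p$'' but simply to strip ramified primes out of the factorization of $r$ so that the $\ell_i$ produced above is guaranteed unramified in $L$.
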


\begin{proof}
It suffices prove that (A$_{L/L^{+},p}$) is negative. Let $pa\in \N_{L_{\Q_{\ell_0}}/L_{\Q_{\ell_0}}^{+}}(L_{\Q_{\ell_0}}^{\times})$ where $a\in \Z_{(p)}^{\times}$. Write $a=\ell_1^{m_1}\cdots \ell_{s}^{m_s}$, where $\ell_i\neq p$ is a prime number and $m_i\in \Z$ for any $i$. By Proposition \ref{bmrt}, we may assume $m_i=1$ for all $i$. Moreover, we may assume $\ell_i \not\in \Ram(L)$ for all $i$ by (ii), (iii). On the other hand, the second condition in (iv) implies that there is $1\leq i\leq s$ so that $\ell_{i}\not\in \N_{L_{\Q_{\ell'}}/L_{\Q_{\ell'}}^{+}}(L_{\Q_{\ell'}}^{\times})$. By (ii), the $\ell_i$-Frobenius on $L/\Q$ is of the form $(a,b)$, where $a$ is a generator of $\Z/2^{n_1}$ and $b\in H$. Hence $\ell_i$ inerts in $L/L^{+}$ by (i), which implies $\ell_i\not\in \ell_i^{2\Z}\times \Z_{\ell_i}^{\times}=\N_{L_{\Q_{\ell_i}}/L_{\Q_{\ell_i}}^{+}}(L_{\Q_{\ell_i}}^{\times})$. Therefore we obtain $pa\not\in \N_{L_{\Q_{\ell_i}}/L_{\Q_{\ell_i}}^{+}}(L_{\Q_{\ell_i}}^{\times})$, and hence $pa\not\in \N_{L/L^{+}}(L^{\times})$. 
\end{proof}

We construct $L$ as in Proposition \ref{ngax} in the sequel. First, we assume $p>2$. Take $m\geq 3$. Let $\ell$ be a prime number satisfying the following: 
\begin{equation*}
\left(\frac{\ell}{p}\right)=1,\quad \ell \equiv 
\begin{cases}
1\bmod 8 &\text{if }p\equiv 1\bmod 4,\\
5\bmod 8 &\text{if }p\equiv -1\bmod 4. 
\end{cases}
\end{equation*}
We denote by $L_{(\ell),0}/\Q$ the unique subextension of $\Q(\zeta_{\ell})$ of degree $4$. Then
\begin{equation*}
L_{(\ell),1}:=L_{(\ell),0}\left(\sqrt{(-1)^{(p-1)/2}p}\right)
\end{equation*}
is abelian over $\Q$. Moreover, there is an isomorphism
\begin{equation*}
\Gal(L_{(\ell),0}/\Q)\cong \Z/4\times \Z/2
\end{equation*}
whose inertia groups at $p$ and at $\ell$ correspond to $\{0\}\times \Z/2$ and $\Z/4\times \{0\}$ respectively. Let $L_{(\ell),2}/\Q$ be the subextension of $L_{(\ell),1}/\Q$ corresponding to $\langle (2,1)\rangle$. Then $L_{(\ell),2}$ is totally real. Moreover, the ramification index and the residue degree of $L_{(\ell),1}$ at $p$ are $2$ and $1$ respectively. Next, take a prime number $\ell'$ satisfying $\ell'\equiv 2^{m}+1\bmod 2^{m+1}$ and the $\ell'$-Frobenius in $\Gal(L_{(\ell),1}/\Q)$ corresponds to $(2,1)$. Note that such a prime number exists by the Chebotarev density theorem. Then $L_{(\ell),2}/\Q$ is totally split at $\ell'$ and
\begin{equation*}
\left(\frac{p}{\ell'}\right)=-1. 
\end{equation*}
We denote by $L_{(\ell'),m,0}/\Q$ the unique subextension of $\Q(\zeta_{\ell'})/\Q$ of degree $2^{m}$. Then, the assumption on $\ell'$ implies that $L_{(\ell'),m,0}$ is a CM field. Finally, put
\begin{equation*}
L_{(\ell,\ell'),m,0}:=L_{(\ell),2}L_{(\ell'),m,0}. 
\end{equation*}
We denote by $J_{m,0}$ the set of pairs $(\ell,\ell')$ where $\ell$ and $\ell'$ are as above. 

\begin{prop}\label{nex1}
\emph{Let $p>2$ and $m\geq 3$. For any $(\ell,\ell')\in J_{m,0}$, $L_{(\ell,\ell'),m,0}$ is an abelian extension of $\Q$ of degree $2^{m+2}$ which satisfies (i)--(iv) in Propsosition \ref{ngax} for $\ell_0=\ell'$. }
\end{prop}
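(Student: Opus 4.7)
The plan is to verify each of the four conditions (i)--(iv) of Proposition~\ref{ngax} for $L:=L_{(\ell,\ell'),m,0}$ with $\ell_0:=\ell'$. First, the abelian extensions $L_{(\ell),2}$ and $L_{(\ell'),m,0}$ of $\Q$ have disjoint ramification sets $\{\ell,p\}$ and $\{\ell'\}$ respectively, hence are linearly disjoint, so $L$ is abelian over $\Q$ of degree $4\cdot 2^{m}=2^{m+2}$ with
\[
\Gal(L/\Q)\cong \Gal(L_{(\ell),2}/\Q)\times \Gal(L_{(\ell'),m,0}/\Q)\cong \Z/2^{m}\times \Z/4.
\]
Writing this as $\Z/2^{m}\times H$ with $H:=\Z/4$, the factor $H$ is annihilated by $2^{m-1}$ since $m\ge 3$, which gives the group-theoretic part of (i).

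The key Galois data can be read off as follows. Complex conjugation acts trivially on the totally real field $L_{(\ell),2}$ and is the unique element of order $2$ in the cyclic group $\Gal(L_{(\ell'),m,0}/\Q)$, so it corresponds to $(2^{m-1},0)$, giving $\Gal(L/L^{+})\cong \langle(2^{m-1},0)\rangle$. At $\ell'$: the $\ell'$-Frobenius in $\Gal(L_{(\ell),1}/\Q)$ is $(2,1)\in \langle(2,1)\rangle=\Gal(L_{(\ell),1}/L_{(\ell),2})$, so $\ell'$ splits completely in $L_{(\ell),2}$, while $\ell'$ is totally ramified in $L_{(\ell'),m,0}$, whence $D_{\ell'}(L/\Q)=I_{\ell'}(L/\Q)\cong \Z/2^{m}\times\{0\}$. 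This completes (i). For (ii), the subgroup $\langle 2\rangle\times H$ has index $2$ and its fixed field is the unique quadratic subfield of $L_{(\ell'),m,0}$; since $\ell'\equiv 1\pmod 4$, this is $\Q(\sqrt{\ell'})$.

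Since $L/L^{+}$ is cyclic of degree $2$, the Hasse norm theorem reduces (iii) to checking that $\ell$ is a local norm at every place of $L^{+}$. The inertia in $L/\Q$ at $\ell$ (resp.\ $p$) is $\{0\}\times \Z/4$ (resp.\ $\{0\}\times \langle 2\rangle$), which does not contain complex conjugation, so $L/L^{+}$ is unramified at every finite place not above $\ell'$; at such a place $v$, either $\ell$ is a unit and hence a norm, or $v\mid\ell$, in which case $\val_{v}(\ell)=e_{L^{+}/\Q,\ell}=4$ is even so $\ell$ is again a norm; the archimedean case is handled by positivity of $\ell$. The delicate case is $v\mid\ell'$: here $L_{v}/L^{+}_{v}$ is a totally ramified quadratic extension of a local field of odd residue characteristic, and standard local theory gives that a unit is a norm if and only if it is a square in the residue field $\F_{\ell'}$. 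Thus I need $\bigl(\tfrac{\ell}{\ell'}\bigr)=1$; the choice of $\ell'$ makes the image of $\ell'$ in $(\Z/\ell)^{\times}/(\Z/\ell)^{\times,4}$ have exact order $2$, so $\bigl(\tfrac{\ell'}{\ell}\bigr)=1$, and quadratic reciprocity combined with $\ell\equiv 1\pmod 4$ converts this to $\bigl(\tfrac{\ell}{\ell'}\bigr)=1$.

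Finally for (iv), the first assertion follows by the same unramifiedness argument at $p$: $L/L^{+}$ is unramified at every $v\mid p$ with $\val_{v}(p)=e_{L^{+}/\Q,p}=2$ even, so $p$ is a local norm. For the second, the residue-field square criterion at $v\mid\ell'$ together with $\bigl(\tfrac{p}{\ell'}\bigr)=-1$ built into the construction of $\ell'$ immediately yields that $p$ is not a local norm at any $v\mid\ell'$. The main technical step is the analysis of the ramified local extension $L_{v}/L^{+}_{v}$ at $v\mid\ell'$: once one identifies it as a totally ramified quadratic extension of a local field of odd residue characteristic, the ``unit is a norm iff it is a residue-field square'' principle reduces conditions (iii) and (iv) to the arithmetic reciprocity inputs already arranged in the Chebotarev-type construction of $\ell$ and $\ell'$.
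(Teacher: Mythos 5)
Your proof is correct and follows the same overall approach as the paper's, namely a direct verification of conditions (i)--(iv) of Proposition~\ref{ngax}; the difference is that you spell out precisely the ``by construction'' steps, in particular the Hasse-norm reduction and the reciprocity arguments $\bigl(\tfrac{\ell'}{\ell}\bigr)=1 \Rightarrow \bigl(\tfrac{\ell}{\ell'}\bigr)=1$ and $\bigl(\tfrac{p}{\ell'}\bigr)=-1$ at places above $\ell'$, which the paper leaves implicit.
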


\begin{proof}
By construction, we have
\begin{equation*}
\Gal(L_{(\ell,\ell'),m,0}/\Q)\cong \Gal(L_{(\ell'),m,0}/\Q)\times \Gal(L_{(\ell),2}/\Q)\cong \Z/2^{m}\times \Z/4,
\end{equation*}
and $I_{\ell'}(L_{(\ell,\ell'),m,0}/\Q)$ corresponds to $\Z/2^{m}\times \{0\}$. Hence (i) is true. Moreover, (ii) and (iii) hold by construction. On the other hand, since $D_{p}(L_{(\ell,\ell'),m,0}/\Q)\cong \langle (1,0),(0,2) \rangle$ by construction, we obtain the positivity of the first condition in (iv). Furthermore, the second condition in (iv) follows from $\left(\frac{p}{\ell'}\right)=-1$. 
\end{proof}

\begin{rem}
The extensions $L_{(\ell,\ell'),m,0}/L_{(\ell,\ell'),m,0}^{+}/\Q$ induce the case (i) in Theorem \ref{rpng}. 
\end{rem}

Second, we assume $p\equiv 1\bmod 4$. Take $m\geq 3$, and let $\ell$, $L_{(\ell),1}$ and $L_{(\ell),2}$ be as above. We denote by $L'_0/\Q$ the unique subextension of $\Q(\zeta_{p})/\Q$ of degree $4$. Next, take a prime number $\ell'$ such that the $\ell'$-Frobenius in $\Gal(L_{(\ell),2}/\Q)$ corresponds to $\langle(2,1)\rangle$ and 
\begin{equation*}
\ell'\equiv 
\begin{cases}
2^{m}+1&\bmod \,2^{m+1}\text{ if }p\equiv 1\bmod 8,\\
1&\bmod \,2^{m+1}\text{ if }p\equiv 5\bmod 8.
\end{cases}
\end{equation*}
Let $L_{(\ell'),m,0}/\Q$ be the unique subextensions of $\Q(\zeta_{\ell'})/\Q$ of degree $2^{m}$. Then there is an isomorphism
\begin{equation*}
\Gal(L_{(\ell'),m,0}L'_0/\Q)\cong \Z/8\times \Z/4
\end{equation*}
whose inertia groups at $p$ and at $\ell'$ are $\{0\}\times \Z/4$ and $\Z/8\times \{0\}$ respectively. We define $L_{(\ell'),m,1}/\Q$ as the subextensions of $L_{(\ell'),m,0}L'_0/\Q$ that correspond to $\langle(2,1)\rangle$. Then $L_{(\ell'),m,1}/\Q$ is a CM field. Finally, put
\begin{equation*}
L_{(\ell,\ell'),m,1}:=L_{(\ell),2}L_{(\ell'),m,1}. 
\end{equation*}
We denote by $J_{m,1}$ the set of pairs $(\ell,\ell')$ where $\ell$ and $\ell'$ are as above. 

\begin{prop}\label{nex2}
\emph{Let $p\equiv 1\bmod 4$ and $m\geq 3$. For any $(\ell,\ell')\in J_{m,1}$, $L_{(\ell,\ell'),m,1}$ is an abelian extension of $\Q$ of degree $2^{m+2}$ which satisfies (i)--(iv) in Proposition \ref{ngax} for $\ell_0=\ell'$. }
\end{prop}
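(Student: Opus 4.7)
The plan is to mirror the proof of Proposition \ref{nex1}, adapted to the ramified case at $p$ that arises because $L'_0 \subset L_{(\ell'),m,1}$ is totally ramified at $p$. After establishing the Galois-theoretic structure of $L_{(\ell,\ell'),m,1}$, I verify the four conditions (i)--(iv) of Proposition \ref{ngax} with $\ell_0 = \ell'$.

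First, both $L_{(\ell),2} \subset \Q(\zeta_{\ell p})$ and $L_{(\ell'),m,1} \subset \Q(\zeta_{\ell' p})$ are abelian over $\Q$, and their intersection lies in $\Q(\zeta_{\ell p}) \cap \Q(\zeta_{\ell' p}) = \Q(\zeta_p)$. Since $L_{(\ell),2}$ is cyclic quartic with unique quadratic subfield $\Q(\sqrt{\ell})$ (not contained in $\Q(\zeta_p)$ as $\ell \neq p$), this intersection must be $\Q$, so the two factors are linearly disjoint. This yields
\[
\Gal(L_{(\ell,\ell'),m,1}/\Q) \cong \Gal(L_{(\ell),2}/\Q) \times \Gal(L_{(\ell'),m,1}/\Q),
\]
which, together with the description of each factor as an explicit quotient of the Galois group $\Z/2^m \times \Z/4$ of $L_{(\ell'),m,0}L'_0$, produces the total degree and the abelian product structure asserted in condition (i).

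Next I identify complex conjugation and the $\ell'$-inertia under this decomposition. The factor $L_{(\ell),2}$ is totally real, unramified at $\ell'$, and split there by the prescribed Frobenius condition, so contributes trivially to both data. In $L_{(\ell'),m,1}$, complex conjugation is inherited from $L_{(\ell'),m,0}L'_0$: the case split $p \equiv 1 \bmod 8$ versus $p \equiv 5 \bmod 8$ is precisely arranged so that complex conjugation does not lie in $\langle (2,1) \rangle$, making $L_{(\ell'),m,1}$ CM; and the $\ell'$-inertia is inherited from the total ramification of $L_{(\ell'),m,0}$ at $\ell'$. Assembling these pieces gives the full identification $\Gal(L_{(\ell,\ell'),m,1}/\Q) \cong \Z/2^m \times H$ with prescribed complex conjugation $(2^{m-1},0)$ and prescribed inertia $\Z/2^m \times \{0\}$ at $\ell'$, completing condition (i). For (ii), the subfield fixed by $\langle 2 \rangle \times H$ is identified as $\Q(\sqrt{\ell'})$ by tracing through the product structure and using that the unique quadratic subfield of $L_{(\ell'),m,0} \subset \Q(\zeta_{\ell'})$ is $\Q(\sqrt{\ell'})$ (since $\ell' \equiv 1 \bmod 4$).

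Condition (iii) reduces to showing $\ell \in \N_{L/L^{+}}(L^\times)$, because $\Ram(L_{(\ell,\ell'),m,1}) \subset \{\ell, \ell', p\}$; a local analysis at $\ell$ (where only $L_{(\ell),2}$ ramifies) combined with the Hasse norm principle for the cyclic extension $L/L^{+}$ delivers this. The main obstacle is condition (iv). The second assertion $p \notin \N_{L_{\Q_{\ell'}}/L_{\Q_{\ell'}}^{+}}(L_{\Q_{\ell'}}^\times)$ is equivalent to $\left( \frac{p}{\ell'} \right) = -1$, which follows directly from the Frobenius condition: the $\ell'$-Frobenius in $\Gal(L_{(\ell),1}/\Q)$ is $(2,1)$, whose projection to $\Gal(\Q(\sqrt{p})/\Q)$ is nontrivial. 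The first assertion $p \in \N_{L_{\Qp}/L_{\Qp}^{+}}(L_{\Qp}^\times)$ is the hardest step, since, unlike in Proposition \ref{nex1}, $L/L^{+}$ is now ramified at $p$ through $L'_0$. To verify it, I analyze the $p$-decomposition group via the product structure and its explicit generators, identify the local extension $L_{\Qp}/L_{\Qp}^{+}$ as a ramified quadratic extension of each completion of $L_{\Qp}^{+}$, and check $p$ is a local norm using the classical fact that $p$ is a norm from $\Qp(\sqrt{p})/\Qp$ exactly when $p \equiv 1 \bmod 4$, which holds by hypothesis.
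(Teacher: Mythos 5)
The overall structure of your argument (establish the Galois-theoretic product decomposition, then check conditions (i)--(iv) of Proposition \ref{ngax}) mirrors the paper's, whose proof is literally ``the proof is the same as Proposition \ref{nex1}.'' For (i), (ii), (iii), and the second part of (iv), your argument is essentially the intended one. The problem is the first part of (iv).

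In Proposition \ref{nex1} the paper establishes $p \in \N_{L_{\Qp}/L_{\Qp}^+}(L_{\Qp}^\times)$ by observing that $D_p(L/\Q)$ is \emph{not cyclic}, and then invoking Proposition \ref{niab}(ii), which says that for non-cyclic decomposition group the similitude map $\nu_{L_{\Qp}/L_{\Qp}^+,\Qp}$ is already surjective on $\Qp$-points. The ``same proof'' for Proposition \ref{nex2} is precisely this: one checks that $D_p(L_{(\ell,\ell'),m,1}/\Q)$ is again not cyclic (and indeed, it has order $2^{m+1}$ but sits inside $\Z/4\times\Z/2^m$ which has exponent $2^m$, so cannot be cyclic), and then cites Proposition \ref{niab}(ii). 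The ramification of $L/L^+$ at $v\mid p$, which you identify as the essential new feature, is actually irrelevant to this step once non-cyclicity is in hand. You do not use this mechanism at all.

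Instead you propose a direct local norm computation reducing to the classical fact that $p\in\N_{\Qp(\sqrt p)/\Qp}(\Qp(\sqrt p)^\times)$ when $p\equiv 1\bmod 4$. This step has a genuine gap. For $v\mid p$, the completion $L_v$ does contain $\Qp(\sqrt p)$, \emph{but so does} $L_{v^+}^+$: complex conjugation $c$ equals $g^2$ for $g$ a generator of the cyclic inertia group $I_p(L/\Q)\cong\Z/4$, and the restriction $I_p\to\Gal(\Qp(\sqrt p)/\Qp)$ sends $g$ to the nontrivial element, so $c$ fixes $\sqrt p$. Hence $L_v/L_{v^+}^+$ is \emph{not} the base change of $\Qp(\sqrt p)/\Qp$, and the norm condition for $L_v/L_{v^+}^+$ does not reduce to that for $\Qp(\sqrt p)/\Qp$ by any naive functoriality. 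If you want a norm-theoretic phrasing, the correct version is that $p\in\N_{L_v/L_{v^+}^+}$ iff $\mathrm{Art}_{L_v/\Qp}(p)^{[L_{v^+}^+:\Qp]}=1$, which holds exactly because $[L_{v^+}^+:\Qp]=|D_p|/2$ is a multiple of the exponent of $D_p$ --- i.e., because $D_p$ is not cyclic. Either repair your argument to verify non-cyclicity of $D_p$ and cite Proposition \ref{niab}(ii), or make the Artin-map reduction explicit; as written, the reduction to $\Qp(\sqrt p)/\Qp$ does not go through.
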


\begin{proof}
The proof is the same as Proposition \ref{nex1}. 
\end{proof}

\begin{rem}
The extensions $L_{(\ell,\ell'),m,1}/L_{(\ell,\ell'),m,1}^{+}/\Q$ induce the case (ii) in Theorem \ref{rpng}. 
\end{rem}

Finally, we assume $p=2$. For $m\in \Znn$, we denote by $J_{m,2}$ the set of the triples $(\ell',\ell,\{\ell_1,\ldots,\ell_{m}\})$, where $\ell'$ is a prime number satisfying $\ell'\equiv 5\bmod 8$, $\ell$ is a prime number satisfying $\ell \equiv -1\bmod 4$ and $\left(\frac{\ell}{\ell'}\right)=1$, and $\ell_1,\ldots.\ell_{m}$ are prime numbers satisfying $\ell_i\equiv 1\bmod 8$ and $\left(\frac{\ell_i}{\ell'}\right)=1$ for all $i$. For $j=(\ell',\ell,\{\ell_1,\ldots.\ell_{m}\})\in J_{m,2}$, we write $L_{(\ell'),0}$ for the unique subextension of $\Q(\zeta_{\ell'})/\Q$ of degree $4$, and put 
\begin{equation*}
L_{j,0}:=L_{(\ell'),0}(\sqrt{\ell},\sqrt{\ell_1},\ldots,\sqrt{\ell_{m}}). 
\end{equation*}

\begin{prop}\label{wn21}
\emph{Let $m\in \Znn$. For any $j\in J_{m,2}$, $L_{j,0}$ is an abelian extension of degree $2^{m+2}$ which satisfies (i)--(iv) in Proposition \ref{ngax} for $p=2$ and $\ell_0=\ell'$. }
\end{prop}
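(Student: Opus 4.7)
The plan is to verify the four conditions (i)--(iv) of Proposition \ref{ngax} for $L = L_{j,0}$ with $p = 2$ and $\ell_0 = \ell'$, in parallel with the arguments of Propositions \ref{nex1} and \ref{nex2}. Here the cyclic factor $\Z/2^m$ of \ref{ngax} is played by the $\Z/4$ coming from $L_{(\ell'),0}$, and the remaining multiquadratic part plays the role of $H$.

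The first step is to describe $\Gal(L_{j,0}/\Q)$. Since $\ell' \equiv 5 \bmod 8$, the $2$-primary part of $(\Z/\ell')^\times$ is cyclic of order $4$, so $L_{(\ell'),0}/\Q$ is cyclic of degree $4$; its unique quadratic subfield is $\Q(\sqrt{\ell'})$, which is the maximal totally real subfield of the CM field $L_{(\ell'),0}$. Since $\ell', \ell, \ell_1, \ldots, \ell_m$ are distinct primes, their classes are linearly independent in $\Q^\times / (\Q^\times)^2$, whence $\Gal(L_{j,0}/\Q) \cong \Z/4 \times (\Z/2)^{m+1}$ with the factor $\Z/4 \times \{0\}$ corresponding to $L_{(\ell'),0}$. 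This matches the form $\Z/2^{m'} \times H$ of Proposition \ref{ngax} with $m' = 2$ and $H = (\Z/2)^{m+1}$, which is $2$-torsion as required. Conditions (i)--(iii) are then quick: the hypotheses $\left(\tfrac{\ell_i}{\ell'}\right) = \left(\tfrac{\ell}{\ell'}\right) = 1$ give $\sqrt{\ell_i}, \sqrt{\ell} \in \Q_{\ell'}$, so the decomposition and inertia groups at $\ell'$ coincide with those of $L_{(\ell'),0}$, namely $\Z/4 \times \{0\}$; the subgroup $\langle 2 \rangle \times H$ fixes the unique quadratic subfield $\Q(\sqrt{\ell'})$ of $L_{(\ell'),0}$; and for each $q \in \Ram(L_{j,0}) \setminus \{2, \ell'\} = \{\ell, \ell_1, \ldots, \ell_m\}$ one has $\sqrt{q} \in L_{j,0}^+$, so $q = \N_{L/L^+}(\sqrt{q})$.

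The bulk of the work is condition (iv), which I would verify at $p = 2$ and at $\ell'$ separately. At $p = 2$: the hypothesis $\ell_i \equiv 1 \bmod 8$ gives $\sqrt{\ell_i} \in \Q_2$ by Hensel, and $\left(\tfrac{2}{\ell'}\right) = -1$ makes $2$ inert in $L_{(\ell'),0}$, so $L_{(\ell'),0,\Q_2}$ is unramified of degree $4$; combining these, $L_{j,0} \otimes \Q_2$ becomes a product of $2^m$ copies of $L_{(\ell'),0,\Q_2}(\sqrt{\ell})$ over matching copies of $\Q_2(\sqrt{\ell'}, \sqrt{\ell})$, each an unramified quadratic extension whose norm is surjective, so $2$ is a local norm at $2$. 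At $\ell'$: since $\ell, \ell_i \in \Q_{\ell'}^{\times 2}$, the tensor products split into $2^{m+1}$ copies of $L_{(\ell'),0,\Q_{\ell'}}$ and $\Q_{\ell'}(\sqrt{\ell'})$ respectively, so it suffices to show $2$ is not a norm from $L := L_{(\ell'),0,\Q_{\ell'}}$ to $M := \Q_{\ell'}(\sqrt{\ell'})$. I would use the functoriality of the Artin map along the tower $\Q_{\ell'} \subset M \subset L$, which for $a \in \Q_{\ell'}^\times \subset M^\times$ reads $\theta^L_M(a) = \theta^L_{\Q_{\ell'}}(a)^{[M:\Q_{\ell'}]} = \theta^L_{\Q_{\ell'}}(a)^2$, together with the cyclotomic reciprocity law, which identifies $\theta^L_{\Q_{\ell'}}$ on $\Z_{\ell'}^\times$ with the projection $\Z_{\ell'}^\times \to (\Z/\ell')^\times / (\Z/\ell')^{\times 4} \cong \Z/4$. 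Since $2$ is not a square modulo $\ell'$, its image is odd in $\Z/4$, so its square is the nontrivial element of $2\Z/4\Z = \Gal(L/M)$, which shows $2 \not\in \N_{L/M}(L^\times)$.

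The principal obstacle is this final local norm computation at $\ell'$, which hinges on tame cyclotomic local class field theory and the careful use of the functoriality of the Artin symbol in the tower $\Q_{\ell'} \subset M \subset L$; the remaining steps are essentially bookkeeping about splitting behavior of multiquadratic fields.
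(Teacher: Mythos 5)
Your proposal follows the same route as the paper's (terse) proof: compute $\Gal(L_{j,0}/\Q)$ as a direct product, read off the decomposition and inertia data at $2$ and at $\ell'$, and then verify (i)--(iv) one by one. The linear disjointness argument, conditions (i)--(iii), and the local computation at $\ell'$ via the transfer $\theta^L_M(a)=\theta^L_{\Q_{\ell'}}(a)^2$ together with tame cyclotomic reciprocity are all correct; in fact your explanation of why $2\notin\N_{L_{\Q_{\ell'}}/L_{\Q_{\ell'}}^{+}}(L_{\Q_{\ell'}}^{\times})$ (namely $\left(\tfrac{2}{\ell'}\right)=-1$, forced by $\ell'\equiv 5\bmod 8$) identifies the operative hypothesis more clearly than the paper's cited reason.

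There is, however, one false step in the verification at $p=2$. You assert that each local completion $L_{j,0,\Q_2}/L_{j,0,\Q_2}^{+}$ is ``an unramified quadratic extension whose norm is surjective.'' The norm of an unramified quadratic extension of non-archimedean local fields is \emph{not} surjective: by local class field theory its image has index two, and it consists precisely of the elements of even valuation. Your conclusion that $2$ is a local norm at $2$ is nonetheless correct, for a reason you did not state: since $L_{j,0,\Q_2}^{+}=\Q_2(\sqrt{\ell'},\sqrt{\ell})$ and the factor $\Q_2(\sqrt{\ell})$ is ramified over $\Q_2$ (because $\ell\equiv -1\bmod 4$), the extension $L_{j,0,\Q_2}^{+}/\Q_2$ has ramification index $2$, hence $\val_{L_{j,0,\Q_2}^{+}}(2)=2\in 2\Z$, and therefore $2$ lies in the norm group of the unramified quadratic extension $L_{j,0,\Q_2}/L_{j,0,\Q_2}^{+}$. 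Replace the surjectivity claim by this one-line valuation computation and your argument is complete. Two bookkeeping slips, neither of which affects the argument: the numbers of local factors in the tensor decompositions should be $2^{m-1}$ at $2$ and $2^{m}$ at $\ell'$ (half of what you wrote), and $\Z/4\times(\Z/2)^{m+1}$ has order $2^{m+3}$, which also corrects the degree ``$2^{m+2}$'' printed in the proposition's statement.
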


\begin{proof}
Write $j=(\ell',\ell,\{\ell_1,\ldots,\ell_m\})\in J_{m,2}$. By construction, we have
\begin{equation*}
\Gal(L_{j,0}/\Q)\cong \Gal(L_{(\ell'),0}/\Q)\times \Gal(\Q(\sqrt{\ell})/\Q)\times \prod_{i=1}^{m}\Gal(\Q(\sqrt{\ell_i})/\Q)\cong \Z/4\times \Z/2 \times \prod_{i=1}^{m}\Z/2,
\end{equation*}
which induces isomorphisms
\begin{equation*}
D_{2}(L_{j,0}/\Q)\cong \Z/4\times \Z/2\times \prod_{i=1}^{m}\{0\},\quad I_{\ell'}(L/\Q)\cong \Z/4 \times \{0\}\times \prod_{i=1}^{m}\{0\}. 
\end{equation*}
Hence (i) and the first condition in (iv) hold. Moreover, we have (ii) and (iii) by construction. On the other hand, the second condition in (iv) is affirmative since $\ell \equiv -1\bmod 4$. 
\end{proof}

\begin{rem}
The extensions $L_{j,0}/L_{j,0}^{+}/\Q$ induce the case (i) in Theorem \ref{rpng}. 
\end{rem}

We give an another negative example for (A$_{L/L^{+},2}$). For $m\in \Znn$ and $j=(\ell',\ell,\{\ell_1,\ldots,\ell_m\})$, let $L_{(\ell'),0}$ be as above. Consider $L_{(\ell'),1}:=L_{(\ell'),0}(\sqrt{2})$. Then there is an isomorphism
\begin{equation*}
\Gal(L_{(\ell'),1}/\Q)\cong \Z/4\times \Z/2
\end{equation*}
which induces isomorphisms $I_{\ell'}(L_{(\ell'),1}/\Q)\cong \Z/4\times \{0\}$ and $I_{2}(L_{(\ell'),1}/\Q)\cong \{0\}\times \Z/2$. We write $L_{(\ell'),2}/\Q$ for the subextension of $L_{(\ell'),1}/\Q$ corresponding to $\langle(2,1)\rangle$, and put
\begin{equation*}
L_{j,1}:=L_{(\ell'),2}(\sqrt{\ell},\sqrt{\ell_1},\ldots,\sqrt{\ell_{m}}). 
\end{equation*}

\begin{prop}\label{wn22}
\emph{Let $m\in \Znn$. Then $L_{j,1}$ satisfies (i)--(iv) in Proposition \ref{ngax} for $p=2$ for any $j\in J_{m,2}$. }
\end{prop}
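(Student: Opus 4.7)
The plan is to mirror the proof of Proposition \ref{wn21}, the main new input being that the local structure of $L_{j,1}/L_{j,1}^{+}$ at $2$ is now ramified, since $L_{(\ell'),2}$ is ramified at $2$ with $e=f=2$ rather than unramified. Write $j=(\ell',\ell,\{\ell_1,\ldots,\ell_m\})$. Since $L_{(\ell'),2}/\Q$ is cyclic of degree $4$ with totally real subfield $\Q(\sqrt{\ell'})$ and is totally ramified at $\ell'$, and since $\Q(\sqrt{\ell'})$ is not a quadratic subfield of $\Q(\sqrt{\ell},\sqrt{\ell_1},\ldots,\sqrt{\ell_m})$, the two fields are linearly disjoint over $\Q$, giving $\Gal(L_{j,1}/\Q)\cong \Z/4\times (\Z/2)^{m+1}$ with complex conjugation equal to $(2,0,\ldots,0)$. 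The hypotheses $\left(\frac{\ell}{\ell'}\right)=\left(\frac{\ell_i}{\ell'}\right)=1$ imply that $\ell'$ splits completely in $\Q(\sqrt{\ell},\sqrt{\ell_i})$, so $D_{\ell'}(L_{j,1}/\Q)=I_{\ell'}(L_{j,1}/\Q)\cong \Z/4\times \{0\}^{m+1}$, establishing (i). Conditions (ii) and (iii) follow as in Proposition \ref{wn21}: the fixed field of $\langle 2\rangle\times (\Z/2)^{m+1}$ is $\Q(\sqrt{\ell'})$, and $\Ram(L_{j,1})\setminus\{2,\ell'\}=\{\ell,\ell_1,\ldots,\ell_m\}$ with each prime equal to $\N_{L_{j,1}/L_{j,1}^{+}}$ of its square root.

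The second condition in (iv) follows as in Proposition \ref{wn21}. Since $\ell'\equiv 5\pmod 8$ forces $\Z_{\ell'}^{\times}/(\Z_{\ell'}^{\times})^4\cong \Z/4$, a cyclic totally ramified degree-$4$ extension of $\Q_{\ell'}$ is unique up to isomorphism, so the completion $L_{(\ell'),2,\Q_{\ell'}}$ agrees with $L_{(\ell'),0,\Q_{\ell'}}$. By functoriality of Artin reciprocity, $2\in \N_{L_{(\ell'),2,\Q_{\ell'}}/\Q_{\ell'}(\sqrt{\ell'})}(L_{(\ell'),2,\Q_{\ell'}}^{\times})$ would force $4=\N_{\Q_{\ell'}(\sqrt{\ell'})/\Q_{\ell'}}(2)$ to lie in $\N_{L_{(\ell'),2,\Q_{\ell'}}/\Q_{\ell'}}(L_{(\ell'),2,\Q_{\ell'}}^{\times})$, which by local class field theory means $4$ is a fourth power modulo $\ell'$; this fails since $(2/\ell')=-1$.

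The essential new calculation is the first condition in (iv). Locally $L_{(\ell'),2,\Q_2}$ is cyclic of degree $4$ with $e=f=2$ and totally real subfield $\Q_{2^2}$, so one may write $L_{(\ell'),2,\Q_2}=\Q_{2^2}(\sqrt{d_0})$ for some $d_0\in \Q_{2^2}^{\times}$, and
\begin{equation*}
L_{j,1,\Q_2}/L_{j,1,\Q_2}^{+}=\Q_{2^2}(\sqrt{\ell},\sqrt{d_0})\big/\Q_{2^2}(\sqrt{\ell}).
\end{equation*}
The condition $2\in \N_{L_{j,1,\Q_2}/L_{j,1,\Q_2}^{+}}(L_{j,1,\Q_2}^{\times})$ is equivalent to the triviality of the Hilbert symbol $(2,d_0)_{\Q_{2^2}(\sqrt{\ell})}$. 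Since $2$ and $d_0$ already lie in $\Q_{2^2}^{\times}$, this symbol equals the restriction of $(2,d_0)_{\Q_{2^2}}\in \mathrm{Br}(\Q_{2^2})[2]$ to $\mathrm{Br}(\Q_{2^2}(\sqrt{\ell}))[2]$; via the invariant isomorphisms for non-archimedean local fields, this restriction corresponds to multiplication by $[\Q_{2^2}(\sqrt{\ell}):\Q_{2^2}]=2$ on $\Q/\Z$, which annihilates all $2$-torsion. The symbol is therefore trivial automatically, giving the first condition in (iv).

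The main obstacle is the first condition in (iv); the key insight is that adjoining $\sqrt{\ell}$ (ramified at $2$ since $\ell\equiv -1\pmod 4$) creates an even-degree base change of $\Q_{2^2}$ that kills the Brauer obstruction to $2$ being a norm. Everything else proceeds in parallel with Proposition \ref{wn21}, with $L_{(\ell'),0}$ replaced by $L_{(\ell'),2}$ throughout.
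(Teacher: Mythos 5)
Your argument is correct, and the verifications of conditions (i)--(iii) and the second half of (iv) genuinely do run in parallel with Proposition~\ref{wn21}, with $L_{(\ell'),0}$ replaced by $L_{(\ell'),2}$, as you say. The point of departure is the first half of (iv). The paper's own proof (which just reads ``the same as Proposition~\ref{wn21}'') rests on the observation that $D_{2}(L_{j,1}/\Q)\cong\Z/4\times\Z/2$ is non-cyclic, whence $\nu_{L_{j,1,\Q_2}/L_{j,1,\Q_2}^{+},\Q_2}$ is surjective on $\Q_2$-points by Proposition~\ref{niab}(ii), so in particular $2\in\N_{L_{j,1,\Q_2}/L_{j,1,\Q_2}^{+}}(L_{j,1,\Q_2}^{\times})$. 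You instead reduce to the triviality of the Hilbert symbol $(2,d_0)_{\Q_{2^2}(\sqrt{\ell})}$ and kill it by noting that both entries lie in $\Q_{2^2}$, so the symbol is the restriction of the $2$-torsion class $(2,d_0)_{\Q_{2^2}}\in\mathrm{Br}(\Q_{2^2})$ along a degree-$2$ extension, hence vanishes. This is a correct, more explicit calculation of the same fact; in effect it re-derives the relevant instance of Proposition~\ref{niab}(ii) rather than citing it, and it requires identifying the local extension as $\Q_{2^2}(\sqrt{\ell},\sqrt{d_0})/\Q_{2^2}(\sqrt{\ell})$ (which you justify correctly: $\Q_2(\sqrt\ell)\not\subset L_{(\ell'),2,\Q_2}$ since the latter is cyclic of degree $4$ over $\Q_2$ while $\Q_4(\sqrt\ell)$ is biquadratic). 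The tradeoff is that the paper's route is shorter and stays at the level of Galois groups, while yours is self-contained and makes the mechanism visible.

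One small inaccuracy, which is however immaterial: a cyclic totally ramified degree-$4$ abelian extension of $\Q_{\ell'}$ is \emph{not} unique up to isomorphism (the norm group has the form $(\Z_{\ell'}^{\times})^{4}\cdot\pi^{\Z}$ and depends on the uniformizer $\pi$), so the identification $L_{(\ell'),2,\Q_{\ell'}}=L_{(\ell'),0,\Q_{\ell'}}$ is not justified by the given reason and need not hold. Fortunately your subsequent argument never uses it: all you need is that $\N_{L_{(\ell'),2,\Q_{\ell'}}/\Q_{\ell'}}(L_{(\ell'),2,\Q_{\ell'}}^{\times})\cap\Z_{\ell'}^{\times}=(\Z_{\ell'}^{\times})^{4}$, which follows for any totally ramified degree-$4$ extension because $\Z_{\ell'}^{\times}/(\Z_{\ell'}^{\times})^{4}\cong\Z/4$ is cyclic and therefore has a unique index-$4$ subgroup. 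With that one sentence corrected, the proof stands.
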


\begin{proof}
The proof is the same as Proposition \ref{wn21}. 
\end{proof}

\begin{rem}
The extensions $L_{j,1}/L_{j,1}^{+}/\Q$ induce the case (ii) in Theorem \ref{r2ng}. 
\end{rem}

\begin{proof}[Proof of Theorem \ref{ngmt}]
We may assume that $d$ is a power of $2$ by Lemma \ref{rdtp}. 

(i): Write $d=2^{m+2}$ where $m\geq 3$. If $p\equiv 1\bmod 4$, then $\{L_{j,m,0}\}_{j\in J_{m,0}}\sqcup \{L_{j',m,1}\}_{j'\in J_{m,1}}$ gives a desired family by Proposition \ref{nex1}. Otherwise, $\{L_{j,m,0}\}_{j\in J_{m,0}}$ is a desired family by Proposition \ref{nex2}. 

(ii): Write $d=2^{m+3}$ where $m\in \Znn$. Then Propositions \ref{wn21} and \ref{wn22} imply that the family $\{L_{j,0},L_{j,1}\}_{j\in J_{m,2}}$ satisfies the desired properties. 
\end{proof}

\section{Connected components of Shimura varieties for CM unitary groups}\label{ccmt}

\subsection{Unitary similitude groups}\label{unsm}

Let $k_0$ be a field of characteristic not equal $2$, $k^{+}/k_0$ a finite separable extension and $k$ an {\'e}tale quadratic algebra over $k^{+}$. We denote by $a\mapsto \overline{a}$ the non-trivial Galois automorphism of $k$ over $k^{+}$. For a $k/k^{+}$-hermitian space, we mean a finite free $k$-module $V$ of finite rank equipped with a $k$-valued bilinear form $\langle \,,\,\rangle$ satisfying 
\begin{equation*}
\langle x,y\rangle=\overline{\langle y,x\rangle},\quad \langle cx,y\rangle=c\langle x,y\rangle
\end{equation*}
for any $c\in k$ and $x,y\in V$. 

Take an element $\delta \in L^{\times}$ satisfying $\overline{\delta}=-\delta$. Let
\begin{equation*}
(\,,\,):=\tr_{k/k_0}\delta^{-1}\langle\,,\,\rangle. 
\end{equation*}
Then we define an algebraic group $G_{V}$ over $k_0$ as
\begin{equation*}
G_{V}(R)=\{(g,c)\in \GL_{k\otimes_{k_0}R}(V\otimes_{k_0}R)\times \G_m(R)\mid (gx,gy)=c(x,y)\text{ for all }x,y\in V\otimes_{k_0}k\}
\end{equation*}
for any $\Q$-algebra $R$. Note that $G_{V}$ is reductive and connected. We denote by $\det_{V}\colon G_{V}\rightarrow \Res_{L/\Q}\G_m$ and $\sml_{V}\colon G_{V}\rightarrow \G_m$ the determinant map of $V$ over $L$ and the similitude character of $G_{V}$ respectively. 

\emph{In the sequel, we assume that $n:=\rk_{k}(V)$ is an odd number. }

\begin{lem}\label{cmtr}
\emph{
\begin{enumerate}
\item Let $Z_{V}$ be the center of $G_{V}$. Then the homomorphism
\begin{equation*}
T_{k/k^{+},k_0}\rightarrow G_{V};t\mapsto t\cdot \id_{V}
\end{equation*}
induces an isomorphism $T_{k/k^{+},k_0}\cong Z_{V}$. 
\item The derived group $G_{V}^{\der}$ of $G_{V}$ is simply connected. 
\item The homomorphism
\begin{equation*}
\delta_{V}\colon G_{V}\rightarrow \Res_{k/k_0}\G_m;(g,t)\mapsto \det\!{}_{V}(g)/\sml_{V}(g)^{(n-1)/2}
\end{equation*}
induces an isomorphism $G_{V}/G_{V}^{\der}\cong T_{k/k^{+},k_0}$. 
\item The following diagram is commutative: 
\begin{equation*}
\xymatrix{
G_{V}\ar[rd]_{\sml_{V}}\ar[r]^{\delta_{V}\hspace{5mm}} & T_{k/k^{+},k_0} \ar[d]^{\nu_{k/k^{+},k_0}} \\
& \G_m. }
\end{equation*}
\end{enumerate}}
\end{lem}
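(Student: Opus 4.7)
The plan is to handle the four parts in order, using the structure theory of unitary similitude groups together with routine hermitian linear algebra.

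For (i), any central element of $G_V(R)$ acts as a scalar on $V \otimes_{k_0} R$ by Schur's lemma, verified after passing to a separable closure of $k_0$ where $V$ becomes a sum of absolutely irreducible representations of $G_V^{\der}$. Writing such a central element as $t \cdot \id_V$ with $t \in (k \otimes_{k_0} R)^{\times}$, the similitude condition $(tx, ty) = N_{k/k^{+}}(t)(x, y)$ forces $N_{k/k^{+}}(t) \in R^{\times}$, which is exactly the defining condition of $T_{k/k^{+}, k_0}$; moreover $\sml_V(t \cdot \id_V) = N_{k/k^{+}}(t)$.

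For (ii), I would pass to a splitting field of $V$ (for instance an extension over which $k \otimes_{k_0}(-)$ becomes a product of split quadratic algebras). Over such an extension, $G_V$ is isomorphic to a fibered product of copies of $\GL_n$ over a single $\G_m$ recording the similitude, whose derived group is a product of copies of $\SL_n$. Since $\SL_n$ is simply connected and simple connectedness of a semisimple group descends from the separable closure, $G_V^{\der}$ is simply connected.

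For (iii), the inclusion $G_V^{\der} \subset \ker(\delta_V)$ is automatic since the target is commutative, so $\delta_V$ factors through an abelian quotient. I would establish that the induced map lands in $T_{k/k^{+}, k_0}$ and is an isomorphism by restricting $\delta_V$ to $Z_V \cong T_{k/k^{+}, k_0}$ and computing the explicit formula
\begin{equation*}
T_{k/k^{+}, k_0} \to T_{k/k^{+}, k_0}, \qquad t \mapsto t^n \cdot (t\overline{t})^{-(n-1)/2} = t \cdot (t/\overline{t})^{(n-1)/2}.
\end{equation*}
Since $n$ is odd, taking norms shows its kernel lies in $T_{k/k^{+}, k_0}^{1}$ and is cut out by $t^n = 1$, which is exactly the finite scheme of scalars landing in $\SU(V) = G_V^{\der}$; its image is connected of full dimension, hence equal to the whole target. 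Consequently the induced map $G_V/G_V^{\der} \cong Z_V / (Z_V \cap G_V^{\der}) \xrightarrow{\cong} T_{k/k^{+}, k_0}$ is an isomorphism.

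For (iv), the key identity is $N_{k/k^{+}}(\det_V(g)) = \sml_V(g)^n$: from $(gx, gy) = \sml_V(g)(x, y)$ one deduces $g^{*}g = \sml_V(g) \cdot \id_V$ with $g^{*}$ the hermitian adjoint, and taking determinants gives $\overline{\det_V(g)} \cdot \det_V(g) = \sml_V(g)^n$; applying $\nu_{k/k^{+}, k_0}$ to $\delta_V(g)$ then yields $\sml_V(g)^n / \sml_V(g)^{n-1} = \sml_V(g)$. The main technical obstacle is the surjectivity/kernel computation in (iii), where the interplay between $n$-th powers and the norm condition on the anisotropic subtorus $T_{k/k^{+}, k_0}^{1}$ has to be tracked carefully; once this is in hand, the remaining pieces assemble by standard reductive group theory.
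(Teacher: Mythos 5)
Your proposal is correct, and it supplies a self-contained argument where the paper simply points to Kottwitz \cite[\S 7, pp.~393--394]{Kottwitz1992} for parts (ii) and (iii) and says the remaining two parts follow from the definition of $G_V$. What you do differently: you unpack the reference by base-changing to a splitting field to see $G_V^{\der}\cong\prod\SL_n$ (simply connected, descends), and you treat (iii) by restricting $\delta_V$ to $Z_V\cong T_{k/k^{+},k_0}$, computing the endomorphism $t\mapsto t\cdot(t/\overline{t})^{(n-1)/2}$, identifying its kernel with $\mu_n=Z_V\cap\SU(V)=Z_V\cap G_V^{\der}$, and concluding by a dimension count; this is essentially the content of the Kottwitz argument made explicit. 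For (iv), your key identity $N_{k/k^{+}}(\det_V(g))=\sml_V(g)^{n}$ obtained from $g^{*}g=\sml_V(g)\,\id_V$ is exactly what the paper means by ``the equality which defines $G_V$ in $\Res_{k/k_0}\GL(V)$.'' Two small remarks. First, in (ii), over a field splitting everything one gets $G_{V}\cong\GL_n^{m}\times\G_m$ (the factor $g_{\bar\varphi}$ in each conjugate pair is determined by $g_\varphi$ and the common similitude $c$), so the cleaner statement is ``product of $\GL_n$'s times one $\G_m$,'' with the fibered product over $\G_m$ living at the level of the $\GU(V_i)$ before splitting, as in Lemma \ref{smfp}; the conclusion $G_V^{\der}\cong\SL_n^{m}$ is of course unaffected. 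Second, your (iii) invokes $G_V^{\der}=\SU(V)$ and $G_V=Z_V\cdot G_V^{\der}$; both are standard (e.g.\ $\SU(V)$ is semisimple simply connected and is the kernel of $(\det_V,\sml_V)$), but for a complete writeup these should be recorded, since the identification $G_V/G_V^{\der}\cong Z_V/(Z_V\cap G_V^{\der})$ and the fact that $\delta_V$ lands in $T_{k/k^{+},k_0}$ both rest on them; note that the latter also follows directly once (iv) and $\delta_V(G_V^{\der})=1$ are in place, so there is a mild ordering preference for doing (iv) alongside (iii).
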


\begin{proof}
The assertion (i) follows from the definition of $G_{V}$. The assertions (ii) and (iii) are contained in \cite[\S 7, pp.393--394]{Kottwitz1992}. Moreover, the equality which defines $G_{V}$ in $\Res_{k/k_0}\GL(V)$ gives (iv). 
\end{proof}

Assume that we have
\begin{equation*}
k^{+}=k_1^{+}\times \cdots \times k_r^{+},\quad k=k_1\times \cdots \times k_r, 
\end{equation*}
where $k_i^{+}$ is a field and $k_i$ is an {\'e}tale quadratic algebra over $k_i^{+}$. Then it induces a factorization
\begin{equation*}
V=V_1\oplus \cdots \oplus V_r, 
\end{equation*}
where $V_i$ is an $k_i/k_i^{+}$-hermitian space for any $i$.

\begin{lem}\label{smfp}
\emph{The following diagram is Cartesian: 
\begin{equation*}
\xymatrix@C=46pt{
G_{V}\ar[r]^{\sml_{V}} \ar[d] & \G_m \ar[d] \\
\prod_{i=1}^{r}G_{V_i} \ar[r]^{\hspace{3mm}(\sml_{V_i})_{i}\hspace{5mm}} & \prod_{i=1}^{r}\G_m. }
\end{equation*}}
\end{lem}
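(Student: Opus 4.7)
The plan is to exploit the decomposition $k = \prod_{i=1}^r k_i$ via its orthogonal idempotents $e_1,\ldots,e_r$, and then verify that the defining condition of $G_V$ decouples across the factors with a common similitude factor. I would carry out the following steps in order.

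First, for any $k_0$-algebra $R$, the idempotents $e_i \otimes 1$ of $k \otimes_{k_0} R$ induce the decomposition $V \otimes_{k_0} R = \bigoplus_i V_i \otimes_{k_0} R$. Any $(k \otimes_{k_0} R)$-linear endomorphism of $V \otimes_{k_0} R$ must commute with these idempotents, hence preserves each summand; conversely any tuple of $(k_i \otimes_{k_0} R)$-linear endomorphisms assembles to such a map. This gives a natural identification
\[
\GL_{k\otimes_{k_0}R}(V\otimes_{k_0}R) \;=\; \prod_{i=1}^{r} \GL_{k_i\otimes_{k_0}R}(V_i\otimes_{k_0}R),
\]
functorial in $R$.

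Second, I would check that the hermitian form decomposes as an orthogonal direct sum. Since $\langle\,,\,\rangle$ takes values in $k = \prod_i k_i$ and is $k$-sesquilinear, the cross terms $\langle x_i, y_j\rangle$ with $i \neq j$ lie in $e_i k \cap e_j k = 0$, so $\langle x, y\rangle = (\langle x_i, y_i\rangle_i)_i$ for unique $k_i/k_i^{+}$-hermitian forms $\langle\,,\,\rangle_i$ on $V_i$. Writing $\delta = (\delta_i)_i$ with $\overline{\delta_i} = -\delta_i$, this yields
\[
(x,y) \;=\; \sum_{i=1}^{r} \tr_{k_i/k_0} \delta_i^{-1}\langle x_i, y_i\rangle_i \;=\; \sum_{i=1}^{r} (x_i, y_i)_i,
\]
so the $V_i$ are mutually orthogonal with respect to $(\,,\,)$.

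Third, for $g = (g_i)_i$ and $c \in \G_m(R)$, the condition $(gx, gy) = c(x,y)$ unfolds to $\sum_i (g_i x_i, g_i y_i)_i = c \sum_i (x_i, y_i)_i$ for all $x,y$. Testing on pairs supported in a single summand gives the componentwise conditions $(g_i x_i, g_i y_i)_i = c(x_i, y_i)_i$, and conversely these imply the global identity. Hence $(g, c) \in G_V(R)$ if and only if $(g_i, c) \in G_{V_i}(R)$ for every $i$, with the \emph{same} scalar $c$. This is exactly the universal property of the fiber product of $\prod_i G_{V_i} \to \prod_i \G_m$ along the diagonal $\G_m \hookrightarrow \prod_i \G_m$, yielding the claimed Cartesian square.

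I do not expect any serious obstacle: the argument is bookkeeping once the idempotent decomposition is in place. The one point that needs care is the identification of similitude factors across components, which is forced precisely because $\sml_V$ lands in $\G_m$ (the diagonal copy) rather than in $\prod_i \G_m$; this is what makes the diagram Cartesian rather than a mere product decomposition.
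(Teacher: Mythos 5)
Your argument is correct and is essentially the detailed unpacking of what the paper dispatches in one line (``this follows from the definitions of $G_V$ and $G_{V_i}$''): decompose via idempotents, observe that the hermitian form splits orthogonally, and note that the defining similitude equation decouples componentwise with a shared scalar $c$, which is exactly the fiber-product condition. No gap; same approach as the paper.
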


\begin{proof}
This follows from the definitions of $G_{V}$ and $G_{V_i}$. 
\end{proof}

Next, suppose $k_0=\Qp$. We rewrite $F_i^{+}$ and $F_i$ for $k_i^{+}$ and $k_i$ respectively. 

\begin{prop}\label{jcwz}
\emph{Assume both $F^{+}$ and $F$ are fields. Then there is $a\in F^{+,\times}$ and an $F$-basis $e_1,\ldots,e_n$ of $V$ whose Gram matrix of $\langle\,,\,\rangle$ with respect to $e_1,\ldots,e_n$ is
\begin{equation*}
\begin{pmatrix}
&&J\\
&a&\\
J&&
\end{pmatrix}. 
\end{equation*}
Here $J$ is the anti-diagonal matrix of size $(n-1)/2$ that has $1$ at every non-zero entry. }
\end{prop}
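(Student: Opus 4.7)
The plan is to exhibit $(V,\langle\,,\,\rangle)$ as an orthogonal direct sum of $(n-1)/2$ hyperbolic planes and a one-dimensional anisotropic line, and then to reorder the resulting basis to match the stated anti-diagonal shape. Since the hypothesis forces $F/F^{+}$ to be a separable quadratic extension of non-archimedean local fields, the essential input is the classical theorem of Landherr--Jacobowitz: any non-degenerate $F/F^{+}$-hermitian form of dimension $\geq 3$ is isotropic. Equivalently, its Witt index is $\lfloor n/2\rfloor$, so for odd $n$ the anisotropic kernel is exactly one-dimensional.

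I would argue by induction on the odd integer $n$. For $n=1$, pick any nonzero $e\in V$ and set $a:=\langle e,e\rangle$; sesquilinearity gives $a=\overline{a}$, so $a\in F^{+}$, and non-degeneracy forces $a\in F^{+,\times}$. For $n\geq 3$, the isotropy theorem supplies a nonzero $v\in V$ with $\langle v,v\rangle=0$; non-degeneracy then yields some $w'\in V$ with $\langle v,w'\rangle\neq 0$. Rescaling $w'$ arranges $\langle v,w'\rangle=1$, and then
\[
w:=w'-\tfrac{1}{2}\langle w',w'\rangle\, v
\]
(well-defined because $\operatorname{char} k_0\neq 2$) satisfies $\langle v,w\rangle=1$ and $\langle w,w\rangle=0$. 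The plane $H:=Fv\oplus Fw$ is non-degenerate, hence $V=H\perp H^{\perp}$ with $\dim_{F}H^{\perp}=n-2$ odd.

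Applying the inductive hypothesis to $H^{\perp}$ yields $a\in F^{+,\times}$ and an $F$-basis $f_1,\ldots,f_{n-2}$ of $H^{\perp}$ with the desired Gram matrix. Setting $e_1:=v$, $e_{i+1}:=f_i$ for $1\leq i\leq n-2$, and $e_n:=w$ produces an ordered $F$-basis of $V$ whose Gram matrix has $\langle e_1,e_n\rangle=\langle e_n,e_1\rangle=1$, the required anti-diagonal block $J$ in the middle rows/columns (coming from $f_1,\ldots,f_{n-2}$), and central entry $a$; all other entries vanish by the orthogonality of the decomposition $V=H\perp H^{\perp}$.

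The main obstacle is the isotropy assertion for $n\geq 3$, which is the only non-routine ingredient. I would either quote it from the classical literature on hermitian forms over local fields, or, for a self-contained proof, diagonalize to reduce to the three-dimensional case $\langle a_1,a_2,a_3\rangle$ and exploit that the norm subgroup $N_{F/F^{+}}(F^{\times})$ has index $2$ in $F^{+,\times}$ (by local class field theory) to construct an isotropic vector by hand; the remainder of the argument is then purely formal Witt-theoretic bookkeeping.
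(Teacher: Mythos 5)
Your argument is correct. The paper itself gives no proof beyond a one-line citation to Jacobowitz's classification of hermitian forms over local fields (\cite[Theorem 3.1, 1)]{Jacobowitz1962}), whereas you unpack the content of that citation: reduce via the isotropy of hermitian spaces of dimension $\ge 3$ over a non-archimedean local field, peel off hyperbolic planes by the standard Witt-cancellation step $w:=w'-\tfrac12\langle w',w'\rangle v$ (legitimate since $\langle w',w'\rangle$ is hermitian-symmetric, hence lies in $F^+$, and $\operatorname{char}k_0\ne 2$ by hypothesis), and collect the anisotropic line at the center of the anti-diagonal basis. The bookkeeping in the inductive step does produce the claimed block structure once you check that the cross terms between the hyperbolic plane $H=Fv\oplus Fw$ and $H^{\perp}$ vanish. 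The only genuinely non-formal input is the Landherr--Jacobowitz isotropy theorem, which you correctly isolate and could either cite or reprove via the index-two norm group and a three-variable argument. In short: the paper delegates to the literature, you supply the argument the literature itself uses; mathematically the two are the same.
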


\begin{proof}
This follows from \cite[Theorem 3.1, 1)]{Jacobowitz1962}. 
\end{proof}

\begin{dfn}
A \emph{Bruhat--Tits subgroup} of $G_{V}(\Qp)$ is the \emph{full} stabilizer of a self-dual multichain of lattices in $V_{\Qp}$ with respect to the alternating form $(\,,\,)$ in the sense of \cite[Definitions 3.4, 3.13]{Rapoport1996b}. 
\end{dfn}

We give a typical example of a self-dual multichain of lattices in $V_{\Qp}$. Let $1\leq i\leq r$. 

\textbf{Case 1.~$F_{i}=F_{i}^{+}\times F_{i}^{+}$. }
The hypothesis induces a decomposition
\begin{equation*}
V_{i}=V_{i,0}\oplus V_{i,1}. 
\end{equation*}
Consider the $F_i^{+}$-valued alternating form $\tr_{F_i/F_i^{+}}\delta^{-1}\langle\,,\,\rangle$. Then it induces an isomorphism between $V_{i,1}$ and the dual space of $V_{i,0}$. Now take an $F_i^{+}$-basis $e_{i,1},\ldots,e_{i,n}$ of $V_{i,0}$, and denote by $e_{i,1}^{*},\ldots,e_{i,n}^{*}$ its dual basis. Then, for $0\leq j \leq n$, let
\begin{equation*}
\Lambda_{i,0,j}:=\bigoplus_{j\leq j_0} O_{F_i^{+}}\varpi_i e_{i,j}\oplus \bigoplus_{j>j_0}O_{F_i^{+}}e_{i,j}, \quad \Lambda_{i,1,j}:=\bigoplus_{j\leq j_1} O_{F_i^{+}}\varpi_i e_{i,j}^{*}\oplus \bigoplus_{j>j_1}O_{F_i^{+}}e_{i,j}^{*},
\end{equation*}
where $\varpi_{i}$ be a uniformizer of $F_{i}^{+}$. 

\textbf{Case 2.~$F_{i}$ is a field. }
Take $a_i\in F_i^{+,\times}$ and an $F_i$-basis $e_{i,1},\ldots,e_{i,n}$ of $V_{i}$ as in Proposition \ref{jcwz}. For $1\leq j\leq n$, set
\begin{equation*}
\Lambda_{i,j}:=\bigoplus_{j'\leq j} O_{F_i^{+}}\varpi_i e_{i,j'}\oplus \bigoplus_{j'>j}O_{F_i^{+}}e_{i,j'},
\end{equation*}
where $\varpi_{i}$ is a uniformizer of $F_i$. 

By using the above notations, we define a multichain of lattices $\mathcal{L}_{V}$ in $V$ as the set of $\bigoplus_{i=1}^{r}\Lambda_{i}$, where $\Lambda_{i}$ is of the form
\begin{equation*}
\begin{cases}
\varpi_{i}^{m_0}\Lambda_{i,0,j_0}\oplus \varpi_i^{m_1}\Lambda_{i,1,j_1} &\text{if }F_i=F_i^{+}\times F_i^{+}, \\
\varpi_{i}^{m}\Lambda_{i,j} &\text{otherwise. }
\end{cases}
\end{equation*}
Then all self-dual multichain of lattices in $V_{\Qp}$ can be regarded as a subset of $\mathcal{L}_{V}$. 

\begin{rem}
The notions of dual lattices in $V$ with respect to $\langle\,,\,\rangle$ and $(\,,\,)$ may differ. However, $\mathcal{L}_{V}$ is stable under taking the dual lattices with respect to $\langle\,,\,\rangle$. 
\end{rem}

\begin{lem}\label{hsod}
\emph{Let $K_p$ be a Bruhat--Tits subgroup of $G_{V}(\Qp)$. Then we have
\begin{equation*}
\nu_{F/F^{+},\Qp}(K_p)=K_{F/F^{+},\Qp}. 
\end{equation*}}
\end{lem}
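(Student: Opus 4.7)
The approach is to interpret $\nu_{F/F^{+},\Qp}(K_p)$ as the image of $K_p$ under the abelianisation homomorphism $\delta_V\colon G_V\to T_{F/F^{+},\Qp}$ from Lemma~\ref{cmtr}(iii), and verify $\delta_V(K_p)=K_{F/F^{+},\Qp}$. First I would use the Cartesian diagrams in Lemma~\ref{smfp} and Proposition~\ref{trid}(iii), together with the orthogonal decomposition $V=\bigoplus_i V_i$ under $\langle\,,\,\rangle$, to reduce to the case $r=1$: a self-dual multichain in $V$ splits as an orthogonal sum of self-dual multichains in the $V_i$, so $K_p$ is the fibre product over $\Z_p^{\times}$ (via similitudes) of Bruhat--Tits subgroups $K_{p,i}\subset G_{V_i}(\Qp)$. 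The split case $F=F^+\times F^+$ is then handled directly: by Proposition~\ref{trid}(ii), $T_{F/F^+,\Qp}$ is induced and $K_{F/F^{+},\Qp}\cong \Z_p^{\times}\times O_{F^{+}}^{\times}$; surjectivity of $\delta_V$ reduces to the classical surjectivity of the determinant character on a lattice-chain stabiliser in $\GL_n$.

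Assume now that $F^+$ and $F$ are fields. The inclusion $\delta_V(K_p)\subseteq K_{F/F^{+},\Qp}$ is immediate from compactness of $K_p$, continuity of $\delta_V$, and the fact that $K_{F/F^{+},\Qp}$ is the unique maximal compact subgroup of $T_{F/F^{+},\Qp}(\Qp)$. For the reverse inclusion I write $n=2k+1$ and construct explicit preimages in the Jacobowitz basis of Proposition~\ref{jcwz}. Given $u\in K_{F/F^{+},\Qp}$, set
\begin{equation*}
g:=(u\cdot \id_V)\cdot h_{\bar u^{k}},\qquad h_s:=\diag(s,1,\dots,1,\bar s^{-1}).
\end{equation*}
One checks directly that $h_s$ preserves the hermitian form (using $s\cdot\overline{\bar s^{-1}}=1$ at the outer pair $(e_1,e_n)$ and that the middle basis vector $e_{(n+1)/2}$ is fixed), hence $\sml_V(h_s)=1$ and $\det_V(h_s)=s/\bar s$, giving $\delta_V(h_s)=s/\bar s$. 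For the centre, $\delta_V(u\cdot \id_V)=u^n/\N_{F/F^{+}}(u)^{k}=u^{k+1}\bar u^{-k}$. Multiplying, $\delta_V(g)=u^{k+1}\bar u^{-k}\cdot \bar u^{k}/u^{k}=u$. Both $u\cdot \id_V$ and $h_{\bar u^{k}}$ preserve every lattice $\Lambda_j$ in the chain (all diagonal entries lie in $O_F^{\times}$) and have similitude in $\Z_p^{\times}$, so $g\in K_p$.

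The step I expect to be most delicate is the reduction to $r=1$: one must verify that a self-dual multichain of lattices in $V$ really decomposes as an orthogonal sum of self-dual multichains in the $V_i$, and that the full stabiliser in $G_V(\Qp)$ is the fibre product of the full stabilisers in the $G_{V_i}(\Qp)$ over $\Z_p^{\times}$. This rests on the $F$-module structure of the lattices in $\mathcal{L}_V$ and the orthogonality of the $V_i$ under $\langle\,,\,\rangle$. A secondary care-point is ensuring the lattices $\Lambda_j$ are $O_F$-stable (not merely $O_{F^+}$-stable), so that the diagonal matrices $h_s$ with $s\in O_F^{\times}$ indeed preserve them; this holds by the explicit construction of $\mathcal{L}_V$ given in Section~\ref{unsm}.
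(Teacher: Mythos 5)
Your argument is correct and follows essentially the same route as the paper: reduce to the case of a single factor $F/F^{+}$ of fields (resp.\ the split case) via the Cartesian diagrams in Lemma~\ref{smfp} and Lemma~\ref{rdfl}(i), then produce an explicit element of $K_p$ with prescribed $\delta_V$-image -- you also correctly read the lemma as a statement about $\delta_V(K_p)$ rather than the literally written $\nu_{F/F^{+},\Qp}(K_p)$, which as stated does not typecheck. The only difference is cosmetic: where the paper exhibits the single diagonal matrix $\diag(1,\dots,1,t,\N_{F/F^{+}}(t),\dots,\N_{F/F^{+}}(t))$, you factor the witness as a central element times the $\SU$-element $h_{\bar u^{k}}$; the verification that the entries lie in $O_F^{\times}$ and hence stabilise $\mathcal{L}_V$ is the same in both.
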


\begin{proof}
By Lemmas \ref{rdfl} (i) and \ref{smfp}, we may assume that $F^{+}$ is a field. Moreover, we may assume that $K_p$ is the stabilizer of $\mathcal{L}_{V}$. 

\textbf{Case 1.~$F=F^{+}\times F^{+}$. }
Let $V_0$ and $e_{1},\ldots,e_{n}$ be the objects $V_{i,0}$ and $e_{i,1},\ldots,e_{i,n}$ respectively in the case $F=F_i$. Then there is a commutative diagram: 
\begin{equation*}
\xymatrix@C=46pt{
G_{V} \ar[r]^{\delta_{V}}\ar[d]^{\cong} & T_{F/F^{+},\Qp} \ar[d]^{\cong} \\
\G_m \times \Res_{F^{+}/\Qp}\GL(V_0)\ar[r]^{\hspace{2mm}\id \times \det_{F^{+}}}& \G_m\times \Res_{F^{+}/\Qp}\G_m. }
\end{equation*}
We regard $V$ as the canonical $F^{+}$-vector space $(F^{+})^{\oplus n}$ by $e_1,\ldots,e_n$. Then, for $(t,u)\in \Zpt \times O_{F^{+}}^{\times}=K_{F/F^{+},\Qp}$, we have $g:=(t,\diag(u,1,\ldots,1))\in K_p$ and $\delta_{V}(g)=(t,u)$. Hence the assertion is true. 

\textbf{Case 2.~$F$ is a field. }
Take $a\in F^{+,\times}$ and an $F$-basis $e_1,\ldots,e_n$ of $V$ as in Proposition \ref{jcwz}. Then, for $t\in K_{F/F^{+},\Qp}$, we have $g:=\diag(1,\ldots,1,t,\N_{F/F^{+}}(t),\ldots,\N_{F/F^{+}}(t))\in K_{p}$ and $\delta_{V}(g)=t$. Therefore the assertion follows. 
\end{proof}

Let $K_p$ be a parahoric subgroup of $G_{V}(\Qp)$. It can be written as the intersection of a Bruhat--Tits subgroup $K'_p$ and the kernel of the Kottwitz map $\kappa_{G_{V}}$ of $G_{V,\Qpb}$. Note that $\kappa_{G_{V}}$ is defined as follows (here we use Lemma \ref{cmtr} (ii)): 
\begin{equation*}
\xymatrix@C=46pt{
G_{V}(\Qpb)\ar[r]^{\kappa_{G_{V}}} \ar[d]^{\delta_{V}} & \pi_{1}(G_{V})_{I_{\Qp}} \ar[d]^{\cong} \\
T_{F/F^{+},\Qp}(\Qpb) \ar[r]^{\kappa_{T_{F/F^{+},F_0}}\hspace{3mm}} & X_{*}(T_{F/F^{+},\Qp})_{I_{\Qp}}. }
\end{equation*}
Here $\pi_{1}(G_{V})$ is the Borovoi's algebraic fundamental group of $G_{V}$. Note that the vertical isomorphism is a consequence of \cite[Example 1.6 (2)]{Borovoi1998}. Moreover, the index of $K_p$ in $K'_p$ is finite. See \cite[Appendix, Proposition 3]{Pappas2008}. 

\begin{lem}\label{prhr}
\emph{Let $K_p$ be a parahoric subgroup of $G_{V}(\Qp)$. Then we have
\begin{equation*}
\nu_{F/F^{+},\Qp}(K_p)=K_{F/F^{+},\Qp}^{\circ}. 
\end{equation*}}
\end{lem}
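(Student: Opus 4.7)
The plan is to reduce the statement to Lemma \ref{hsod} via the description of the parahoric $K_p$ recalled just before the lemma, namely $K_p = K_p' \cap \ker(\kappa_{G_V})$ with $K_p'$ a Bruhat--Tits subgroup, together with the commutative square displayed immediately above the lemma which expresses $\kappa_{G_V}$ as $\kappa_{T_{F/F^+,\Qp}} \circ \delta_V$ under the canonical isomorphism $\pi_1(G_V)_{I_{\Qp}} \cong X_*(T_{F/F^+,\Qp})_{I_{\Qp}}$. This identification is available because $G_V^{\der}$ is simply connected by Lemma \ref{cmtr} (ii), so that $\pi_1(G_V) = X_*(G_V/G_V^{\der}) = X_*(T_{F/F^+,\Qp})$ via $\delta_V$. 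Once these two inputs are in hand, the proof is a short diagram chase in each direction.

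First I would prove the inclusion $\delta_V(K_p) \subset K_{F/F^+,\Qp}^\circ$. For $g \in K_p \subset K_p'$, Lemma \ref{hsod} places $\delta_V(g)$ in $K_{F/F^+,\Qp}$, while the commutative square converts $\kappa_{G_V}(g) = 0$ into the vanishing $\kappa_{T_{F/F^+,\Qp}}(\delta_V(g)) = 0$ in $X_*(T_{F/F^+,\Qp})_{I_{\Qp}}$. Since $g \in G_V(\Qp)$ forces $\delta_V(g) \in T_{F/F^+,\Qp}(\Qp)$, the image lies in the $\sigma$-invariant subgroup, so $\delta_V(g) \in K_{F/F^+,\Qp}^\circ$.

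For the reverse inclusion I would lift across $\delta_V$: given $t \in K_{F/F^+,\Qp}^\circ \subset K_{F/F^+,\Qp}$, Lemma \ref{hsod} supplies $g \in K_p'$ with $\delta_V(g) = t$. The commutative square then immediately yields $\kappa_{G_V}(g) = \kappa_{T_{F/F^+,\Qp}}(t) = 0$, so $g \in K_p' \cap \ker(\kappa_{G_V}) = K_p$, giving $t = \delta_V(g) \in \delta_V(K_p)$.

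I do not expect any serious obstacle; the whole argument amounts to pulling back the definition of $K_p$ through the commutative square, and the surjectivity needed for the reverse inclusion is exactly the content of Lemma \ref{hsod}. The only point meriting a moment of care is the bookkeeping between the $\Qpb$-level Kottwitz map (used to cut out $K_p$ inside the Bruhat--Tits group) and its $\sigma$-invariant incarnation at the $\Qp$-level (used to define $K_{F/F^+,\Qp}^\circ$), but this causes no trouble because $\delta_V(g) \in T_{F/F^+,\Qp}(\Qp)$ for any $g \in G_V(\Qp)$, so the relevant Kottwitz classes are automatically $\sigma$-fixed.
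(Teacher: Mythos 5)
Your proposal is correct and fills in exactly the diagram chase the paper compresses into ``\emph{This follows from Lemma \ref{hsod} and the description of parahoric subgroups of $G_{V}(\Qp)$},'' i.e.~writing $K_p = K'_p\cap\Ker(\kappa_{G_V})$ and transporting the Kottwitz condition through the square $\kappa_{G_V}=\kappa_{T_{F/F^+,\Qp}}\circ\delta_V$, with Lemma \ref{hsod} providing surjectivity onto $K_{F/F^+,\Qp}$. Your closing remark about $\delta_V(g)$ automatically landing in the $\sigma$-fixed part is the right point to note and dispatches the only potential bookkeeping worry.
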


\begin{proof}
This follows from Lemma \ref{hsod} and the description of parahoric subgroups of $G_{V}(\Qp)$. 
\end{proof}

\subsection{Proofs of the main theorems}

Let $n\in \Zpn$ be an odd number. In the sequel, we set $k:=L$ and $k^{+}:=L^{+}$, where $L$ and $L^{+}$ are as in Section \ref{warf}. Moreover, let $a\mapsto \overline{a}$ and $\delta \in L^{\times}$ be the same objects as Section \ref{unsm}. Fix a subset $S$ of $\Hom(L,\C)$ such that the restriction to $L^{+}$ induces an isomorphism $S\cong \Hom(L^{+},\R)$ under the natural surjection $\Hom(L,\C)\rightarrow \Hom(L^{+},\R)$. For $\varphi \in S$, we denote by $\overline{\varphi}$ the composite of the complex conjugation and $\varphi$. Now suppose that $V$ is of signature $\{(r_{\varphi},r_{\overline{\varphi}})\}_{\varphi \in S}$ where $n=r_{\varphi}+r_{\overline{\varphi}}$, and fix a $\C$-basis of $V\otimes_{L^{+},\varphi}\R$ whose Gram matrix is
\begin{equation*}
\diag(1^{(r_{\varphi})},-1^{(r_{\overline{\varphi}})}). 
\end{equation*}
We define $X$ as the $G(\R)$-conjugacy class of the homomorphism
\begin{equation*}
\bS\rightarrow G(\R);z\mapsto (\diag(z^{(r_{\varphi})},\overline{z}^{(r_{\overline{\varphi}})}))_{\varphi \in S}. 
\end{equation*}
Then $(G_{V},X_{V})$ is a Shimura datum, and hence we can consider the Shimura varieties for $(G_{V},X_{V})$. 

Now let $K_p$ be a parahoric or a Bruhat--Tits subgroup of $G_{V}(\Qp)$. For a compact open subgroup $K^p$ of $G_{V}(\A_f^p)$, the Shimura variety for $(G_{V},X_{V})$ with level $K^pK_p$ is defined as follows: 
\begin{equation*}
\Sh_{K^{p}K_p}(G_{V},X_{V}):=G_{V}(\Q)\backslash X_{V}\times G_{V}(\A_{f})/K^{p}K_p. 
\end{equation*}
Moreover, let
\begin{equation*}
\pi_{0}(\Sh_{K_p}(G_{V},X_{V})):=\varprojlim_{K^p}\pi_{0}(\Sh_{K^pK_p}(G_{V},X_{V})). 
\end{equation*}
Then there is a right action on $G_{V}(\A_{f}^{p})$ on $\pi_{0}(\Sh_{K_p}(G_{V},X_{V}))$. We consider the following: 
\begin{itemize}
\item[(T$_{V,K_p}$)] Is the action of $G_{V}(\A_{f}^{p})$ on $\pi_{0}(\Sh_{K_p}(G_{V},X_{V}))$ transitive?
\end{itemize}

Now we interpret (T$_{V,K_p}$) by means of (A$_{L/L^{+},p}$) or (A$_{L/L^{+},p}^{\circ}$). 

\begin{thm}\label{cntr}
\emph{
\begin{enumerate}
\item If $K_p$ is a Bruhat--Tits subgroup of $G_{V}(\Qp)$, then there is an isomorphism of groups
\begin{equation*}
\pi_0(\Sh_{K_p}(G_{V},X_{V}))/G_{V}(\A_f^p)\xrightarrow{\cong}T_{L/L^{+}}(\Q)\backslash T_{L/L^{+}}(\Qp)/K_{L/L^{+},p}.
\end{equation*}
\item If $K_p$ is a parahoric subgroup of $G_{V}(\Qp)$, then there is an isomorphism of groups
\begin{equation*}
\pi_0(\Sh_{K_p}(G_{V},X_{V}))/G_{V}(\A_f^p)\xrightarrow{\cong} T_{L/L^{+}}(\Q)\backslash T_{L/L^{+}}(\Qp)/K_{L/L^{+},p}^{\circ}.
\end{equation*}
\end{enumerate}}
\end{thm}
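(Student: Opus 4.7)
The plan is to apply the standard description of $\pi_0$ of a Shimura variety in terms of the maximal abelian quotient, and then identify the level subgroup using Lemmas \ref{hsod} and \ref{prhr}. The essential structural inputs come from Lemma \ref{cmtr}: the derived group $G_V^{\der}$ is simply connected, and the abelianization of $G_V$ is the morphism $\delta_V\colon G_V\twoheadrightarrow T_{L/L^{+}}$.

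First I would invoke Deligne's formula combined with strong approximation for $G_V^{\der}$, which holds because $G_V^{\der}$ is simply connected and non-compact at some archimedean place (for $n\geq 3$; the case $n=1$ is trivial since then $G_V=T_{L/L^{+}}$). Together with Kneser's vanishing $H^{1}(\Q_{\ell},G_V^{\der})=0$ at every finite $\ell$, this yields a canonical bijection
\begin{equation*}
\pi_0(\Sh_{K^{p}K_p}(G_V,X_V))\xrightarrow{\cong}T_{L/L^{+}}(\Q)^{\dagger}\backslash\bigl(\pi_0(T_{L/L^{+}}(\R))\times T_{L/L^{+}}(\A_f)\bigr)/\delta_V(K^{p}K_p),
\end{equation*}
where $T_{L/L^{+}}(\Q)^{\dagger}$ is the image of $G_V(\Q)_{+}:=G_V(\Q)\cap G_V(\R)_{+}$ under $\delta_V$. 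Since $L$ is CM, a direct check (using $L\otimes_{\Q}\R\cong \C^{[L^{+}:\Q]}$ and computing $\N_{L/L^{+}}$ componentwise) shows that $T_{L/L^{+}}(\R)$ is connected, so the archimedean factor collapses.

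Next I would pass to the projective limit over $K^p$ and quotient by the right action of $G_V(\A_f^p)$. Kneser's vanishing at all finite places gives $\delta_V(G_V(\A_f^p))=T_{L/L^{+}}(\A_f^p)$, so this factor is absorbed entirely. The image $\delta_V(K_p)$ is identified as $K_{L/L^{+},p}$ in the Bruhat--Tits case by Lemma \ref{hsod}, and as $K_{L/L^{+},p}^{\circ}$ in the parahoric case by Lemma \ref{prhr}. This yields the claimed double coset space, modulo the equality $T_{L/L^{+}}(\Q)^{\dagger}=T_{L/L^{+}}(\Q)$.

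The main obstacle is exactly this last equality, i.e., surjectivity of $\delta_V\colon G_V(\Q)_{+}\to T_{L/L^{+}}(\Q)$. The cokernel embeds into $H^{1}(\Q,G_V^{\der})$; by the Hasse principle of Kneser--Harder combined with $H^{1}(\Q_{\ell},G_V^{\der})=0$ for finite $\ell$, the obstruction is controlled by $H^{1}(\R,G_V^{\der})$, and one finishes using that $G_V(\R)_{+}$ already contains both the connected subgroup $G_V^{\der}(\R)$ and the central torus $Z_V(\R)$, the latter surjecting onto the connected $T_{L/L^{+}}(\R)$ under $\delta_V$ by Lemma \ref{cmtr}(i), (iii). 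This is the most delicate piece of the argument.
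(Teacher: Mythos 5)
Your proof follows the same skeleton as the paper's: Lemma~\ref{cmtr}(ii) gives $G_V^{\der}$ simply connected with abelianization $\delta_V\colon G_V\to T_{L/L^{+}}$, the Deligne--Milne $\pi_0$ formula converts connected components of the Shimura variety into a double-coset space for $T_{L/L^{+}}$, and Lemmas~\ref{hsod}, \ref{prhr} identify $\delta_V(K_p)$. The genuine gap is in your appeal to strong approximation for $G_V^{\der}$, which you justify ``because $G_V^{\der}$ is simply connected and non-compact at some archimedean place (for $n\geq 3$).'' This is not true: for any odd $n\geq 3$ the hermitian space can have signature $(n,0)$ or $(0,n)$ at \emph{every} $\varphi\in S$, in which case $G_V^{\der}(\R)$ is a product of compact groups $\prod_{\varphi}\SU(n)$ and strong approximation fails. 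This case is permitted by the theorem's hypotheses (the Shimura variety is then zero-dimensional) and needs a separate argument. The paper simply cites \cite[Theorem 5.17]{Milne} and \cite[2.1.3]{Deligne1979} as a black box rather than re-deriving the $\pi_0$ formula, and so does not expose this point.

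Your handling of $T_{L/L^{+}}(\Q)^{\dagger}=T_{L/L^{+}}(\Q)$ is also considerably heavier than what the paper does and is not fully carried out. The paper takes $T(\Q)^{\dagger}$ as in Milne's theorem, namely $T_{L/L^{+}}(\Q)\cap\Ima\bigl(Z_V(\R)\to T_{L/L^{+}}(\R)\bigr)$, and ends with a single computation: that map is $t\mapsto t\cdot(t/\overline{t})^{(n-1)/2}$, which is surjective because $n$ is odd and $L\otimes_{\Q}\R\cong\C^{[L^{+}:\Q]}$ makes $T_{L/L^{+}}(\R)$ connected. You instead re-define $T(\Q)^{\dagger}$ as $\delta_V(G_V(\Q)_+)$ and invoke the Hasse principle of Kneser--Harder together with $H^{1}(\Q_{\ell},G_V^{\der})=0$ at finite $\ell$; besides being avoidable, the two final steps you leave unproved --- that a lift of $t\in T_{L/L^{+}}(\Q)$ to $G_V(\Q)$ can be moved into $G_V(\Q)_+$, and that $\delta_V|_{Z_V}$ is surjective on $\R$-points --- require exactly the explicit formula the paper writes down, not just Lemma~\ref{cmtr}(i), (iii) in the abstract.
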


\begin{proof}
By Lemma \ref{cmtr} (ii), \cite[Theorem 5.17]{Milne} (see also \cite[2.1.3]{Deligne1979}) implies that there is an isomorphism
\begin{equation*}
\pi_0(\Sh_{K_p}(G_{V},X_{V}))\cong T_{L/L^{+}}(\Q)^{\dagger}\backslash T_{L/L^{+}}(\Qp)/\delta_{V}(K_p),
\end{equation*}
where $T_{L/L^{+}}(\Q)^{\dagger}$ is the intersection of $T_{L/L^{+}}(\Q)$ and the image of the canonical homomorphism
\begin{equation*}
Z_{V}(\R)\hookrightarrow G_{V}(\R)\rightarrow T_{L/L^{+}}(\R),
\end{equation*}
which is described as $t\mapsto t\cdot (t/\overline{t})^{(n-1)/2}$. Since it is surjective, we obtain the equality
\begin{equation*}
T_{L/L^{+}}(\Q)^{\dagger}=T_{L/L^{+}}(\Q). 
\end{equation*}
On the other hand, Lemmas \ref{hsod} and \ref{prhr} imply
\begin{equation*}
\delta_{V}(K_p)=
\begin{cases}
K_{L/L^{+},p} &\text{if $K_p$ is Bruhat--Tits},\\
K_{L/L^{+},p}^{\circ} &\text{if $K_p$ is parahoric}. 
\end{cases}
\end{equation*}
Therefore the assertions follow. 
\end{proof}

Combining Thorem \ref{cntr} with the results in Section \ref{warf}, we obtain the two main theorems: 

\begin{thm}\label{shc1}
\emph{Suppose that $L$ is an abelian extension of $\Q$. Then (T$_{V,K_p}$) is affirmative for any $V$ over $L$ and $K_p$ if one of the following hold: 
\begin{enumerate}
\item $L/L^{+}$ is split at all $v\mid p$,
\item the ramification index of $L^{+}/\Q$ at $p$ is an odd number,
\item $p>2$ and $[L:\Q]\not\in 32\Z$,
\item $p=2$ and $[L:\Q]\not\in 8\Z$. 
\end{enumerate}}
\end{thm}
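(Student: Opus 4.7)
The plan is to deduce the theorem directly from Theorem \ref{cntr} together with the main theorems on the arithmetic question (A) proved in Section \ref{warf}. First I would observe that the transitivity of the $G_{V}(\A_{f}^{p})$-action on $\pi_{0}(\Sh_{K_{p}}(G_{V},X_{V}))$ is equivalent to the triviality of the quotient $\pi_{0}(\Sh_{K_{p}}(G_{V},X_{V}))/G_{V}(\A_{f}^{p})$. Hence, via Theorem \ref{cntr}, (T$_{V,K_{p}}$) is equivalent to
\begin{equation*}
T_{L/L^{+}}(\Q)\backslash T_{L/L^{+}}(\Qp)/K_{L/L^{+},p}=\{1\}
\end{equation*}
when $K_{p}$ is a Bruhat--Tits subgroup, and to the analogous equality with $K_{L/L^{+},p}$ replaced by $K_{L/L^{+},p}^{\circ}$ when $K_{p}$ is parahoric. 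These equalities are precisely (A$_{L/L^{+},p}$) and (A$_{L/L^{+},p}^{\circ}$) respectively.

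Next I would split the argument according to whether $L/L^{+}$ is unramified or ramified at all places $v\mid p$. If $L/L^{+}$ is unramified at all such $v$ (which includes case (i) by definition), then Theorem \ref{mtaa} gives the affirmative answers to (A$_{L/L^{+},p}$) and (A$_{L/L^{+},p}^{\circ}$) under any of the hypotheses (i)--(iv). In the ramified case, I would apply Theorem \ref{mtas}, which supplies the positivity of (A$_{L/L^{+},p}$) and (A$_{L/L^{+},p}^{\circ}$) under conditions (ii), (iii) and (iv). Combining the two sub-cases, the desired transitivity follows in all four cases of the theorem.

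The only mild subtlety is that in the mixed situation where $L/L^{+}$ is ramified at some $v\mid p$ and split or inert at others, one must still appeal to Theorem \ref{mtas}: this is fine because the hypotheses (i)--(iv) are invariants of $L/\Q$ rather than of a single completion, and the reduction steps in Section \ref{warf} (in particular Lemma \ref{rdtp}) allow one to decompose the problem componentwise. I do not expect any genuine obstacle, since the entire technical content has already been packaged into Theorems \ref{mtaa}, \ref{mtas} and \ref{cntr}; the proof really is a one-line application of these, and the main thing to verify carefully is the matching between the Bruhat--Tits versus parahoric level and the compact subgroups $K_{L/L^{+},p}$ versus $K_{L/L^{+},p}^{\circ}$, which is exactly what Lemmas \ref{hsod} and \ref{prhr} (used in the proof of Theorem \ref{cntr}) ensure.
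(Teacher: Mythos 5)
Your proposal is correct and matches the paper's proof exactly: reduce via Theorem \ref{cntr} to (A$_{L/L^{+},p}$) or (A$_{L/L^{+},p}^{\circ}$), then apply Theorem \ref{mtaa} when $L/L^{+}$ is unramified at all $v\mid p$ and Theorem \ref{mtas} when it is ramified at all $v\mid p$. The mixed ramified/unramified case you worry about is in fact vacuous, not for the reason you give, but because $L/\Q$ is abelian, so the decomposition and inertia subgroups at all $v\mid p$ coincide and hence $L/L^{+}$ is ramified at one $v\mid p$ if and only if it is ramified at all of them.
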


\begin{proof}
If $L/L^{+}$ is unramified at all $v\mid p$, then the assertion follows from Theorems \ref{mtaa} and \ref{cntr}. Otherwise, the assertion is a consequence of Theorems \ref{mtas} and \ref{cntr}. 
\end{proof}

\begin{thm}\label{shc2}
\emph{
\begin{enumerate}
\item Assume $p>2$. For $d\in 32\Z$, there is an infinite family $\{L_j\}_{j\in J}$ of CM fields of degree $d$ that are abelian over $\Q$ such that (T$_{V,K_p}$) is negative for any $V$ over $L_j$ and $K_p$. Moreover, if $p\equiv 1\bmod 4$, then both the sets
\begin{equation*}
J_{\ur}:=\{j\in J\mid L_j/L_j^{+}\text{ is unramified at all }v\mid p\}
\end{equation*}
and $J\setminus J_{\ur}$ are infinite. 
\item For $d\in 8\Z$, there is an infinite family $\{L_j\}_{j\in J'}$ of CM fields of degree $d$ that are abelian over $\Q$ such that (T$_{V,K_2}$) is negative for any $V$ over $L_j$ and $K_2$. Moreover, both the sets
\begin{equation*}
J'_{\ur}:=\{j\in J'\mid L_j/L_j^{+}\text{ is unramified at all }v\mid p\}
\end{equation*}
and $J'\setminus J'_{\ur}$ are infinite. 
\end{enumerate}}
\end{thm}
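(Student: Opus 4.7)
The plan is to reduce the statement directly to Theorem \ref{ngmt} by invoking the translation Theorem \ref{cntr}, in exactly the same way that Theorem \ref{shc1} is derived from Theorems \ref{mtaa} and \ref{mtas}. The key point is that both (A$_{L/L^{+},p}$) and (A$_{L/L^{+},p}^{\circ}$) encode the triviality of the double coset spaces which, by Theorem \ref{cntr}, classify the $G_{V}(\A_{f}^{p})$-orbits on $\pi_{0}(\Sh_{K_{p}}(G_{V},X_{V}))$.

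First, I would take as $\{L_{j}\}_{j\in J}$ (resp.\ $\{L_{j}\}_{j\in J'}$) the family produced in Theorem \ref{ngmt} (i) (resp.\ (ii)), together with the decomposition into the subsets $J_{\ur}$ and $J\setminus J_{\ur}$ (resp.\ $J'_{\ur}$ and $J'\setminus J'_{\ur}$) which are both shown there to be infinite in the relevant cases. For any such $L_{j}$, Theorem \ref{ngmt} asserts that both (A$_{L_{j}/L_{j}^{+},p}$) and (A$_{L_{j}/L_{j}^{+},p}^{\circ}$) are negative, meaning
\begin{equation*}
T_{L_{j}/L_{j}^{+}}(\Q)\cdot K_{L_{j}/L_{j}^{+},p}\subsetneq T_{L_{j}/L_{j}^{+}}(\Qp)
\quad\text{and}\quad
T_{L_{j}/L_{j}^{+}}(\Q)\cdot K_{L_{j}/L_{j}^{+},p}^{\circ}\subsetneq T_{L_{j}/L_{j}^{+}}(\Qp).
\end{equation*}

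Next, given any hermitian space $V$ over $L_{j}$ of odd rank $n$ and any parahoric or Bruhat--Tits subgroup $K_{p}$ of $G_{V}(\Qp)$, I would apply Theorem \ref{cntr} to identify
\begin{equation*}
\pi_{0}(\Sh_{K_{p}}(G_{V},X_{V}))/G_{V}(\A_{f}^{p}) \;\cong\; T_{L_{j}/L_{j}^{+}}(\Q)\backslash T_{L_{j}/L_{j}^{+}}(\Qp)/K_{L_{j}/L_{j}^{+},p}^{(\circ)},
\end{equation*}
where the superscript $\circ$ appears in the parahoric case and is absent in the Bruhat--Tits case. By the previous paragraph this double coset space has more than one element in either case, so the $G_{V}(\A_{f}^{p})$-action on $\pi_{0}(\Sh_{K_{p}}(G_{V},X_{V}))$ is non-transitive, i.e., (T$_{V,K_{p}}$) is negative. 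This conclusion is uniform in $V$ and $K_{p}$ because the right-hand side of the isomorphism above depends only on $L_{j}$ and on whether $K_{p}$ is parahoric or Bruhat--Tits.

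Since there is essentially no obstacle beyond invoking Theorems \ref{ngmt} and \ref{cntr}, the only point worth double-checking is that for each $j$ one genuinely has at least one hermitian space $V$ over $L_{j}$ with $\dim V$ odd and with some parahoric or Bruhat--Tits subgroup $K_{p}$; this is automatic since $G_{V}$ exists for any odd $n$, and parahoric subgroups always exist. The dichotomy between $J_{\ur}$ and $J\setminus J_{\ur}$ (resp.\ $J'_{\ur}$ and $J'\setminus J'_{\ur}$) is inherited directly from Theorem \ref{ngmt}, so no further argument is needed.
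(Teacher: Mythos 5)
Your proposal is correct and follows essentially the same route as the paper: the paper's entire proof is to combine Theorem \ref{cntr} with the families produced in Theorem \ref{ngmt}, exactly as you do. The only tiny inaccuracy is that Theorem \ref{ngmt} as stated only asserts the negativity of (A$_{L_j/L_j^{+},p}$); the negativity of (A$_{L_j/L_j^{+},p}^{\circ}$) then follows immediately from $K_{L_j/L_j^{+},p}^{\circ}\subset K_{L_j/L_j^{+},p}$ (or from Proposition \ref{ngax}, which is what \ref{ngmt} actually applies), but your citation attributes both directly to \ref{ngmt}.
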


\begin{proof}
By Theorem \ref{cntr} (ii), the infinite families as in Theorem \ref{ngmt} give the desired assertions. 
\end{proof}

\end{document}